\numberwithin{equation}{section}
\numberwithin{figure}{section}
\theoremstyle{plain}
\newtheorem{thm}{\protect\theoremname}
  \theoremstyle{remark}
  \newtheorem*{nota}{Notation}
  \newtheorem{rem}[thm]{\protect\remarkname}
  \theoremstyle{definition}
  \newtheorem{defn}[thm]{\protect\definitionname}
  \theoremstyle{plain}
  \newtheorem{prop}[thm]{\protect\propositionname}
  \theoremstyle{plain}
  \newtheorem{lem}[thm]{\protect\lemmaname}
  \providecommand{\definitionname}{Definition}
  \providecommand{\lemmaname}{Lemma}
  \providecommand{\propositionname}{Proposition}
  \providecommand{\remarkname}{Remark}
\providecommand{\theoremname}{Theorem}
\begin{document}

\title[Compactness for linearly perturbed Yamabe problem]{Compactness results for linearly perturbed Yamabe problem on manifolds
with boundary}

\author{Marco Ghimenti}
\address{M. Ghimenti, \newline Dipartimento di Matematica Universit\`a di Pisa
Largo B. Pontecorvo 5, 56126 Pisa, Italy}
\email{marco.ghimenti@unipi.it}

\author{Anna Maria Micheletti}
\address{A. M. Micheletti, \newline Dipartimento di Matematica Universit\`a di Pisa
Largo B. Pontecorvo 5, 56126 Pisa, Italy}
\email{a.micheletti@dma.unipi.it.}

\begin{abstract}
Let $(M,g)$ a compact Riemannian $n$-dimensional manifold. It is
well know that, under certain hypothesis, in the conformal class of
$g$ there are scalar-flat metrics that have $\partial M$ as a constant
mean curvature hypersurface. Also, under certain hypothesis, it is
known that these metrics are a compact set. In this paper we prove
that, both in the case of umbilic and non-umbilic boundary, if we linearly perturb the mean curvature
term $h_{g}$ with a negative smooth function $\alpha,$ the set of
solutions of Yamabe problem is still a compact set.
\end{abstract}

\keywords{Umbilic boundary, non umbilic boundary, Yamabe problem, Compactness}

\subjclass[2000]{35J65, 53C21}
\maketitle

\section{Introduction}

Let $(M,g)$, a smooth, compact Riemannian manifold of dimension $n\ge3$
with boundary. In \cite{Es} Escobar asked it there exists a conformal
metric $\tilde{g}=u^{\frac{4}{n-2}}g$ for which $M$ has zero scalar
curvature and constant boundary mean curvature. 

This problem can be understood as a generalization of the Riemann
mapping theorem and it is equivalent to finding a positive solution
to the following nonlinear boundary value problem

\begin{equation}
\left\{ \begin{array}{cc}
L_{g}u=0 & \text{ in }M\\
B_{g}u+(n-2)u^{\frac{n}{n-2}}=0 & \text{ on }\partial M
\end{array}\right..\label{eq:Pconf}
\end{equation}
Where $L_{g}=\Delta_{g}-\frac{n-2}{4(n-1)}R_{g}$ and $B_{g}=-\frac{\partial}{\partial\nu}-\frac{n-2}{2}h_{g}$
are respectively the conformal Laplacian and the conformal boundary
operator, $R_{g}$ is the scalar curvature of the manifold, $h_{g}$
is the mean curvature of the $\partial M$ and $\nu$ is the outer
normal with respect to $\partial M$ .

The existence of solutions of (\ref{eq:Pconf}) was established by
the works of Escobar \cite{Es}, Marquez \cite{M1}, Almaraz \cite{Al},
Chen \cite{ch}, Mayer and Ndiaye \cite{MN}.

Solutions of (\ref{eq:Pconf}) are the critical points of the functional
quotient
\[
Q(u):=\inf_{u\in H^{1}\smallsetminus0}\frac{\int\limits _{M}\left(|\nabla u|^{2}+\frac{n-2}{4(n-1)}R_{g}u^{2}\right)dv_{g}+\int\limits _{\partial M}\frac{n-2}{2}h_{g}u^{2}d\sigma_{g}}{\left(\int\limits _{\partial M}|u|^{\frac{2(n-1)}{n-2}}d\sigma_{g}\right)^{\frac{n-2}{n-1}}}.
\]

In \cite{Es} Escobar introduced, in analogy of the classical Yamabe
problem
\[
Q(M,\partial M):=\inf\left\{ Q(u)\ :\ u\in H^{1}(M),\ u\neq0\text{ on }\partial M\right\} .
\]
 Concerning the compactness of the full set of positive solutions
of (\ref{eq:Pconf}), the only interesting case occurs when $Q>0$.
Indeed, when $Q<0$ the solution $u$ is unique while when $Q=0$
the solution is unique up to positive multiplicative constants. 

First compactness results have be proven by Felli and Ould Ahmedou
\cite{FA} for any $n\ge3$, in the case of locally conformally flat
manifolds and by Almaraz in \cite{Al} for $n\ge7$, in the case of
manifolds with nonumbilic boundary.

We recall that the boundary of $M$ is respectively called umbilic
(nonumbilic) if the trace-free second fundamental form of $\partial M$
is zero (different from zero) everywhere.

If either $n>8$ and the Weyl tensor of $M$ never vanishes on $\partial M$
or $n=8$ and the Weyl tensor of $\partial M$ never vanishes on $\partial M$,
the compactness is still true for manifolds with umbilic boundary \cite{GM}.

Very recently the compactness was showed for manifold of dimension
$n=3$ \cite{AQW}, $n=4$ \cite{KMW} and -when the boundary is nonumbilic-
$n=5,6$ \cite{KMW}.

An interesting point is the stability problem that is if the compactness
is preserved under small perturbations of the equation (\ref{eq:Pconf}). 

In particular we consider the linear perturbation problem
\begin{equation}
\left\{ \begin{array}{cc}
L_{g}u=0 & \text{ in }M\\
\frac{\partial u}{\partial\nu}+\frac{n-2}{2}h_{g}u+\varepsilon\alpha u=(n-2)u^{\frac{n}{n-2}} & \text{ on }\partial M
\end{array}\right.\label{eq:Prob-2}
\end{equation}
where $\varepsilon$ is a small positive parameter and $\alpha:M\rightarrow\mathbb{R}$
is a smooth function.

We can prove that the sign of the function $\alpha$ on $\partial M$
has an effect on compactness and non compactness of solutions of (\ref{eq:Prob-2}):
in \cite{GMP18} we proved the existence of blowing up solution of
(\ref{eq:Prob-2}) when $\alpha>0$ in the case of $\partial M$ non
umbilic and $n\ge7$ and in \cite{GMP} we proved an analogous result
in the case of $n\ge11$ and the Weyl tensor not vanishing on $\partial M$.

In the following we show that when $\alpha$ is negative everywhere
on $\partial M$ there are no blowing up solutions for $\varepsilon\rightarrow0$,
i.e. compactness holds. This is analogous of what happens when perturbing
the Scalar curvature term in the classical Yamabe problem (see \cite{dru,DH}
and the references therein)

Our main results are
\begin{thm}
\label{thm:main}Let $(M,g)$ a smooth, $n$-dimensional Riemannian
manifold of positive type not conformally equivalent to the standard
ball with regular umbilic boundary $\partial M$. 

Let $\alpha:M\rightarrow\mathbb{R}$ such that $\alpha<0$ on $\partial M$.
Suppose that $n>8$ and that the Weyl tensor $W_{g}$ is not vanishing
on $\partial M$ or suppose that $n=8$ and that the Weyl tensor referred
to the boundary $\bar{W}_{g}$ is not vanishing on $\partial M$.
Then, given $\bar{\varepsilon}>0$ there exists a positive constant
$C$ such that for any $\varepsilon\in(0,\bar{\varepsilon})$ and
for any $u>0$ solution of (\ref{eq:Prob-2}) it holds 
\[
C^{-1}\le u\le C\text{ and }\|u\|_{C^{2,\eta}(M)}\le C
\]
for some $0<\eta<1$. The constant $C$ does not depend on $u,\varepsilon$. 
\end{thm}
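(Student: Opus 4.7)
The plan is a proof by contradiction based on the boundary blow-up analysis developed in \cite{Al,GM}, augmented to absorb the linear perturbation $\varepsilon\alpha u$. Suppose there exist sequences $\varepsilon_k\in(0,\bar{\varepsilon})$ and positive solutions $u_k$ of (\ref{eq:Prob-2}) with $\max_M u_k\to\infty$; after extraction $\varepsilon_k\to\varepsilon_\infty\in[0,\bar{\varepsilon}]$. The aim is to localize a boundary blow-up point $\bar{x}\in\partial M$ and derive a contradiction from the local geometry at $\bar{x}$.

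First I would adapt the notions of isolated and isolated-simple blow-up points to the perturbed problem. Working in Fermi coordinates centered at a local maximum $x_k\to\bar{x}$ with $M_k=u_k(x_k)$, the rescaled functions $v_k(y)=M_k^{-1}u_k(M_k^{-2/(n-2)}y)$ satisfy an equation whose boundary term carries the lower-order perturbation $\varepsilon_k M_k^{-2/(n-2)}\alpha\, v_k$, which is strictly subcritical under the blow-up. Hence $v_k$ converges, on expanding half-balls, to the standard bubble $U$ on $\mathbb{R}^n_+$, and all the structural steps of \cite{Al,GM} (exclusion of interior blow-up, distance estimates between blow-up points, weighted pointwise estimates $u_k(x)\le C d(x,\bar{x})^{-(n-2)/2}M_k^{-1}$) go through once one checks that the linear term is absorbed in the usual integral and maximum-principle arguments uniformly in $\varepsilon\in(0,\bar{\varepsilon})$.

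The heart of the argument is then a Pohozaev identity applied to $u_k$ on a half-ball $B_\delta^+(\bar{x})\cap M$. Following the scheme in \cite{GM}, one obtains, after multiplying by an appropriate power of $M_k$ and passing to the limit $k\to\infty$, an identity schematically of the form
\[
\mathcal{P}_{\mathrm{geo}}(\bar{x}) \;-\; c_n\,\varepsilon_\infty\,\alpha(\bar{x})\,\mathcal{I}_\infty \;+\; o(1)=0,
\]
where $\mathcal{P}_{\mathrm{geo}}(\bar{x})$ is the purely geometric contribution already computed in \cite{GM} and $\mathcal{I}_\infty\ge 0$ is the limit of a weighted boundary integral of $u_k^2$ concentrating at $\bar{x}$. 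The hypotheses on the Weyl tensor ($n>8$ with $W_g(\bar{x})\ne 0$, or $n=8$ with $\bar{W}_g(\bar{x})\ne 0$) give $\mathcal{P}_{\mathrm{geo}}(\bar{x})>0$ via the $|W_g|^2$ (resp.\ $|\bar{W}_g|^2$) coefficient identified in \cite{GM}. Since $\alpha(\bar{x})<0$, the perturbation contribution $-c_n\varepsilon_\infty\alpha(\bar{x})\mathcal{I}_\infty$ is also nonnegative, so the two summands cannot cancel: contradiction. The $L^\infty$ bound $u\le C$ follows; standard elliptic regularity and the trace-boundary Harnack/maximum-principle argument then deliver $C^{2,\eta}$ control and the lower bound $u\ge C^{-1}$.

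The main obstacle I expect is the second step: proving that every boundary blow-up point is isolated-simple uniformly in $\varepsilon\in(0,\bar{\varepsilon})$. This rests on an ODE comparison for the spherical averages $r\mapsto r^{(n-2)/2}\bar{u}_k(r)$, and the perturbation $\varepsilon\alpha u$ enters exactly through the boundary condition that drives the comparison; one must show it behaves as a genuinely lower-order correction, not as a balance-breaking term at small scales. A secondary delicate point, already present in \cite{GM}, is keeping track in the borderline dimension $n=8$ of the replacement of $|W_g(\bar{x})|^2$ by $|\bar{W}_g(\bar{x})|^2$ in the leading coefficient of $\mathcal{P}_{\mathrm{geo}}(\bar{x})$, and verifying that the linear perturbation does not alter this leading term but only contributes the additional sign-compatible summand above.
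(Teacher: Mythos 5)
Your proposal follows the same overall scheme as the paper: contradiction via boundary blow-up, Pohozaev identity on a half-ball, and the observation that the sign of $\alpha$ is compatible with the positive Weyl-tensor term so the two cannot cancel. The one point you flag as the ``main obstacle'' --- showing the linear perturbation stays genuinely lower-order through the isolated and isolated-simple analysis --- is exactly what the paper resolves first, proving via the Pohozaev identity and $\alpha<0$ that necessarily $\varepsilon_i\to 0$ at any isolated simple blow-up (Proposition \ref{prop:eps-i}) and moreover $\varepsilon_i\delta_i\lesssim\delta_i^{3}$ (Lemma \ref{lem:taui}), after which the unperturbed estimates of \cite{Al,GM} apply; so your $\varepsilon_\infty$ is in fact forced to be $0$, and the Pohozaev relation is used as a magnitude comparison ($P(u_i,r)=O(\delta_i^{n-2})$ versus $\hat P(u_i,r)\gtrsim\delta_i^{4}$) rather than a single limiting identity set to zero.
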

\begin{thm}
\label{thm:main-1}Let $(M,g)$ a smooth, $n$-dimensional Riemannian
manifold of positive type with non umbilic boundary $\partial M$,
with $n\ge7$. 

Let $\alpha:M\rightarrow\mathbb{R}$ such that $\alpha<0$ on $\partial M$.
Then, given $\bar{\varepsilon}>0$ there exists a positive constant
$C$ such that for any $\varepsilon\in(0,\bar{\varepsilon})$ and
for any $u>0$ solution of (\ref{eq:Prob-2}) it holds 
\[
C^{-1}\le u\le C\text{ and }\|u\|_{C^{2,\eta}(M)}\le C
\]
for some $0<\eta<1$. The constant $C$ does not depend on $u,\varepsilon$. 
\end{thm}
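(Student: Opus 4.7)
The plan is the standard contradiction/blow-up scheme used for compactness of the boundary Yamabe problem, adapted to the linear perturbation. Suppose the conclusion fails: then there exist $\varepsilon_i\in(0,\bar\varepsilon)$ and positive solutions $u_i$ of (\ref{eq:Prob-2}) with $\max_M u_i\to\infty$. Passing to a subsequence, $\varepsilon_i\to\varepsilon_0\in[0,\bar\varepsilon]$. Interior elliptic regularity applied to the linear equation $L_g u_i=0$ forces every blow-up point to lie on $\partial M$. I would then set up, following Almaraz and Felli--Ould Ahmedou, the notions of isolated and isolated simple boundary blow-up points in Fermi coordinates around local maxima $x_i\to\bar x\in\partial M$, with blow-up scale $M_i:=u_i(x_i)$.

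The next step is the classification of blow-up. Rescaling $v_i(y):=M_i^{-1}u_i(M_i^{-2/(n-2)}y)$ in Fermi coordinates centered at $x_i$, one checks that the perturbation $\varepsilon_i\alpha u_i$ rescales with an extra factor $M_i^{-2/(n-2)}\to 0$, so that $v_i\to U$ in $C^2_{\mathrm{loc}}(\overline{\mathbb{R}^n_+})$, where $U$ is the standard half-space bubble. Since the perturbation is strictly subcritical, the energy-threshold arguments of Almaraz should allow one to show that each blow-up point is isolated simple, that the blow-up set is finite, and that sharp pointwise upper bounds $u_i(x)\le C M_i^{-1}d_g(x,x_i)^{-(n-2)}$ hold on a fixed half-annulus around $x_i$, uniformly in $\varepsilon\in(0,\bar\varepsilon)$.

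The decisive step is the boundary Pohozaev identity on a half-ball $B^+_\rho(x_i)$. Inserting the isolated-simple asymptotics into the boundary contribution produces a strictly positive term at leading order, driven (for $n\ge 7$) by the trace-free second fundamental form $|\pi(\bar x)|^2>0$, through the non-umbilic sign computation of Almaraz. The bulk and boundary integrals on the right-hand side of the identity also contain a new term proportional to $\varepsilon_i\alpha(\bar x)$, coming from the linear boundary perturbation. Because $\alpha(\bar x)<0$, this term has the \emph{same} sign as the non-umbilicity contribution: it does not cancel it, but either reinforces it or is of lower order in $M_i^{-1}$. The two sides of the Pohozaev identity therefore contradict one another to leading order, ruling out blow-up. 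The resulting uniform bound $u\le C$, combined with Harnack inequalities and the positivity of $Q(M,\partial M)$, gives $u\ge C^{-1}$, and standard elliptic regularity yields the $C^{2,\eta}$ estimate.

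The main obstacle is executing the Pohozaev comparison with error bounds uniform in both $i$ and $\varepsilon\in(0,\bar\varepsilon)$: one needs sharp $C^0$ asymptotics for $u_i$ on the half-annular region at the rate $M_i^{-1}|y|^{-(n-2)}$, together with sufficiently precise Fermi-coordinate expansions of $g$ to extract the correct coefficient of the non-umbilic term. A secondary obstacle is checking that the perturbation $\varepsilon\alpha u$ does not upset the inductive selection of isolated simple blow-up points nor the finiteness of the blow-up set; this requires re-examining the Harnack and local energy estimates of Almaraz while tracking the $\varepsilon$-dependence, but the sign hypothesis $\alpha<0$ ensures that all the critical inequalities remain valid throughout.
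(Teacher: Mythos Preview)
Your proposal is correct and follows essentially the same route as the paper: contradiction via blow-up, reduction to isolated simple boundary blow-up points, and a Pohozaev-identity sign comparison in which the leading term is governed by $|\pi(x_0)|^2>0$ while the $\varepsilon_i\alpha$-contribution, thanks to $\alpha<0$, enters with a favorable sign and can be discarded in the key inequality. The paper carries out exactly this scheme (in Section~\ref{sec:almaraz}, mirroring the umbilic case), with two technical points you leave implicit but which are handled there: first one shows $\varepsilon_i\to 0$ via a preliminary Pohozaev argument (the analogue of Proposition~\ref{prop:eps-i}), which is needed to obtain the sharp $M_i u_i(y)\le C|y|^{2-n}$ bound; second, the precise sign computation requires the refined expansion $v_i=U+\delta_i\hat\gamma_{x_i}+O(\delta_i^2(1+|y|)^{4-n})$ with the corrector $\hat\gamma_{x_i}$ solving \eqref{eq:vqdef-1}, rather than just $v_i\to U$.
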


\subsection{Structure of the paper}

We will give the proof of Theorem \ref{thm:main} in full detail in
Section \ref{sec:Main-Proof}, while in Section \ref{sec:almaraz}
we will give only the main ingredients to prove Theorem \ref{thm:main-1}
following the same strategy of Thm \ref{thm:main}. In Section \ref{sec:Pohozaev}
we recall a version of Pohozaev identity for Problem (\ref{eq:Prob-2}).
In Section \ref{sec:Expansion} we choose a suitable metric conform
to the given metric and Section \ref{sec:Isolated-and-simple} collects
the definition of blow up points for a sequence of solutions of (\ref{eq:Prob-2})
as well as the definitions of isolated and isolated simple blow up
points. In Section \ref{sec:Blowup-estimates} a careful analysis
of the profile of the rescaled solution near an isolated simple blow
up point is proven. By this result, in Section \ref{sec:Sign-estimates}
we can give an estimate of the sign of the terms of Pohozaev identity
near an isolated simple blow up point. By this result, and by a splitting
Lemma recalled in Section \ref{sec:A-splitting-lemma}, we prove that
only isolated simple blow up points can occur for a sequence of solution
of (\ref{eq:Prob-2}). Finally in Section \ref{sec:Main-Proof} we
will prove that with the hypothesis of Theroem \ref{thm:main}, also
the case of an isolated simple blow up point is ruled out, and we
prove our main result. This strategy of the proof of these compactness
results was firstly introduced by R. Schoen (see \cite{SZ}) and it
is well established in literature, so in this paper we will provide
only the proofs of the new results, while we will give references
for the other ones. 

\subsection{Notations and preliminary definitions}
\begin{nota}
We will use the indices $1\le i,j,k,m,p,r,s\le n-1$ and $1\le a,b,c,d\le n$.
Moreover we use the Einstein convention on repeated indices. We denote
by $g$ the Riemannian metric, by $R_{abcd}$ the full Riemannian
curvature tensor, by $R_{ab}$ the Ricci tensor and by $R_{g}$ and
$h_{g}$ respectively the scalar curvature of $(M,g)$ and the mean
curvature of $\partial M$; moreover the Weyl tensor of $(M,g)$ will
be denoted by $W_{g}$. The bar over an object (e.g. $\bar{W}_{g}$)
will means the restriction to this object to the metric of $\partial M$.

Finally, on the half space $\mathbb{R}_{+}^{n}=\left\{ y=(y_{1},\dots,y_{n-1},y_{n})\in\mathbb{R}^{n},\!\ y_{n}\ge0\right\} $
we set $B_{r}(y_{0})=\left\{ y\in\mathbb{R}^{n},\!\ |y-y_{0}|\le r\right\} $
and $B_{r}^{+}(y_{0})=B_{r}(y_{0})\cap\left\{ y_{n}>0\right\} $.
When $y_{0}=0$ we will use simply $B_{r}=B_{r}(y_{0})$ and $B_{r}^{+}=B_{r}^{+}(y_{0})$.
On the half ball $B_{r}$ we set $\partial'B_{r}^{+}=B_{r}^{+}\cap\partial\mathbb{R}_{+}^{n}=B_{r}^{+}\cap\left\{ y_{n}=0\right\} $
and $\partial^{+}B_{r}^{+}=\partial B_{r}^{+}\cap\left\{ y_{n}>0\right\} $.
On $\mathbb{R}_{+}^{n}$ we will use the following decomposition of
coordinates: $(y_{1},\dots,y_{n-1},y_{n})=(\bar{y},y_{n})=(z,t)$
where $\bar{y},z\in\mathbb{R}^{n-1}$ and $y_{n},t\ge0$.

Fixed a point $q\in\partial M$, we denote by $\psi_{q}:B_{r}^{+}\rightarrow M$
the Fermi coordinates centered at $q$. We denote by $B_{g}^{+}(q,r)$
the image of $\psi_{q}(B_{r}^{+})$. When no ambiguity is possible,
we will denote $B_{g}^{+}(q,r)$ simply by $B_{r}^{+}$, omitting
the chart $\psi_{q}$.
\end{nota}
We introduce the following notation for integral quantities which
recur often in the paper
\[
I_{m}^{\alpha}:=\int_{0}^{\infty}\frac{s^{\alpha}ds}{\left(1+s^{2}\right)^{m}}.
\]
By direct computation (see \cite[Lemma 9.4]{Al}) it holds
\begin{align}
I_{m}^{\alpha}=\frac{2m}{\alpha+1}I_{m+1}^{\alpha+2} & \text{ for }\alpha+1<2m\label{eq:Iam}\\
I_{m}^{\alpha}=\frac{2m}{2m-\alpha-1}I_{m+1}^{\alpha} & \text{ for }\alpha+1<2m\nonumber \\
I_{m}^{\alpha}=\frac{2m-\alpha-3}{\alpha+1}I_{m}^{\alpha+2} & \text{ for }\alpha+3<2m\nonumber 
\end{align}

We shortly recall here the well known function ${\displaystyle U(y):=\frac{1}{\left[(1+y_{n})^{2}+|\bar{y}|^{2}\right]^{\frac{n-2}{2}}}}$
which is also called the standard bubble and which is the unique solution,
up to translations and rescaling, of the nonlinear critical problem.
 
\begin{equation}
\left\{ \begin{array}{ccc}
-\Delta U=0 &  & \text{on }\mathbb{R}_{+}^{n};\\
\frac{\partial U}{\partial y_{n}}=-(n-2)U^{\frac{n}{n-2}} &  & \text{on \ensuremath{\partial}}\mathbb{R}_{+}^{n}.
\end{array}\right.\label{eq:Udelta}
\end{equation}
We set 
\begin{equation}
j_{l}:=\partial_{l}U=-(n-2)\frac{y_{l}}{\left[(1+y_{n})^{2}+|\bar{y}|^{2}\right]^{\frac{n}{2}}}\label{eq:jl}
\end{equation}
\[
\partial_{k}\partial_{l}U=(n-2)\left\{ \frac{ny_{l}y_{k}}{\left[(1+y_{n})^{2}+|\bar{y}|^{2}\right]^{\frac{n+2}{2}}}-\frac{\delta^{kl}}{\left[(1+y_{n})^{2}+|\bar{y}|^{2}\right]^{\frac{n}{2}}}\right\} 
\]
\begin{equation}
j_{n}:=y^{b}\partial_{b}U+\frac{n-2}{2}U=-\frac{n-2}{2}\frac{|y|^{2}-1}{\left[(1+y_{n})^{2}+|\bar{y}|^{2}\right]^{\frac{n}{2}}}.\label{eq:jn}
\end{equation}
and we recall that $j_{1},\dots,j_{n}$ are a base of the space of
the $H^{1}$ solutions of the linearized problem 
\begin{equation}
\left\{ \begin{array}{ccc}
 & -\Delta\phi=0 & \text{on }\mathbb{R}_{+}^{n},\\
 & \frac{\partial\phi}{\partial t}+nU^{\frac{2}{n-2}}\phi=0 & \text{on \ensuremath{\partial}}\mathbb{R}_{+}^{n},\\
 & \phi\in H^{1}(\mathbb{R}_{+}^{n}).
\end{array}\right.\label{eq:linearizzato}
\end{equation}

\section{A Pohozaev type identity\label{sec:Pohozaev}}

In the following, we will use this version of a local Pohozaev type
identity \cite{Al,GM}
\begin{thm}[Pohozaev Identity]
\label{thm:poho} Let $u$ a $C^{2}$-solution of the following problem
\[
\left\{ \begin{array}{cc}
L_{g}u=0 & \text{ in }B_{r}^{+}\\
\frac{\partial u}{\partial\nu}+\frac{n-2}{2}h_{g}u+\varepsilon\alpha u=(n-2)u^{\frac{n}{n-2}} & \text{ on }\partial'B_{r}^{+}
\end{array}\right.
\]
for $B_{r}^{+}=\psi_{q}^{-1}(B_{g}^{+}(q,r))$ for $q\in\partial M$,
with $\tau=\frac{n}{n-2}-p>0$. Let us define
\[
P(u,r):=\int\limits _{\partial^{+}B_{r}^{+}}\left(\frac{n-2}{2}u\frac{\partial u}{\partial r}-\frac{r}{2}|\nabla u|^{2}+r\left|\frac{\partial u}{\partial r}\right|^{2}\right)d\sigma_{r}+\frac{r(n-2)^{2}}{2(n-1)}\int\limits _{\partial(\partial'B_{r}^{+})}u^{\frac{2(n-1)}{n-2}}d\bar{\sigma}_{g},
\]
and
\begin{multline*}
\hat{P}(u,r):=-\int\limits _{B_{r}^{+}}\left(y^{a}\partial_{a}u+\frac{n-2}{2}u\right)[(L_{g}-\Delta)u]dy+\frac{n-2}{2}\int\limits _{\partial'B_{r}^{+}}\left(\bar{y}^{k}\partial_{k}u+\frac{n-2}{2}u\right)h_{g}ud\bar{y}\\
+\frac{n-2}{2}\varepsilon\int\limits _{\partial'B_{r}^{+}}\left(\bar{y}^{k}\partial_{k}u+\frac{n-2}{2}u\right)\alpha ud\bar{y}.
\end{multline*}
Then $P(u,r)=\hat{P}(u,r)$.

Here $a=1,\dots,n$, $k=1,\dots,n-1$ and $y=(\bar{y},y_{n})$, where
$\bar{y}\in\mathbb{R}^{n-1}$ and $y_{n}\ge0$.
\end{thm}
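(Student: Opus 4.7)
The plan is to apply the conformally invariant Pohozaev multiplier $Xu := y^{a}\partial_{a}u + \tfrac{n-2}{2}u$ (the generator of the scaling symmetry that preserves the flat model problem (\ref{eq:Udelta})) and compute $\int_{B_{r}^{+}}(Xu)\Delta u\,dy$ in two different ways: first via the divergence theorem, extracting the spherical boundary terms of $P(u,r)$ together with a flat-disk contribution that the boundary condition will process; second by replacing $\Delta u$ with $-(L_{g}-\Delta)u$ (valid because $L_{g}u=0$) to produce the interior term of $\hat{P}(u,r)$. Equating the two expressions then gives $P(u,r)=\hat{P}(u,r)$.

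For the divergence step the key pointwise identity is
\[
(Xu)\Delta u = \mathrm{div}\!\left(y^{a}\partial_{a}u\,\nabla u + \tfrac{n-2}{2}u\,\nabla u - \tfrac{1}{2}|\nabla u|^{2}\,y\right),
\]
obtained by combining $y^{a}\partial_{a}u\,\Delta u = \mathrm{div}(y^{a}\partial_{a}u\,\nabla u - \tfrac{1}{2}|\nabla u|^{2}y) + \tfrac{n-2}{2}|\nabla u|^{2}$ with $\tfrac{n-2}{2}u\,\Delta u = \mathrm{div}(\tfrac{n-2}{2}u\,\nabla u) - \tfrac{n-2}{2}|\nabla u|^{2}$; the two interior $|\nabla u|^{2}$ contributions cancel exactly, which is the hallmark of this conformally invariant multiplier. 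Integrating over $B_{r}^{+}$ and applying the divergence theorem converts the left-hand side into the pure boundary integral $\int_{\partial B_{r}^{+}}\bigl[(Xu)\partial_{\nu}u - \tfrac{1}{2}|\nabla u|^{2}\langle y,\nu\rangle\bigr]d\sigma$.

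I would then split this boundary integral into its $\partial^{+}B_{r}^{+}$ and $\partial'B_{r}^{+}$ pieces. On the spherical cap $\nu=y/r$, $\partial_{\nu}u=\partial_{r}u$ and $\langle y,\nu\rangle=r$, which reproduces verbatim the first integral appearing in $P(u,r)$. On the flat disk $\partial'B_{r}^{+}$ the Euclidean outer normal is $-e_{n}$, so $\langle y,\nu\rangle=0$ and $Xu$ restricts to $\bar{y}^{k}\partial_{k}u+\tfrac{n-2}{2}u$; here I substitute the boundary condition in the form $-\partial_{n}u = (n-2)u^{n/(n-2)} - \tfrac{n-2}{2}h_{g}u - \tfrac{n-2}{2}\varepsilon\alpha u$ and move the $h_{g}u$ and $\alpha u$ contributions to the right-hand side, which reproduces the last two terms of $\hat{P}(u,r)$.

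The one step that requires a small algebraic computation is the nonlinear boundary contribution $(n-2)\int_{\partial'B_{r}^{+}}(Xu)u^{n/(n-2)}d\bar{y}$. Writing $u^{n/(n-2)}\partial_{k}u=\tfrac{n-2}{2(n-1)}\partial_{k}(u^{2(n-1)/(n-2)})$ and integrating by parts in $\bar{y}\in\mathbb{R}^{n-1}$ on the flat $(n-1)$-dimensional disk, the bulk term generated by $\mathrm{div}_{\bar{y}}\bar{y}=n-1$ cancels exactly against the $\tfrac{n-2}{2}u\cdot u^{n/(n-2)}=\tfrac{n-2}{2}u^{2(n-1)/(n-2)}$ piece coming from the $\tfrac{n-2}{2}u$ part of $Xu$, and only the codimension-two sphere integral survives, with the precise constant $\tfrac{r(n-2)^{2}}{2(n-1)}$. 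The argument is conceptually routine; the points to watch are the sign convention for the outer normal on $\partial'B_{r}^{+}$, the exact cancellation in the nonlinear boundary integration by parts (which pins down the constant $(n-2)^{2}/(2(n-1))$), and the identification $-\Delta u = (L_{g}-\Delta)u$ on solutions, which yields the interior term of $\hat{P}(u,r)$.
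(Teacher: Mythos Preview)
Your approach is correct and is precisely the standard derivation of this Pohozaev-type identity; the paper does not give its own proof but simply cites \cite{Al,GM}, and the argument in those references is exactly the multiplier computation you outline (apply $Xu=y^{a}\partial_{a}u+\tfrac{n-2}{2}u$, use the divergence identity for $(Xu)\Delta u$, split the boundary into $\partial^{+}B_{r}^{+}$ and $\partial'B_{r}^{+}$, and integrate by parts the nonlinear term on the flat disk to produce the codimension-two integral with constant $\tfrac{r(n-2)^{2}}{2(n-1)}$). One small slip: when you rewrite the boundary condition you insert a spurious factor $\tfrac{n-2}{2}$ in front of $\varepsilon\alpha u$; as stated in the theorem the boundary equation carries $\varepsilon\alpha u$, so the flat-disk contribution you obtain is $\varepsilon\int_{\partial'B_{r}^{+}}(Xu)\alpha u\,d\bar y$ rather than $\tfrac{n-2}{2}$ times this---the mismatch reflects a coefficient inconsistency already present in the paper's statement, not an error in your method.
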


\section{Expansion of the metric\label{sec:Expansion}}

Since the boundary $\partial M$of $M$ is umbilic, given $q\in\partial M$
there exists a conformally related metric $\tilde{g}_{q}=\Lambda_{q}^{\frac{4}{n-2}}g$
such that some geometric quantities at $q$ have a simpler form which
will be summarized in this paragraph. We have 
\[
\Lambda_{q}(q)=1,\ \frac{\partial\Lambda_{q}}{\partial y_{k}}(q)=0\text{ for all }k=1,\dots,n-1.
\]
 Set $\tilde{u}_{q}=\Lambda_{q}^{-1}u$ and problem (\ref{eq:Prob-2})
is equivalent to
\begin{equation}
\left\{ \begin{array}{cc}
L_{\tilde{g}_{q}}\tilde{u}_{q}=0 & \text{ in }M\\
B_{\tilde{g}_{q}}\tilde{u}_{q}+(n-2)\tilde{u}_{q}^{\frac{n}{n-2}}-\varepsilon\left[\Lambda_{q}^{-\frac{2}{n-2}}\alpha\right]\tilde{u}_{q}=0 & \text{ on }\partial M
\end{array}\right..\label{eq:P-conf}
\end{equation}
In the following, in order to simplify notations, we will omit the
\emph{tilda} symbol and we will omit $\psi_{x_{i}}$ whenever is not
needed.
\begin{rem}
\label{rem:confnorm}In Fermi conformal coordinates around $q\in\partial M$,
it holds (see \cite{M1})
\begin{equation}
|\text{det}g_{q}(y)|=1+O(|y|^{n})\label{eq:|g|}
\end{equation}
\begin{eqnarray}
|h_{ij}(y)|=O(|y^{4}|) &  & |h_{g}(y)|=O(|y^{4}|)\label{eq:hij}
\end{eqnarray}
\begin{align}
g_{q}^{ij}(y)= & \delta^{ij}+\frac{1}{3}\bar{R}_{ikjl}y_{k}y_{l}+R_{ninj}y_{n}^{2}\label{eq:gij}\\
 & +\frac{1}{6}\bar{R}_{ikjl,m}y_{k}y_{l}y_{m}+R_{ninj,k}y_{n}^{2}y_{k}+\frac{1}{3}R_{ninj,n}y_{n}^{3}\nonumber \\
 & +\left(\frac{1}{20}\bar{R}_{ikjl,mp}+\frac{1}{15}\bar{R}_{iksl}\bar{R}_{jmsp}\right)y_{k}y_{l}y_{m}y_{p}\nonumber \\
 & +\left(\frac{1}{2}R_{ninj,kl}+\frac{1}{3}\text{Sym}_{ij}(\bar{R}_{iksl}R_{nsnj})\right)y_{n}^{2}y_{k}y_{l}\nonumber \\
 & +\frac{1}{3}R_{ninj,nk}y_{n}^{3}y_{k}+\frac{1}{12}\left(R_{ninj,nn}+8R_{nins}R_{nsnj}\right)y_{n}^{4}+O(|y|^{5})\nonumber 
\end{align}
\begin{equation}
\bar{R}_{g_{q}}(y)=O(|y|^{2})\text{ and }\partial_{ii}^{2}\bar{R}_{g_{q}}=-\frac{1}{6}|\bar{W}|^{2}\label{eq:Rii}
\end{equation}
\begin{equation}
\partial_{tt}^{2}\bar{R}_{g_{q}}=-2R_{ninj}^{2}-2R_{ninj,ij}\label{eq:Rtt}
\end{equation}
\begin{equation}
\bar{R}_{kl}=R_{nn}=R_{nk}=R_{nn,kk}=0\label{eq:Ricci}
\end{equation}
\begin{equation}
R_{nn,nn}=-2R_{nins}^{2}.\label{eq:Rnnnn}
\end{equation}
All the quantities above are calculate in $q\in\partial M$, unless
otherwise specified.
\end{rem}

\section{Isolated and isolated simple blow up points\label{sec:Isolated-and-simple}}

Here we recall the definitions of some type of blow up points, and
we give the basic properties about the behavior of these blow up points
(see \cite{Al,FA,HL,M3}). We will omit the proofs of some well known
results.

Let $\left\{ u_{i}\right\} _{i}$ be a sequence of positive solution
to 
\begin{equation}
\left\{ \begin{array}{cc}
L_{g_{i}}u=0 & \text{ in }M\\
B_{g_{i}}u+(n-2)u^{\frac{n}{n-2}}-\varepsilon_{i}\alpha_{i}u=0 & \text{ on }\partial M
\end{array}\right..\label{eq:Prob-i}
\end{equation}
where $\alpha_{i}=\Lambda_{x_{i}}^{-\frac{2}{n-2}}\alpha\rightarrow\Lambda_{x_{0}}^{-\frac{2}{n-2}}\alpha$,
$x_{i}\rightarrow x_{0}$, $g_{i}\rightarrow g_{0}$ in the $C_{\text{loc}}^{3}$
topology and $0<\varepsilon_{i}<\bar{\varepsilon}$.
\begin{defn}
\label{def:blowup}

1) We say that $x_{0}\in\partial M$ is a blow up point for the sequence
$u_{i}$ of solutions of (\ref{eq:Prob-i}) if there is a sequence
$x_{i}\in\partial M$ of local maxima of $\left.u_{i}\right|_{\partial M}$
such that $x_{i}\rightarrow x_{0}$ and $u_{i}(x_{i})\rightarrow+\infty.$

Shortly we say that $x_{i}\rightarrow x_{0}$ is a blow up point for
$\left\{ u_{i}\right\} _{i}$. 

2) We say that $x_{i}\rightarrow x_{0}$ is an \emph{isolated} blow
up point for $\left\{ u_{i}\right\} _{i}$ if $x_{i}\rightarrow x_{0}$
is a blow up point for $\left\{ u_{i}\right\} _{i}$ and there exist
two constants $\rho,C>0$ such that
\[
u_{i}(x)\le Cd_{\bar{g}}(x,x_{i})^{\frac{2-n}{2}}\text{ for all }x\in\partial M\smallsetminus\left\{ x_{i}\right\} ,\ d_{\bar{g}}(x,x_{i})<\rho.
\]

Given $x_{i}\rightarrow x_{0}$ an isolated blow up point for $\left\{ u_{i}\right\} _{i}$,
and given $\psi_{i}:B_{\rho}^{+}(0)\rightarrow M$ the Fermi coordinates
centered at $x_{i}$, we define the spherical average of $u_{i}$
as
\[
\bar{u}_{i}(r)=\frac{2}{\omega_{n-1}r^{n-1}}\int_{\partial^{+}B_{r}^{+}}u_{i}\circ\psi_{i}d\sigma_{r}
\]
and
\[
w_{i}(r):=r^{\frac{2-n}{2}}\bar{u}_{i}(r)
\]
for $0<r<\rho.$

3) We say that $x_{i}\rightarrow x_{0}$ is an \emph{isolated simple}
blow up point for $\left\{ u_{i}\right\} _{i}$ solutions of (\ref{eq:Prob-i})
if $x_{i}\rightarrow x_{0}$ is an isolated blow up point for $\left\{ u_{i}\right\} _{i}$
and there exists $\rho$ such that $w_{i}$ has exactly one critical
point in the interval $(0,\rho)$.
\end{defn}
Given $x_{i}\rightarrow x_{0}$ a blow up point for $\left\{ u_{i}\right\} _{i}$,
we set
\[
M_{i}:=u_{i}(x_{i})\ \text{ and }\ \delta_{i}:=M_{i}^{\frac{2}{2-n}}.
\]
Obviously $M_{i}\rightarrow+\infty$ and $\delta_{i}\rightarrow0$.

We recall the following results
\begin{prop}
\label{prop:4.1}Let $x_{i}\rightarrow x_{0}$ is an \emph{isolated}
blow up point for $\left\{ u_{i}\right\} _{i}$ and $\rho$ as in
Definition \ref{def:blowup}. We set 
\[
v_{i}(y)=M_{i}^{-1}(u_{i}\circ\psi_{i})(M_{i}^{\frac{2}{2-n}}y),\text{ for }y\in B_{\rho M_{i}^{\frac{n-2}{2}}}^{+}(0).
\]
Then, given $R_{i}\rightarrow\infty$ and $\beta_{i}\rightarrow0$,
up to subsequences, we have
\begin{enumerate}
\item $|v_{i}-U|_{C^{2}\left(B_{R_{i}}^{+}(0)\right)}<\beta_{i}$;
\item ${\displaystyle \lim_{i\rightarrow\infty}\frac{R_{i}}{\log M_{i}}=0}$.
\end{enumerate}
\end{prop}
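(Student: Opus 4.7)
The plan is to follow the standard blow-up analysis scheme: rescale the sequence so that the rescaled functions have value $1$ at the origin, extract a smooth limit via elliptic regularity, and identify the limit as the standard bubble $U$ using a Liouville-type classification. The slowly growing radius $R_i$ in conclusion (2) will be obtained by a diagonal extraction.

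First I would read off the equation satisfied by $v_{i}(y):=M_{i}^{-1}(u_{i}\circ\psi_{i})(\delta_{i}y)$, where $\delta_i=M_i^{2/(2-n)}$, on the expanding half-ball $B^{+}_{\rho\delta_i^{-1}}(0)$. Setting $g_i^{(\delta_i)}(y):=(\psi_i^*g_i)(\delta_i y)$, an elementary scaling of \eqref{eq:Prob-i} shows that $v_i$ solves
\[
\left\{\begin{array}{cc}
L_{g_i^{(\delta_i)}} v_i = 0 & \text{in }B^{+}_{\rho\delta_i^{-1}}(0)\\
B_{g_i^{(\delta_i)}} v_i + (n-2) v_i^{n/(n-2)} - \varepsilon_i \delta_i \alpha_i(\delta_i\cdot) v_i = 0 & \text{on }\partial' B^{+}_{\rho\delta_i^{-1}}(0)
\end{array}\right.
\]
with $v_i(0)=1$ and $0$ a local maximum of $v_i|_{\{y_n=0\}}$. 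The rescaled metric $g_i^{(\delta_i)}$ converges to the Euclidean metric in $C^{3}_{\mathrm{loc}}$ and the coefficients of the lower-order terms (involving $R_{g}$, $h_{g}$ and $\varepsilon_i\alpha_i$) carry an extra factor $\delta_i$ or $\delta_i^{2}$ and therefore vanish in the limit.

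Next I would use the isolated blow-up hypothesis to get a pointwise bound on $v_i$: for $|y|\le \rho\delta_i^{-1}$,
\[
v_i(y) \;=\; M_i^{-1} u_i(\psi_i(\delta_i y)) \;\le\; M_i^{-1} C (\delta_i|y|)^{(2-n)/2} \;=\; C|y|^{(2-n)/2}.
\]
This yields a uniform $L^{\infty}$ bound on $v_i$ on every compact subset of $\overline{\mathbb{R}^{n}_{+}}\setminus\{0\}$; combined with $v_i(0)=1$ being a local maximum and with standard boundary Harnack/Moser estimates, one promotes this to a uniform $L^\infty_{\mathrm{loc}}$ bound on $\overline{\mathbb{R}^{n}_{+}}$. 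Schauder interior and boundary estimates for the elliptic equation with Neumann-type boundary condition then give uniform $C^{2,\eta}_{\mathrm{loc}}$ bounds. Extracting a subsequence (relabelled $v_i$), we obtain $v_i\to v$ in $C^{2}_{\mathrm{loc}}(\overline{\mathbb{R}^{n}_{+}})$, where $v\ge 0$ solves the Euclidean limit problem
\[
-\Delta v = 0 \text{ in }\mathbb{R}^{n}_{+}, \qquad \frac{\partial v}{\partial y_n} = -(n-2) v^{n/(n-2)} \text{ on }\partial\mathbb{R}^{n}_{+},
\]
with $v(0)=1$ and the decay bound $v(y)\le C|y|^{(2-n)/2}$.

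The crucial step, and what I expect to be the main obstacle, is the identification of $v$ with $U$: by the Li--Zhu classification of nonnegative solutions to the critical Neumann problem on $\mathbb{R}^{n}_{+}$, any such $v$ is a standard bubble; the normalization $v(0)=1$ together with $0$ being a maximum on the boundary pins down $v=U$. This proves part (1) for any \emph{given} compact set. For part (2), choose an increasing sequence of compacts $B^{+}_{k}$ and for each $k$ let $i_k$ be the smallest index such that $|v_i-U|_{C^{2}(B^{+}_{k})}<1/k$ and $k\le \sqrt{\log M_i}$ hold for all $i\ge i_k$ (possible since $M_i\to\infty$); setting $R_i:=\max\{k:i_k\le i\}$ produces a sequence with $R_i\to\infty$, $|v_i-U|_{C^{2}(B^{+}_{R_i})}\le\beta_i$, and $R_i/\log M_i\to 0$, as required.
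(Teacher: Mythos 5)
The paper gives no proof of Proposition \ref{prop:4.1}; it is presented as a recalled result with references to \cite{Al,FA,HL,M3}, where exactly the scheme you outline (rescaling, uniform $C^{2,\eta}_{\mathrm{loc}}$ estimates from the isolated blow-up bound together with a boundary Harnack argument, passage to the Euclidean half-space Liouville problem and the Li--Zhu classification, then a diagonal extraction to produce the slowly growing radius $R_i$) is carried out. Your proposal is correct and follows that standard approach; the only cosmetic discrepancy is the order of quantifiers in part (2), where the statement treats $R_i,\beta_i$ as given and passes to a subsequence of $u_i$, while you construct $R_i,\beta_i$ directly -- the two formulations are interchangeable and both appear in the cited literature.
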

\begin{prop}
\label{prop:Lemma 4.4}Let $x_{i}\rightarrow x_{0}$ be an isolated
simple blow-up point for $\left\{ u_{i}\right\} _{i}$ and $\alpha<0$.
Let $\eta$ small. Then there exist $C,\rho>0$ such that 
\[
M_{i}^{\lambda_{i}}|\nabla^{k}u_{i}(\psi_{i}(y))|\le C|y|^{2-k-n+\eta}
\]
for $y\in B_{\rho}^{+}(0)\smallsetminus\left\{ 0\right\} $ and $k=0,1,2$.
Here $\lambda_{i}=\left(\frac{2}{n-2}\right)(n-2-\eta)-1$.
\end{prop}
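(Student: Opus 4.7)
The plan is to establish the $k=0$ pointwise bound by combining the bubble-profile description of Proposition \ref{prop:4.1} in the inner region $|y|\le R_i\delta_i=:r_i$ with a maximum-principle comparison in the outer annular region $r_i\le|y|\le\rho$, and to recover the derivative bounds $k=1,2$ by rescaling and Schauder theory. On the inner region, Proposition \ref{prop:4.1} gives $u_i(\psi_i(y))\le(1+o(1))M_iU(y/\delta_i)$ with $U(z)=[(1+z_n)^2+|\bar z|^2]^{-(n-2)/2}$; the identity $M_i^{-\lambda_i}=M_i^{-1}\delta_i^{-\eta}$ (immediate from $\lambda_i=\tfrac{2(n-2-\eta)}{n-2}-1$ and $\delta_i=M_i^{-2/(n-2)}$) then verifies $u_i\le CM_i^{-\lambda_i}|y|^{2-n+\eta}$ on $|y|\le r_i$.

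On the outer annulus I would first invoke the \emph{rough} decay estimate $u_i(\psi_i(y))\le CM_i^{-1}|y|^{2-n}$ for $r_i\le|y|\le\rho$, a standard consequence of isolated-simple blow-up: past the unique critical point of the scaled average $w_i$, the monotonicity of $w_i$ combined with a dyadic Harnack iteration -- legitimate because the absorbed boundary coefficient $u_i^{2/(n-2)}$ is small on each such annulus -- yields the pointwise bound. The rough bound allows rewriting the boundary nonlinearity as $(n-2)u_i^{n/(n-2)}=c_i(y)u_i$ with $|c_i(y)|\le C\delta_i|y|^{-2}$, reducing the boundary condition to the linear Robin form $\partial u_i/\partial\nu=b_i(y)u_i$ with $b_i(y)=-\tfrac{n-2}{2}h_{g_i}-\varepsilon\alpha_i+c_i(y)$; the hypothesis $\alpha<0$ gives $-\varepsilon\alpha_i>0$, supplying the favorable sign of $b_i$ required for the barrier comparison.

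I would then compare $u_i$ on the annulus with the explicit barrier
\[
\Phi_i(y):=A\,M_i^{-\lambda_i}\bigl[(1+y_n)^2+|\bar y|^2\bigr]^{-(n-2-\eta)/2}.
\]
The interior supersolution inequality $L_{g_i}\Phi_i\le 0$ holds because the direct computation gives $\Delta\Phi_i=-\eta(n-2-\eta)\Phi_i\bigl[(1+y_n)^2+|\bar y|^2\bigr]^{-1}<0$, and the conformal-Fermi expansion of Remark \ref{rem:confnorm} (in particular $R_{g_i}=O(|y|^2)$) controls the remaining scalar-curvature contribution once $\rho$ is small. On the boundary the normal derivative of the shifted bubble produces the positive ratio $\partial\Phi_i/\partial\nu=(n-2-\eta)(1+|\bar y|^2)^{-1}\Phi_i$, so the boundary supersolution inequality reduces to verifying that $b_i(y)$ is dominated by $(n-2-\eta)(1+|\bar y|^2)^{-1}$ on the annulus; this closes uniformly thanks to the sign $\alpha<0$, the smallness $|h_{g_i}|=O(|y|^4)$ from the conformal expansion, and a suitable choice of $\rho$ small and $A$ large depending on $\bar\varepsilon$. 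Matching the inner boundary data at $|y|=r_i$ via the inner-region computation and the outer boundary data at $|y|=\rho$ via the rough bound with the same uniform $A$, the maximum principle yields $u_i\le\Phi_i$ on the annulus, completing the case $k=0$.

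The main obstacle is the verification of the boundary supersolution inequality above: one must carefully balance the leading term $(n-2-\eta)(1+|\bar y|^2)^{-1}$ against the perturbations from $h_{g_i}$, $\varepsilon\alpha_i$, and $c_i$, with the sign hypothesis $\alpha<0$ being exactly what keeps the linearized Robin coefficient in the range where the shifted-bubble barrier succeeds -- a mechanism analogous to the role of the negative perturbation in the classical linearly perturbed Yamabe problem. For the derivative estimates $k=1,2$ I would rescale at each base point $y_0$ with $|y_0|\in[r_i,\rho]$ by setting $\tilde u_i(z):=u_i(\psi_i(y_0+(|y_0|/2)z))$ on the unit half-ball; the $k=0$ estimate makes the rescaled problem have uniformly bounded coefficients, so boundary $C^{2,\eta}$ Schauder estimates give $|\nabla^k\tilde u_i(0)|\le C\|\tilde u_i\|_{L^\infty}$ for $k=1,2$, and undoing the rescaling together with the pointwise $k=0$ bound on $\|\tilde u_i\|_{L^\infty}$ yields the claimed derivative estimates.
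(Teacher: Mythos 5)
There is a genuine circularity in your outer-annulus step. You invoke the pointwise estimate $u_i(\psi_i(y))\le CM_i^{-1}|y|^{2-n}$ as a ``standard consequence of isolated-simple blow-up,'' but that estimate is exactly item~(1) of Proposition \ref{prop:4.3} of the paper, and Proposition \ref{prop:4.3} is proven \emph{from} the present statement (its proof is the Green's-function upgrade and is explicitly noted as following Almaraz, which uses the $\eta$-weakened decay as an input). What the isolated-simple hypothesis and the monotonicity of $w_i$ past its unique critical point actually give, via Harnack, is only $u_i(\psi_i(y))\le CR_i^{-(n-2)/2}|y|^{-(n-2)/2}$ on the annulus, where $w_i(r)\le w_i(r_i)\approx M_i^{-1}r_i^{-(n-2)/2}$: this yields $u_i^{2/(n-2)}\le CR_i^{-1}|y|^{-1}$, not the bound $c_i(y)=O(\delta_i|y|^{-2})$ you claimed. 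The correct bound is still small enough (since $R_i\to\infty$) to absorb the boundary nonlinearity and run a comparison, but you have used, without justification, the much sharper and as-yet-unproved inequality whose establishment depends on the very proposition you are proving. The definition of isolated blow-up gives $u_i\le C|y|^{(2-n)/2}$ outright; the refinement to $R_i^{-(n-2)/2}|y|^{-(n-2)/2}$ requires the $w_i$-monotonicity argument; neither produces $M_i^{-1}|y|^{2-n}$.

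A secondary gap concerns the outer boundary $|y|=\rho$: the single-term barrier $\Phi_i(y)=AM_i^{-\lambda_i}[(1+y_n)^2+|\bar y|^2]^{-(n-2-\eta)/2}$ satisfies $\Phi_i\sim AM_i^{-\lambda_i}\rho^{2-n+\eta}\to0$ there, so without a second barrier term it need not dominate $u_i$ on the outer sphere. The standard proofs (Han--Li, Felli--Ould Ahmedou, Almaraz Lemma~4.4, to which the paper defers) add a term of the form $B\,\bar u_i(\rho)\,\rho^{\eta}|y|^{-\eta}$, harmonic and with favorable Robin sign, precisely to fix this matching; the contribution of this extra term is then controlled a posteriori and does not affect the final exponent. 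Aside from these two points your overall strategy --- inner control from Proposition \ref{prop:4.1}, a barrier/maximum principle argument in the annulus with the sign of $-\varepsilon\alpha_i$ playing the role you correctly identified, and Schauder rescaling for $k=1,2$ --- is the right one and is exactly the approach of the Felli--Ould Ahmedou lemma the paper cites in lieu of a written proof.
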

Since $\alpha<0$ the proof of Proposition \ref{prop:Lemma 4.4} is
analogous of Lemma 2.7 of \cite{FA}.
\begin{prop}
\label{prop:eps-i}Let $x_{i}\rightarrow x_{0}$ be an isolated simple
blow-up point for $\left\{ u_{i}\right\} _{i}$ and $\alpha<0$. Then
$\varepsilon_{i}\rightarrow0$ 
\end{prop}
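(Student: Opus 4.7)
The plan is to argue by contradiction via the local Pohozaev identity of Theorem \ref{thm:poho}. Suppose $\varepsilon_i \not\to 0$, so that along a subsequence $\varepsilon_i \ge \varepsilon_0 > 0$. Fix a small $r>0$ and apply Theorem \ref{thm:poho} to $u_i$ on $B_r^+$ in Fermi coordinates centered at $x_i$. I will compare orders of magnitude in the identity $P(u_i,r) = \hat P(u_i,r)$ and show that the $\varepsilon$-term of $\hat P$ dominates everything else, forcing a contradiction.

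For the left-hand side, on $\partial^+ B_r^+$ and on $\partial(\partial'B_r^+)$ the function $u_i$ and its derivatives are controlled by Proposition \ref{prop:Lemma 4.4}. For any small $\eta>0$ this yields, with $\lambda_i = 1 - \frac{2\eta}{n-2}$,
$$
|P(u_i,r)| \le C_r\, M_i^{-2\lambda_i} = C_r\, M_i^{-2 + o(1)}.
$$
For the right-hand side, I rescale by $y = \delta_i z$ and use Proposition \ref{prop:4.1} ($v_i \to U$ in $C^2_{\mathrm{loc}}$) together with $\alpha_i(x_i) \to \alpha(x_0)$. The $\varepsilon$-contribution to $\hat P$ becomes
$$
\frac{n-2}{2}\varepsilon_i \int\limits_{\partial' B_r^+}\!\!\Big(\bar y^k\partial_k u_i + \tfrac{n-2}{2}u_i\Big)\alpha_i u_i\, d\bar y \;=\; \varepsilon_i\,\alpha(x_0)\, M_i^{-2/(n-2)}\, A \;+\; o\!\big(\varepsilon_i M_i^{-2/(n-2)}\big),
$$
with $A := \frac{n-2}{2}\int_{\mathbb{R}^{n-1}} \big(\bar z^k\partial_k U + \tfrac{n-2}{2}U\big)\, U\, d\bar z$. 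A direct computation on $\{y_n = 0\}$ gives $\bar z^k \partial_k U + \tfrac{n-2}{2}U = \frac{(n-2)(1-|\bar z|^2)}{2(1+|\bar z|^2)^{n/2}}$, and applying the third identity of (\ref{eq:Iam}) with $m=n-1$, $\alpha=n-2$ yields $A = -\frac{(n-2)\omega_{n-2}}{n-1}I_{n-1}^{n} \ne 0$. Since $\alpha(x_0)<0$, the product $\alpha(x_0)A$ is a strictly positive constant, so this term is bounded below in absolute value by a positive multiple of $\varepsilon_i M_i^{-2/(n-2)}$.

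The remaining terms of $\hat P$ (the bulk integral of $(L_g - \Delta)u_i$ and the boundary integral containing $h_g$) are handled by substituting the expansions of Remark \ref{rem:confnorm}, rescaling by $\delta_i$, and invoking Proposition \ref{prop:Lemma 4.4}. Since in conformally Fermi coordinates $g^{ab} - \delta^{ab} = O(|y|^2)$ and $h_g = O(|y|^4)$ in the umbilic case (with the analogous gains in Almaraz's normalization for the non-umbilic case), each such term is an $O(M_i^{-\mu})$ with $\mu > 2/(n-2)$ and hence $o(M_i^{-2/(n-2)})$. Combining all three ingredients in $P(u_i,r) = \hat P(u_i,r)$ and using $n \ge 7$ (so that $2/(n-2) < 2$), I get
$$
\varepsilon_i\, |\alpha(x_0)|\, |A| \le C_r M_i^{-2 + 2/(n-2) + o(1)} + o(1) \;\longrightarrow\; 0,
$$
contradicting $\varepsilon_i \ge \varepsilon_0$. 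The main technical obstacle is the last of these estimates: one must carefully check that none of the geometric terms in $\hat P$ (in either the umbilic or non-umbilic normalization) generates a contribution of order $M_i^{-2/(n-2)}$ or slower, which is precisely where the dimension assumptions enter.
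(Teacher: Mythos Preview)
Your argument is correct and matches the paper's almost exactly: apply the Pohozaev identity, bound $P(u_i,r)$ via Proposition~\ref{prop:Lemma 4.4}, show the $\varepsilon$-boundary term is $\sim \varepsilon_i\delta_i$ with a definite sign (your computation of the sign of $A$ agrees with the paper's, up to an inessential numerical factor), and check that the remaining pieces of $\hat P$ are $O(\delta_i^{2-2\eta})=o(\delta_i)$; the paper phrases the conclusion as the direct inequality $-c\delta_i^{2-2\eta}+(A+o(1))\varepsilon_i\delta_i\le c\delta_i^{\lambda_i(n-2)}$ rather than a contradiction, which is only cosmetic.

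One caution on your non-umbilic parenthetical: in that normalization $g^{ij}-\delta^{ij}=2h_{ij}(q)\,y_n+O(|y|^2)$, so the crude bound on the bulk term is only $O(\delta_i^{1-2\eta})$, which is \emph{not} $o(\delta_i)$. Recovering the needed gain requires using that $h_{ij}$ is trace-free, so that the leading contribution vanishes by angular symmetry once $v_i$ is replaced by $U$; this is a genuine extra step, not merely ``analogous gains''. In the umbilic setting where the paper actually states and proves Proposition~\ref{prop:eps-i}, your estimates go through as written.
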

\begin{proof}
We compute the Pohozaev identity in a ball of radius $r$ and we set
$\frac{r}{\delta_{i}}=:R_{i}\rightarrow\infty$. We estimate any term
of $P(u_{i},r_{i})$ and $\hat{P}(u_{i},r_{i})$.

We set 
\begin{align*}
I_{1}(u,r):= & \int\limits _{\partial^{+}B_{r}^{+}}\left(\frac{n-2}{2}u\frac{\partial u}{\partial r}-\frac{r}{2}|\nabla u|^{2}+r\left|\frac{\partial u}{\partial r}\right|^{2}\right)d\sigma_{r}\\
I_{2}(u,r):= & \frac{r(n-2)^{2}}{2(n-1)}\int\limits _{\partial(\partial'B_{r}^{+})}u^{\frac{2(n-1)}{n-2}}d\bar{\sigma}_{g},
\end{align*}
so $P(u_{i},r)=I_{1}(u_{i},r)+I_{2}(u_{i},r)$

By Proposition \ref{prop:Lemma 4.4} we have
\begin{align*}
I_{1}(u_{i},r) & =M_{i}^{-2\lambda_{i}}I_{1}(M_{i}^{\lambda_{i}}u_{i},r)\le cM_{i}^{-2\lambda_{i}}\int\limits _{\partial^{+}B_{r}^{+}}|y|^{2(2-n+\eta)}d\sigma_{r}\le c\delta_{i}^{\lambda_{i}(n-2)}\\
I_{2}(u_{i},r) & \le cM^{-\lambda_{i}\frac{2(n-1)}{n-2}}\le c\delta_{i}^{\lambda_{i}(n-2)}
\end{align*}
Then

\begin{equation}
P(u_{i},r)\le\delta_{i}^{\lambda_{i}(n-2)}.\label{eq:poho1}
\end{equation}
In a similar way we decompose
\begin{align*}
\hat{P}(u,r): & =-\int\limits _{B_{r}^{+}}\left(y^{a}\partial_{a}u+\frac{n-2}{2}u\right)[(L_{g}-\Delta)u]dy+\frac{n-2}{2}\int\limits _{\partial'B_{r}^{+}}\left(\bar{y}^{k}\partial_{k}u+\frac{n-2}{2}u\right)h_{g}ud\bar{y}\\
 & +\frac{n-2}{2}\varepsilon\int\limits _{\partial'B_{r}^{+}}\left(\bar{y}^{k}\partial_{k}u+\frac{n-2}{2}u\right)\alpha ud\bar{y}=:I_{3}(u_{i},r)+I_{4}(u_{i},r)+I_{5}(u_{i},r).
\end{align*}
By Proposition \ref{prop:Lemma 4.4} and by definition of $v_{i}$
we have 
\[
|\nabla^{k}v_{i}(s)|\le M_{i}^{\eta\frac{2}{n-2}}|1+|s||^{2-k-n}=\delta_{i}^{-\eta}|1+|s||^{2-k-n}.
\]
So, after a change of variables, since $|h_{g_{i}}(\delta_{i}s)|\le O(\delta_{i}^{4}|s|^{4})$,
\begin{equation}
|I_{4}(u_{i},r)|=\frac{n-2}{2}\delta_{i}\int\limits _{\partial'B_{R_{i}}^{+}}\left(\bar{s}^{k}\partial_{k}v_{i}+\frac{n-2}{2}v_{i}\right)h_{g_{i}}(\delta_{i}s)v_{i}d\bar{s}\le c\delta_{i}^{5-2\eta}.\label{eq:poho3}
\end{equation}
Analogously 
\[
I_{5}(u_{i},r)=\varepsilon_{i}\delta_{i}\int\limits _{\partial'B_{R_{i}}^{+}}\left(\bar{s}^{k}\partial_{k}v_{i}+\frac{n-2}{2}v_{i}\right)\alpha_{i}(\delta_{i}s)v_{i}d\bar{s}.
\]
Since $\alpha_{i}(\delta_{i}s)=\Lambda_{x_{i}}^{\frac{2}{2-n}}(\delta_{i}s)\alpha_{i}(\delta_{i}s)$
and by Claim 1 of Proposition \ref{prop:4.1} and (\ref{eq:avj})
we get
\begin{align}
\lim_{i\rightarrow\infty} & \int\limits _{\partial'B_{R_{i}}^{+}}\left(\bar{s}^{k}\partial_{k}v_{i}+\frac{n-2}{2}v_{i}\right)\alpha_{i}(\delta_{i}s)v_{i}d\bar{s}\label{eq:poho4}\\
 & =\alpha(x_{0})\int\limits _{\partial'B_{R_{i}}^{+}}\left(\bar{s}^{k}\partial_{k}U+\frac{n-2}{2}U\right)Ud\bar{s}\nonumber \\
 & =\frac{n-2}{2}\alpha(x_{0})\int\limits _{\mathbb{R}^{n-1}}\frac{1-|\bar{s}|^{2}}{\left[1+|\bar{s}|^{2}\right]^{n-1}}d\bar{s}=:A>0.\nonumber 
\end{align}
Furthermore we have 
\[
I_{3}(u_{i},r)=-\int\limits _{B_{r}^{+}}\left(s^{a}\partial_{a}v_{i}+\frac{n-2}{2}v\right)[(L_{\hat{g}}-\Delta)v_{i}]dy
\]
and it holds 
\[
\left(L_{\hat{g}}-\Delta\right)v=\left(g^{kl}(\delta_{i}s)-\delta^{kl}\right)\partial_{kl}v+\delta_{i}\partial_{k}g^{kl}(\delta_{i}s)\partial_{l}v-\delta_{i}^{2}\frac{n-2}{4(n-1)}R_{g}(\delta_{i}s)v+O(\delta_{i}^{N}|s|^{N-1})\partial_{l}v
\]
we have 
\begin{equation}
|I_{3}(u_{i},r)|\le c\delta_{i}^{2-2\eta}\label{eq:poho2}
\end{equation}

Concluding, by (\ref{eq:poho1}), (\ref{eq:poho2}), (\ref{eq:poho3}),
(\ref{eq:poho4}) we get 
\[
-c\delta_{i}^{2-2\eta}+(A+o(1))\varepsilon_{i}\delta_{i}\le\delta_{i}^{\lambda_{i}(n-2)}
\]
 which is possible only if $\varepsilon_{i}\rightarrow0$.
\end{proof}
Since $\varepsilon_{i}\rightarrow0$ by Prop. \ref{prop:eps-i}, the
proof of the next proposition is analogous to Prop. 4.3 in \cite{Al}
\begin{prop}
\label{prop:4.3}Let $x_{i}\rightarrow x_{0}$ be an isolated simple
blow-up point for $\left\{ u_{i}\right\} _{i}$ and $\alpha<0$. Then
there exist $C,\rho>0$ such that 
\begin{enumerate}
\item $M_{i}u_{i}(\psi_{i}(y))\le C|y|^{2-n}$ for all $y\in B_{\rho}^{+}(0)\smallsetminus\left\{ 0\right\} $;
\item $M_{i}u_{i}(\psi_{i}(y))\ge C^{-1}G_{i}(y)$ for all $y\in B_{\rho}^{+}(0)\smallsetminus B_{r_{i}}^{+}(0)$
where $r_{i}:=R_{i}M_{i}^{\frac{2}{2-n}}$ and $G_{i}$ is the Green\textquoteright s
function which solves
\[
\left\{ \begin{array}{ccc}
L_{g_{i}}G_{i}=0 &  & \text{in }B_{\rho}^{+}(0)\smallsetminus\left\{ 0\right\} \\
G_{i}=0 &  & \text{on }\partial^{+}B_{\rho}^{+}(0)\\
B_{g_{i}}G_{i}=0 &  & \text{on }\partial'B_{\rho}^{+}(0)\smallsetminus\left\{ 0\right\} 
\end{array}\right.
\]
\end{enumerate}
and $|y|^{n-2}G_{i}(y)\rightarrow1$ as $|z|\rightarrow0$.
\end{prop}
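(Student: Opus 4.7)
The strategy is to follow Almaraz's proof of \cite[Prop.~4.3]{Al}, checking that the linear perturbation $\varepsilon_i\alpha_i u_i$ appearing in (\ref{eq:Prob-i}) does not alter any step. By Proposition \ref{prop:eps-i} one already has $\varepsilon_i\to 0$, and since $\alpha<0$ on $\partial M$ the extra term contributes a small, nonpositive zeroth-order coefficient on the boundary, which is sign-favorable for the maximum principle and compatible with the Harnack inequality. Hence every comparison used in the unperturbed case remains valid and Almaraz's argument transplants essentially verbatim.

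For the upper bound in (1), I would split $B_\rho^+(0)$ into an inner region $|y|\le r_i=R_iM_i^{2/(2-n)}$ and an outer annulus $r_i<|y|<\rho$. In the inner region, the $C^2$-convergence $v_i\to U$ from Proposition \ref{prop:4.1} together with the decay $U(y)\le C(1+|y|)^{2-n}$ gives directly $M_iu_i(\psi_i(y))\le C|y|^{2-n}$, once the rescaling is undone (the two scales match at $|y|\asymp\delta_i$, since $M_i^2\delta_i^{n-2}=1$). In the outer annulus I would argue by contradiction: if the inequality failed there would exist points $\bar y_i$ with $|\bar y_i|>r_i$ and $|\bar y_i|^{n-2}M_iu_i(\psi_i(\bar y_i))\to\infty$. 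Rescaling around $\bar y_i$ at scale $|\bar y_i|$ and using the Harnack inequality on dyadic annuli together with the pointwise bound of Proposition \ref{prop:Lemma 4.4}, one would extract a second nontrivial bubble, producing a second critical point of the spherical average $w_i$ in $(0,\rho)$ and contradicting the isolated simple blow-up assumption.

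For the lower bound in (2), set $\phi_i(y):=M_iu_i(\psi_i(y))$ and work on the annular region $B_\rho^+(0)\setminus B_{r_i}^+(0)$. There $\phi_i>0$, $L_{g_i}\phi_i=0$, and on the flat boundary
\[
B_{g_i}\phi_i+\varepsilon_i\alpha_i\phi_i=(n-2)M_i^{-2/(n-2)}\phi_i^{n/(n-2)},
\]
whose right-hand side is $O(M_i^{-2/(n-2)})\to 0$ by part~(1), while $\varepsilon_i\alpha_i\to 0$. Up to subsequences, $\phi_i$ therefore converges on compacta of $B_\rho^+\setminus\{0\}$ to a positive solution of the homogeneous limit problem, and the matching identity $\phi_i(\delta_iz)=M_i^2v_i(z)\to M_i^2U(z)$ on the overlap $|z|\asymp R_i$ identifies the singularity at the origin as $|y|^{2-n}$ with unit leading coefficient. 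Standard Green's-function theory for the pair $(L_{g_i},B_{g_i})$ provides $G_i$ with the normalization $|y|^{n-2}G_i(y)\to 1$, and the maximum principle applied to $\phi_i-C^{-1}G_i$ on the annulus then yields the claimed comparison. The main technical hurdle will be the uniformity in $i$ of all constants; this is precisely the role of the isolated simple blow-up assumption, which rules out secondary concentrations in the annular region and thereby ensures compactness of the family $\{\phi_i\}$.
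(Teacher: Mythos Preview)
Your proposal is correct and matches the paper's approach: the paper does not give a detailed proof either, but simply observes that since $\varepsilon_i\to 0$ by Proposition~\ref{prop:eps-i} the argument is analogous to \cite[Prop.~4.3]{Al}. Your write-up says the same thing and additionally sketches how Almaraz's proof runs, which is accurate in spirit; the only minor remark is that in Almaraz's actual argument the outer-annulus contradiction is obtained via a Pohozaev-type identity applied to the limit $h=a|y|^{2-n}+b(y)$ of $M_iu_i$, rather than directly extracting a second bubble, but your version leads to the same isolated-simple contradiction.
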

By Proposition \ref{prop:4.1} and Proposition \ref{prop:4.3} we
have that, if $x_{i}\rightarrow x_{0}$ is an isolated simple blow-up
point for $\left\{ u_{i}\right\} _{i}$, then it holds
\[
v_{i}\le CU\text{ in }B_{\rho M_{i}^{\frac{2}{2-n}}}^{+}(0).
\]

\section{Blowup estimates\label{sec:Blowup-estimates}}

Our aim is to provide a fine estimate for the approximation of the
rescaled solution near an isolated simple blow up point.

In the following lemma, given a point $q\in\partial M$, we introduce
the function $\gamma_{q}$ which arises from the secondo order term
of the expansion of the metric $g$ on $M$ (see \ref{eq:gij}). The
choice of this function plays a fundamental role in this paper. Using
the function $\gamma_{q}$ we are able to cancel the term of second
order in formula (\ref{eq:Qi-parziale}). Also, the estimates of Proposition
\ref{prop:stimawi} and of Lemma \ref{lem:R(U,U)} depend on the properties
of function $\gamma_{q}$.

For the proof of the Lemma we refer to \cite[Lemma 3]{GMP} and \cite[Proposition 5.1]{Al}.
\begin{lem}
\label{lem:vq}Assume $n\ge5$. Given a point $q\in\partial M$, there
exists a unique $\gamma_{q}:\mathbb{R}_{+}^{n}\rightarrow\mathbb{R}$
a solution of the linear problem 
\begin{equation}
\left\{ \begin{array}{ccc}
-\Delta\gamma=\left[\frac{1}{3}\bar{R}_{ijkl}(q)y_{k}y_{l}+R_{ninj}(q)y_{n}^{2}\right]\partial_{ij}^{2}U &  & \text{on }\mathbb{R}_{+}^{n}\\
\frac{\partial\gamma}{\partial y_{n}}=-nU^{\frac{2}{n-2}}\gamma &  & \text{on }\partial\mathbb{R}_{+}^{n}
\end{array}\right.\label{eq:vqdef}
\end{equation}
which is $L^{2}(\mathbb{R}_{+}^{n})$-orthogonal to the functions
$j_{1},\dots,j_{n}$ defined in (\ref{eq:jl}) and (\ref{eq:jn}).

Moreover it holds
\begin{equation}
|\nabla^{\tau}\gamma_{q}(y)|\le C(1+|y|)^{4-\tau-n}\text{ for }\tau=0,1,2.\label{eq:gradvq}
\end{equation}
\begin{equation}
\int_{\mathbb{R}_{+}^{n}}\gamma_{q}\Delta\gamma_{q}dy\le0,\label{new}
\end{equation}

\begin{equation}
\int_{\partial\mathbb{R}_{+}^{n}}U^{\frac{n}{n-2}}(t,z)\gamma_{q}(t,z)dz=0\label{eq:Uvq}
\end{equation}
\begin{equation}
\gamma_{q}(0)=\frac{\partial\gamma_{q}}{\partial y_{1}}(0)=\dots=\frac{\partial\gamma_{q}}{\partial y_{n-1}}(0)=0.\label{eq:dervq}
\end{equation}

Finally the map $q\mapsto\gamma_{q}$ is $C^{2}(\partial M)$.
\end{lem}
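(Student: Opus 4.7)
The plan is to obtain $\gamma_q$ by applying the Fredholm alternative to the operator $L_U\phi:=-\Delta\phi$ on $\mathbb{R}^n_+$ subject to the linearised boundary condition $\partial_{y_n}\phi+nU^{2/(n-2)}\phi=0$, whose kernel on the natural homogeneous Sobolev space is $\mathrm{span}(j_1,\dots,j_n)$ by (\ref{eq:linearizzato}), and then to read off the four stated properties from the structure of the source. Write the right-hand side as $F(y):=f_{ij}(y)\,\partial^{2}_{ij}U$ with $f_{ij}(y):=\tfrac{1}{3}\bar R_{ikjl}(q)y_ky_l+R_{ninj}(q)y_n^2$ (tangential $i,j,k,l$). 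The first task is to verify the Fredholm compatibility $\int_{\mathbb{R}^n_+} F\,j_a\,dy=0$ for $a=1,\dots,n$. For the tangential indices $a=l$ this is a parity check: $j_l$ is odd in $y_l$, while the would-be odd-in-$y_l$ part of $F$ consists of exactly those terms with one index in $\{i,j,k,l\}$ equal to $l$, and these cancel in pairs after exploiting the antisymmetry $\bar R_{ikjl}=-\bar R_{kijl}$ and the pair-swap $\bar R_{ikjl}=\bar R_{jlik}$. For $a=n$ a chain of integrations by parts, using first the vanishing divergences $\sum_i\partial_if_{ij}=0=\sum_j\partial_jf_{ij}$ (which reduce to $\bar R_{j\alpha}(q)=0$ from (\ref{eq:Ricci})) and then the Euler homogeneity of $f_{ij}$, reduces $\int F\,j_n$ to a multiple of $\int f_{ij}\partial_iU\partial_jU$; the spherical angular averages in $\bar y$ then collapse the coefficient into traces of the curvature tensor that vanish at $q$ by (\ref{eq:Rii})--(\ref{eq:Ricci}).

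Once $F$ is placed in the range of $L_U$, the Fredholm alternative produces the unique $\gamma_q\in(\ker L_U)^\perp$, and each property follows in turn. The decay (\ref{eq:gradvq}) is obtained from a Green's function representation: $|F|\le C(1+|y|)^{2-n}$ (quadratic $f_{ij}$ times $|\partial^2_{ij}U|\le C(1+|y|)^{-n}$) convolved with the Green's kernel of decay $|y|^{2-n}$ gives $|\gamma_q|\le C(1+|y|)^{4-n}$, and the derivative bounds follow from interior/boundary Schauder estimates. For (\ref{new}), multiplying $-\Delta\gamma_q=F$ by $\gamma_q$ and using the boundary condition gives
\[
\int_{\mathbb{R}^n_+}\gamma_q\Delta\gamma_q\,dy=-\int|\nabla\gamma_q|^2+n\int_{\partial\mathbb{R}^n_+}U^{2/(n-2)}\gamma_q^2\,dz=-Q(\gamma_q),
\]
where $Q(\phi):=\int|\nabla\phi|^2-n\int_{\partial\mathbb{R}^n_+}U^{2/(n-2)}\phi^2$ is the quadratic form associated with $L_U$; since $U$ minimises the Sobolev--trace quotient, $Q$ is non-negative on $(\ker L_U)^\perp$, yielding the inequality. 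Identity (\ref{eq:Uvq}) comes from pairing the equation with $U$: since $\Delta U=0$, Green's identity and both Robin-type boundary conditions give $2\int_{\partial\mathbb{R}^n_+}U^{n/(n-2)}\gamma_q\,dz=\int UF\,dy$, and the right side vanishes by exactly the same curvature cancellations used for the $j_n$ compatibility. Finally, the parity argument of the first paragraph also shows that $F$ is even in each tangential $y_k$, so by uniqueness $\gamma_q$ inherits this evenness, giving $\partial_{y_k}\gamma_q(0)=0$ for $k\le n-1$; the value $\gamma_q(0)=0$ is then obtained from the normalisation coming from $L^2$-orthogonality to $j_n$ together with the boundary condition at the origin $\partial_{y_n}\gamma_q(0)=-n\gamma_q(0)$ and a further cancellation provided by $\int UF=0$.

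The main obstacle is the bookkeeping for the $j_n$-compatibility and, equivalently, for $\int UF=0$: both reduce, after several non-parity integrations by parts, to the algebraic identities $\bar R_{kl}(q)=0$, $R_{nn}(q)=0$ and $\bar R(q)=0$ holding in conformal Fermi coordinates at $q$ thanks to (\ref{eq:Rii})--(\ref{eq:Ricci}). These computations are delicate but standard; for the explicit calculations I would follow \cite[Lemma 3]{GMP} and \cite[Proposition 5.1]{Al}, where the same construction is carried out in essentially the present setting. The $C^2$-dependence $q\mapsto\gamma_q$ follows from the smooth dependence of the source $F$ on $q$ together with the continuity of the Fredholm inverse on $(\ker L_U)^\perp$.
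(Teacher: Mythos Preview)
Your overall strategy---Fredholm alternative on $(\ker L_U)^\perp$, compatibility checks via the curvature identities of Remark~\ref{rem:confnorm}, Green-function decay, and an explicit deferral to \cite[Lemma~3]{GMP} and \cite[Proposition~5.1]{Al}---is precisely the paper's approach; indeed the paper's own proof is nothing more than a pointer to those two references. There is, however, a genuine error in your derivation of \eqref{new}.

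You assert that $Q(\phi)=\int_{\mathbb{R}^n_+}|\nabla\phi|^2-n\int_{\partial\mathbb{R}^n_+}U^{2/(n-2)}\phi^2$ is non-negative on the $L^2(\mathbb{R}^n_+)$-orthogonal complement of $\mathrm{span}(j_1,\dots,j_n)$. This is false. Take $\phi=U+cj_n$ with $c=\int_{\mathbb{R}^n_+}U^2\big/\int_{\mathbb{R}^n_+}j_n^2$; a short computation gives $\int_{\mathbb{R}^n_+}Uj_n=-\int_{\mathbb{R}^n_+}U^2$, so $\phi\perp_{L^2}j_n$ by construction and $\phi\perp_{L^2}j_l$ for tangential $l$ by parity. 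Since $Q(j_n,\cdot)\equiv0$ and $\int|\nabla U|^2=(n-2)\int_{\partial}U^{2(n-1)/(n-2)}$, one gets $Q(\phi)=Q(U)=-2\int_{\partial\mathbb{R}^n_+}U^{2(n-1)/(n-2)}<0$. What the minimality of $U$ for the Sobolev--trace quotient actually yields is $Q(\phi)\ge0$ on the hyperplane $\bigl\{\phi:\int_{\partial\mathbb{R}^n_+}U^{n/(n-2)}\phi=0\bigr\}$, which is exactly \eqref{eq:Uvq}. So the correct order is: first prove \eqref{eq:Uvq} (you already sketch this), and then invoke it---not the $L^2(\mathbb{R}^n_+)$-orthogonality to the $j_a$---to conclude $Q(\gamma_q)\ge0$ and hence \eqref{new}.

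A smaller point: your argument for $\gamma_q(0)=0$ is too vague to stand on its own. In the cited references this is read off from the explicit form of $\gamma_q$ (built from homogeneous polynomials times powers of $(1+y_n)^2+|\bar y|^2$, each vanishing at the origin), not from the abstract normalisation you describe.
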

In this section $x_{i}\rightarrow x_{0}$ is an isolated simple blowup
point for a sequence $\left\{ u_{i}\right\} _{i}$ of solutions of
(\ref{eq:Prob-i}). We will work in the conformal Fermi coordinates
in a neighborhood of $x_{i}$.

Set $\tilde{u}_{i}=\Lambda_{x_{i}}^{-1}u_{i}$ and
\begin{equation}
\delta_{i}:=\tilde{u}_{i}^{\frac{2}{2-n}}(x_{i})=u_{i}^{\frac{2}{2-n}}(x_{i})=M_{i}^{\frac{2}{2-n}}\ \ \ v_{i}(y):=\delta_{i}^{\frac{n-2}{2}}u_{i}(\delta_{i}y)\text{ for }y\in B_{\frac{R}{\delta_{i}}}^{+}(0).\label{eq:deltai}
\end{equation}
 Then $v_{i}$ satisfies 
\begin{equation}
\left\{ \begin{array}{cc}
L_{\hat{g}_{i}}v_{i}=0 & \text{ in }B_{\frac{R}{\delta_{i}}}^{+}(0)\\
B_{\hat{g}_{i}}v_{i}+(n-2)v_{i}^{\frac{n}{n-2}}-\varepsilon_{i}\alpha_{i}(\delta_{i}y)v_{i}=0 & \text{ on }\partial'B_{\frac{R}{\delta_{i}}}^{+}(0)
\end{array}\right.\label{eq:Prob-hat}
\end{equation}
 where $\hat{g}_{i}:=\tilde{g}_{i}(\delta_{i}y)=\Lambda_{x_{i}}^{\frac{4}{n-2}}(\delta_{i}y)g(\delta_{i}y)$,
and $\alpha_{i}(y)=\Lambda_{x_{i}}^{-\frac{2}{n-2}}(y)\alpha(y)$. 

The estimates that follow are similar to the ones of \cite[Lemma 6.1]{Al}
and \cite[Section 4]{GM}, where the main difference is the term containing
the linear perturbation $\alpha$. For the sake of self-containedness
we sketch the main proofs.
\begin{lem}
\label{lem:coreLemma}Assume $n\ge8$. Let $\gamma_{x_{i}}$ be defined
in (\ref{eq:vqdef}). There exist $R,C>0$ such that 
\[
|v_{i}(y)-U(y)-\delta_{i}^{2}\gamma_{x_{i}}(y)|\le C\left(\delta_{i}^{3}+\varepsilon_{i}\delta_{i}\right)
\]
for $|y|\le R/\delta_{i}$.
\end{lem}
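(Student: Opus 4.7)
The strategy is the standard contradiction/blow-up analysis in the spirit of Marques and Almaraz. Set $\phi_i := v_i - U - \delta_i^2 \gamma_{x_i}$ and $\tau_i := \delta_i^3 + \varepsilon_i \delta_i$, and suppose, after a subsequence, that
\[
\Lambda_i := \tau_i^{-1} \|\phi_i\|_{L^\infty(B^+_{R/\delta_i})} \to \infty.
\]
Normalizing by $w_i := \phi_i/(\Lambda_i \tau_i)$ one has $\|w_i\|_\infty = 1$, and the goal is to show $w_i \to 0$, reaching a contradiction.

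First I would derive the equation satisfied by $w_i$. Using the expansion (\ref{eq:gij}) of $\hat{g}_i$, the operator $(L_{\hat g_i}-\Delta)U$ has leading contribution $\delta_i^2[\frac{1}{3}\bar R_{ikjl}(x_i) y_k y_l + R_{ninj}(x_i) y_n^2]\partial_{ij}U$, which by the defining equation (\ref{eq:vqdef}) for $\gamma_{x_i}$ exactly cancels $\delta_i^2\Delta\gamma_{x_i}$. The vanishing of the Ricci terms (\ref{eq:Ricci}) kills the cubic contributions of the $y_ky_ly_m$--coefficient, so $L_{\hat g_i}\phi_i=O(\delta_i^3)(1+|y|)^{5-n}$. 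Similarly, on $\partial'$, expanding $v_i^{n/(n-2)}$ around $U$ and using the Neumann condition for $\gamma_{x_i}$ yields
\[
\frac{\partial \phi_i}{\partial y_n} + n U^{2/(n-2)}\phi_i = \delta_i \varepsilon_i \alpha_i(\delta_i y) v_i + O(\delta_i^4)(1+|y|)^{3-n} + Q_i,
\]
where $Q_i$ collects quadratic terms of the form $U^{(4-n)/(n-2)}(\phi_i+\delta_i^2\gamma_{x_i})^2$ which are absorbed thanks to $n\ge8$ and the decay (\ref{eq:gradvq}) of $\gamma_{x_i}$. Dividing by $\Lambda_i\tau_i$ sends both right-hand sides to zero.

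Next I would combine this with the uniform pointwise bound $v_i(y)\le CU(y)$, a consequence of Propositions \ref{prop:4.1} and \ref{prop:4.3}, to obtain a weighted decay $|w_i(y)|\le C(1+|y|)^{2-n}$. Together with standard Schauder estimates up to the flat boundary this guarantees, after passing to a subsequence, $w_i\to w_\infty$ in $C^2_{\mathrm{loc}}(\overline{\mathbb{R}^n_+})$, where $w_\infty$ solves the linearized problem (\ref{eq:linearizzato}); by the classification of its $H^1$ solutions, $w_\infty=\sum_{a=1}^n c_a j_a$. Finally I would kill the coefficients. Since $x_i$ is a local maximum of $u_i|_{\partial M}$ and by (\ref{eq:dervq}) $\nabla_{\bar y}\gamma_{x_i}(0)=0$, we have $\partial_k w_i(0)=0$ for $k=1,\dots,n-1$; direct computation gives $\partial_k j_l(0)=-(n-2)\delta_{kl}$ and $\partial_k j_n(0)=0$, forcing $c_1=\dots=c_{n-1}=0$. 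Moreover $v_i(0)=1=U(0)$ and $\gamma_{x_i}(0)=0$, so $w_i(0)=0$ and therefore $c_n=0$. Thus $w_\infty\equiv 0$, which together with the weighted decay (confining the maximum of $w_i$ to a bounded region) contradicts $\|w_i\|_\infty=1$.

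The main obstacle is securing the uniform weighted decay estimate on $w_i$ needed to prevent the maximum from escaping to $|y|\sim R/\delta_i$. This requires a Green-function comparison refining Propositions \ref{prop:Lemma 4.4} and \ref{prop:4.3}, and it is exactly here that the dimension hypothesis $n\ge 8$ plays a role: the quadratic error $U^{(4-n)/(n-2)}\gamma_{x_i}^2$ decays fast enough to be absorbed, and the $(1+|y|)^{5-n}$ and $(1+|y|)^{3-n}$ error weights are integrable against the Green kernel.
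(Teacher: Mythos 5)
Your plan follows the same contradiction/normalization strategy as the paper: set $\phi_i = v_i - U - \delta_i^2\gamma_{x_i}$, suppose $\mu_i := \|\phi_i\|_\infty$ dominates $\delta_i^3 + \varepsilon_i\delta_i$, normalize to $w_i = \phi_i/\mu_i$, pass to a limit solving the linearized problem, classify and kill the coefficients at the origin, and contradict $\|w_i\|_\infty = 1$. The coefficient-killing step ($\partial_k w_i(0)=0$ from the local maximum, $w_i(0)=0$ from the normalization $v_i(0)=1=U(0)$ and $\gamma_{x_i}(0)=0$) is exactly the paper's argument.

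However, the derivation of the weighted decay of $w_i$ contains a genuine gap. You write that combining the equation for $w_i$ with the bound $v_i(y)\le CU(y)$ yields $|w_i(y)|\le C(1+|y|)^{2-n}$. This does not follow: the bound $v_i\le CU$ (together with $|\gamma_{x_i}|\lesssim (1+|y|)^{4-n}$) gives $|\phi_i(y)|\le C(1+|y|)^{2-n}$ for $|y|\le R/\delta_i$, i.e.\ a bound on the \emph{unnormalized} remainder, not on $w_i = \phi_i/\mu_i$. Since $\mu_i=\|\phi_i\|_\infty$ may tend to zero, dividing by it destroys any such absolute estimate; the only a priori information on $w_i$ is $\|w_i\|_\infty\le 1$. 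The decay of $w_i$ must come from a genuine Green's--function representation on $B^+_{R/\delta_i}$ for the conformal Laplacian with the oblique boundary condition, plugging in the pointwise estimates on $b_i$, $Q_i$, $F_i$ and the (already known) smallness of $w_i$ on $\partial^+B^+_{R/\delta_i}$, then using the Giraud-type convolution estimate (\ref{eq:ALstimagreen}). Even after that, the rate the paper obtains at this stage is only $|w_i(y)|\le C\bigl((1+|y|)^{-1}+\tfrac{\delta_i^3}{\mu_i}(1+|y|)^{5-n}+\tfrac{\varepsilon_i\delta_i}{\mu_i}(1+|y|)^{3-n}\bigr)$ — enough to classify the limit $w$ and to localize the maximum, but far from $(1+|y|)^{2-n}$. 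You acknowledge in your closing paragraph that a Green's--function comparison is needed, but the main body of the argument asserts the decay as an immediate consequence of Propositions \ref{prop:4.1}--\ref{prop:4.3}, which is incorrect.

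A second, smaller omission: you implicitly assume the maximum point $y_i$ of $|\phi_i|$ stays in $\{|y|\le R/(2\delta_i)\}$. The paper first disposes of the complementary case $|y_i|\gtrsim 1/\delta_i$, where $v_i\le CU$ and $|\gamma_{x_i}|\lesssim(1+|y|)^{4-n}$ directly give $|\phi_i(y_i)|\le C\delta_i^{n-2}$, so the lemma holds trivially there; only then can one use $|\xi-y|\ge R/(2\delta_i)$ on $\partial^+B^+_{R/\delta_i}$ in the Green representation. This case distinction should be included for the argument to close.
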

\begin{proof}
Let $y_{i}$ such that 
\[
\mu_{i}:=\max_{|y|\le R/\delta_{i}}|v_{i}(y)-U(y)-\delta_{i}^{2}\gamma_{x_{i}}(y)|=|v_{i}(y_{i})-U(y_{i})-\delta_{i}^{2}\gamma_{x_{i}}(y_{i})|.
\]
We can assume, without loss of generality, that $|y_{i}|\le\frac{R}{2\delta_{i}}.$

In fact, suppose that there exists $c>0$ such that $|y_{i}|>\frac{c}{\delta_{i}}$
for all $i$. Then, since $v_{i}(y)\le CU(y)$, and by (\ref{eq:gradvq}),
we get the inequality
\[
|v_{i}(y_{i})-U(y_{i})-\delta_{i}^{2}\gamma_{x_{i}}(y_{i})|\le C\left(|y_{i}|^{2-n}+\delta_{i}^{2}|y_{i}|^{4-n}\right)\le C\delta_{i}^{n-2}
\]
which proves the Lemma. So, in the next we will suppose $|y_{i}|\le\frac{R}{2\delta_{i}}$.
This fact will be used later.

By contradiction, suppose that 
\begin{equation}
\max\left\{ \mu_{i}^{-1}\delta_{i}^{3},\mu_{i}^{-1}\varepsilon_{i}\delta_{i}\right\} \rightarrow0\text{ when }i\rightarrow\infty.\label{eq:ipass}
\end{equation}
Defined 
\[
w_{i}(y):=\mu_{i}^{-1}\left(v_{i}(y)-U(y)-\delta_{i}^{2}\gamma_{x_{i}}(y)\right)\text{ for }|y|\le R/\delta_{i},
\]
we have, by direct computation, that $w_{i}$ satisfies 
\begin{equation}
\left\{ \begin{array}{cc}
L_{\hat{g}_{i}}w_{i}=Q_{i} & \text{ in }B_{\frac{R}{\delta_{i}}}^{+}(0)\\
B_{\hat{g}_{i}}w_{i}+b_{i}w_{i}={F}_{i} & \text{ on }\partial'B_{\frac{R}{\delta_{i}}}^{+}(0)
\end{array}\right.\label{eq:wi}
\end{equation}
where 
\begin{align*}
b_{i}= & (n-2)\frac{v_{i}^{\frac{n}{n-2}}-(U+\delta_{i}^{2}\gamma_{x_{i}})^{\frac{n}{n-2}}}{v_{i}-U-\delta_{i}^{2}\gamma_{x_{i}}}\\
\bar{Q}_{i}= & -\frac{1}{\mu_{i}}\left\{ (n-2)(U+\delta_{i}^{2}\gamma_{x_{i}})^{\frac{n}{n-2}}-(n-2)U^{\frac{n}{n-2}}-n\delta_{i}^{2}U^{\frac{2}{n-2}}\gamma_{x_{i}}-\frac{n-2}{2}h_{\hat{g}_{i}}(U+\delta_{i}^{2}\gamma_{x_{i}})\right\} \\
F_{i}= & \bar{Q}_{i}+\frac{\varepsilon_{i}\delta_{i}}{\mu_{i}}\alpha_{i}(\delta_{i}y)v_{i}(y)\\
Q_{i}= & -\frac{1}{\mu_{i}}\left\{ \left(L_{\hat{g}_{i}}-\Delta\right)(U+\delta_{i}^{2}\gamma_{x_{i}})+\delta_{i}^{2}\Delta\gamma_{x_{i}}\right\} .
\end{align*}
We estimate for terms $b_{i},Q_{i,}F_{i}$ obtaining that the sequence
$w_{i}$ converges in $C_{\text{loc}}^{2}(\mathbb{R}_{+}^{n})$ to
some $w$ solution of 
\begin{equation}
\left\{ \begin{array}{cc}
\Delta w=0 & \text{ in }\mathbb{R}_{+}^{n}\\
\frac{\partial}{\partial\nu}w+nU^{\frac{n}{n-2}}w=0 & \text{ on }\partial\mathbb{R}_{+}^{n}
\end{array}\right.,\label{eq:diff-w}
\end{equation}
then we will derive a contradiction using (\ref{eq:ipass}). 

Since $v_{i}\rightarrow U$ in $C_{\text{loc}}^{2}(\mathbb{R}_{+}^{n})$
we have, at once, 
\begin{align}
b_{i} & \rightarrow nU^{\frac{2}{n-2}}\text{ in }C_{\text{loc}}^{2}(\mathbb{R}_{+}^{n})\label{eq:b1}\\
|b_{i}(y)| & \le(1+|y|)^{-2}\text{ for }|y|\le R/\delta_{i}.\label{eq:b2}
\end{align}
We proceed now by estimating $Q_{i}$ and $\bar{Q}_{i}$. We recall
that 
\begin{align}
[L_{\hat{g}_{i}}-\Delta]u(y)= & \left(g_{i}^{kl}(\delta_{i}y)-\delta^{kl}\right)\partial_{k}\partial_{l}u+\delta_{i}\partial_{k}g_{i}^{kl}(\delta_{i}y)\partial_{l}u-\delta_{i}^{2}\frac{n-2}{4(n-1)}R_{g_{i}}(\delta_{i}y)u\nonumber \\
 & +O(\delta_{i}^{N}|y|^{N-1})\partial_{l}u\label{eq:L-Delta}
\end{align}
where $N$ can be chosen arbitrarily large. At this point using the
definition of the function $\gamma_{x_{i}}$, (\ref{eq:L-Delta}),
(\ref{eq:gij}) and the decays properties of $U$ and $\gamma_{x_{i}}$
we obtain
\begin{align}
-\mu_{i}Q_{i}= & \delta_{i}^{2}\left(\frac{1}{3}\bar{R}_{kslj}y_{s}y_{j}+R_{nkns}y_{n}^{2}\right)\left(\partial_{k}\partial_{l}U+\delta_{i}^{2}\partial_{k}\partial_{l}\gamma_{x_{i}}\right)\nonumber \\
 & +O(\delta_{i}^{3}|y|^{3})\left(\partial_{k}\partial_{l}U+\delta_{i}^{2}\partial_{k}\partial_{l}\gamma_{x_{i}}\right)\nonumber \\
 & +\delta_{i}^{2}\left(\frac{1}{3}\bar{R}_{kklj}y_{j}+\frac{1}{3}\bar{R}_{kslk}y_{s}\right)\left(\partial_{l}U+\delta_{i}^{2}\partial_{l}\gamma_{x_{i}}\right)\nonumber \\
 & +O(\delta_{i}^{3}|y|^{2})\left(\partial_{l}U+\delta_{i}^{2}\partial_{l}\gamma_{x_{i}}\right)\nonumber \\
 & +O(\delta_{i}^{4}|y|^{2})\left(U+\delta_{i}^{2}\gamma_{x_{i}}\right)\nonumber \\
 & +\delta_{i}^{2}\Delta\gamma_{x_{i}}+O(\delta_{i}^{N}|y|^{N-1})\left(\partial_{l}U+\delta_{i}^{2}\partial_{l}\gamma_{x_{i}}\right)\nonumber \\
= & O\left(\delta_{i}^{3}\left(1+|y|\right)^{3-n}\right)+O\left(\delta_{i}^{4}\left(1+|y|\right)^{4-n}\right)+O\left(\delta_{i}^{5}\left(1+|y|\right)^{5-n}\right)\nonumber \\
 & +O\left(\delta_{i}^{6}\left(1+|y|\right)^{6-n}\right)+O\left(\delta_{i}^{N}\left(1+|y|\right)^{N-n}\right)O\left(\delta_{i}^{N+2}\left(1+|y|\right)^{N+2-n}\right).\label{eq:Qi-parziale}
\end{align}
Since $|y|\le R/\delta_{i}$, we have $\delta_{i}\left(1+|y|\right)\le C$,
thus 
\begin{equation}
Q_{i}=O(\mu_{i}^{-1}\delta_{i}^{3}\left(1+|y|\right)^{3-n}).\label{eq:Q}
\end{equation}
In light of (\ref{eq:ipass}) we have also $Q_{i}\in L^{p}(B_{R/\delta_{i}}^{+})$
for all $p\ge2$.

By Taylor expansion, and proceeding as above we have
\begin{align*}
-\mu_{i}\bar{Q}_{i,1}= & \left\{ \delta_{i}^{4}\frac{2}{n-2}(U+\theta\delta_{i}^{2}\gamma_{x_{i}})^{\frac{4-n}{n-2}}\gamma_{x_{i}}^{2}-\delta_{i}\frac{n-2}{2}h_{g_{i}}(\delta_{i}y)(U+\delta_{i}^{2}\gamma_{x_{i}})\right\} \\
= & O(\delta_{i}^{4}\left(1+|y|\right)^{5-n}).
\end{align*}
Since $|v_{i}(y)|\le CU(y)$ we have
\begin{align}
F_{i} & =\bar{Q_{i}}+O(\mu_{i}^{-1}\varepsilon_{i}\delta_{i}\left(1+|y|\right)^{2-n})\label{eq:Qbar}\\
 & =O(\mu_{i}^{-1}\delta_{i}^{4}\left(1+|y|\right)^{5-n})+O(\mu_{i}^{-1}\varepsilon_{i}\delta_{i}\left(1+|y|\right)^{2-n}),\nonumber 
\end{align}
and $F_{i}\in L^{p}(\partial'B_{R/\delta_{i}}^{+})$ for all $p\ge2$. 

Finally we remark that $|w_{i}(y)|\le1$, so by (\ref{eq:ipass})
(\ref{eq:b1}), (\ref{eq:b2}), (\ref{eq:Q}), (\ref{eq:Qbar}) and
by standard elliptic estimates we conclude that, up to subsequence,
$\left\{ w_{i}\right\} _{i}$ converges in $C_{\text{loc}}^{2}(\mathbb{R}_{+}^{n})$
to some $w$ solution of (\ref{eq:diff-w}).

The next step is to prove that $|w(y)|\le C(1+|y|^{-1})$ for $y\in\mathbb{R}_{+}^{n}$.
Consider $G_{i}$ the Green function for the conformal Laplacian $L_{\hat{g}_{i}}$
defined on $B_{r/\delta_{i}}^{+}$ with boundary conditions $B_{\hat{g}_{i}}G_{i}=0$
on $\partial'B_{r/\delta_{i}}^{+}$ and $G_{i}=0$ on $\partial^{+}B_{r/\delta_{i}}^{+}$.
It is well known that $G_{i}=O(|\xi-y|^{2-n})$. By the Green formula
and by (\ref{eq:Q}) and (\ref{eq:Qbar}) we have
\begin{align*}
w_{i}(y)= & -\int_{B_{\frac{R}{\delta_{i}}}^{+}}G_{i}(\xi,y)Q_{i}(\xi)d\mu_{\hat{g}_{i}}(\xi)-\int_{\partial^{+}B_{\frac{R}{\delta_{i}}}^{+}}\frac{\partial G_{i}}{\partial\nu}(\xi,y)w_{i}(\xi)d\sigma_{\hat{g}_{i}}(\xi)\\
 & +\int_{\partial'B_{\frac{R}{\delta_{i}}}^{+}}G_{i}(\xi,y)\left(b_{i}(\xi)w_{i}(\xi)-F_{i}(\xi)\right)d\sigma_{\hat{g}_{i}}(\xi),
\end{align*}
so 
\begin{align*}
|w_{i}(y)| & \le\frac{\delta_{i}^{3}}{\mu_{i}}\int_{B_{\frac{R}{\delta_{i}}}^{+}}|\xi-y|^{2-n}(1+|\xi|)^{3-n}d\xi+\int_{\partial^{+}B_{\frac{R}{\delta_{i}}}^{+}}|\xi-y|^{1-n}w_{i}(\xi)d\sigma(\xi)\\
 & +\int_{\partial'B_{\frac{R}{\delta_{i}}}^{+}}|\bar{\xi}-y|^{2-n}\left((1+|\bar{\xi}|)^{-2}+\frac{\delta_{i}^{4}}{\mu_{i}}(1+|\bar{\xi}|)^{5-n}+\frac{\varepsilon_{i}\delta_{i}}{\mu_{i}}(1+|\bar{\xi}|)^{2-n}\right)d\bar{\xi},
\end{align*}
Notice that in the second integral we used that $|y|\le\frac{R}{2\delta_{i}}$
to estimate $|\xi-y|\ge|\xi|-|y|\ge\frac{R}{2\delta_{i}}$ on $\partial^{+}B_{R/\delta_{i}}^{+}$.
Moreover, since $v_{i}(\xi)\le CU(\xi)$, we get
\begin{equation}
|w_{i}(\xi)|\le\frac{C}{\mu_{i}}\left(\left(1+|\xi|\right)^{2-n}+\delta_{i}^{2}\left(1+|\xi|\right)^{4-n}\right)\le C\frac{\delta_{i}^{n-2}}{\mu_{i}}\text{ on }\partial^{+}B_{R/\delta_{i}}^{+};\label{eq:wibordo}
\end{equation}
hence 
\begin{equation}
\int_{\partial^{+}B_{\frac{R}{\delta_{i}}}^{+}}|\xi-y|^{1-n}w_{i}(\xi)d\sigma(\xi)\le C\int_{\partial^{+}B_{\frac{R}{\delta_{i}}}^{+}}\frac{\delta_{i}^{2n-3}}{\mu_{i}}d\sigma_{\hat{g}_{i}}(\xi)\le C\frac{\delta_{i}^{n-2}}{\mu_{i}}.\label{eq:stimaW1}
\end{equation}
 For the other terms we use the following formula (see \cite[Lemma 9.2]{Al}
and \cite{Au,Gi}) 
\begin{equation}
\int_{\mathbb{R}^{m}}|\xi-y|^{\beta-m}(1+|y|)^{-\eta}\le C(1+|y|)^{\beta-\eta}\label{eq:ALstimagreen}
\end{equation}
where $y\in\mathbb{R}^{m+k}\supseteq\mathbb{R}^{m}$, $\eta,\beta\in\mathbb{N}$,
$0<\beta<\eta<m$. We get
\begin{equation}
\frac{\delta_{i}^{3}}{\mu_{i}}\int_{B_{\frac{R}{\delta_{i}}}^{+}}|\xi-y|^{2-n}(1+|\xi|)^{3-n}d\xi\le C\frac{\delta_{i}^{3}}{\mu_{i}}(1+|y|)^{5-n},\label{eq:stimaW2}
\end{equation}

\begin{equation}
\int_{\partial'B_{\frac{R}{\delta_{i}}}^{+}}|\bar{\xi}-y|^{2-n}(1+|\bar{\xi}|)^{-2}d\bar{\xi}\le(1+|y|)^{-1}\label{eq:stimaW3}
\end{equation}
\begin{equation}
\frac{\delta_{i}^{4}}{\mu_{i}}\int_{\partial'B_{\frac{R}{\delta_{i}}}^{+}}|\bar{\xi}-y|^{2-n}(1+|\bar{\xi}|)^{5-n}d\bar{\xi}\le C\frac{\delta_{i}^{4}}{\mu_{i}}(1+|y|)^{6-n}\label{eq:stimaW4}
\end{equation}

\begin{equation}
\frac{\varepsilon_{i}\delta_{i}}{\mu_{i}}\int_{\partial'B_{\frac{R}{\delta_{i}}}^{+}}|\bar{\xi}-y|^{2-n}(1+|\bar{\xi}|)^{2-n}d\bar{\xi}\le C\frac{\varepsilon_{i}\delta_{i}}{\mu_{i}}(1+|y|)^{3-n}.\label{eq:stimaW5}
\end{equation}
By (\ref{eq:stimaW1}), (\ref{eq:stimaW2}), (\ref{eq:stimaW3}),
(\ref{eq:stimaW4}) (\ref{eq:stimaW5}) we have 
\begin{equation}
|w_{i}(y)|\le C\left((1+|y|)^{-1}+\frac{\delta_{i}^{3}}{\mu_{i}}(1+|y|)^{5-n}+\frac{\varepsilon_{i}\delta_{i}}{\mu_{i}}(1+|y|)^{3-n}\right)\text{ for }|y|\le\frac{R}{2\delta_{i}}\label{eq:stimaWiass}
\end{equation}
so by assumption (\ref{eq:ipass}) we prove 
\begin{equation}
|w(y)|\le C(1+|y|)^{-1}\text{ for }y\in\mathbb{R}_{+}^{n}\label{eq:stimaWass}
\end{equation}
as claimed.

Finally we notice that, since $v_{i}\rightarrow U$ near $0$, and
by (\ref{eq:dervq}) we have $w_{i}(0)\rightarrow0$ as well as $\frac{\partial w_{i}}{\partial y_{j}}(0)\rightarrow0$
for $j=1,\dots,n-1$. This implies that 
\begin{equation}
w(0)=\frac{\partial w}{\partial y_{1}}(0)=\dots=\frac{\partial w}{\partial y_{n-1}}(0)=0.\label{eq:W(0)}
\end{equation}
We are ready now to prove the contradiction. In fact, it is known
(see \cite[Lemma 2]{Al}) that any solution of (\ref{eq:diff-w})
that decays as (\ref{eq:stimaWass}) is a linear combination of $\frac{\partial U}{\partial y_{1}},\dots,\frac{\partial U}{\partial y_{n-1}},\frac{n-2}{2}U+y^{b}\frac{\partial U}{\partial y_{b}}$.
This fact, combined with (\ref{eq:W(0)}), implies that $w\equiv0$. 

Now, on one hand $|y_{i}|\le\frac{R}{2\delta_{i}}$, so estimate (\ref{eq:stimaWiass})
holds; on the other hand, since $w_{i}(y_{i})=1$ and $w\equiv0$,
we get $|y_{i}|\rightarrow\infty$, obtaining
\[
1=w_{i}(y_{i})\le C(1+|y_{i}|)^{-1}\rightarrow0
\]
 which gives us the contradiction. 
\end{proof}
\begin{lem}
\label{lem:taui}Assume $n\ge8$ and $\alpha<0$. There exists $C>0$
such that 
\[
\varepsilon_{i}\delta_{i}\le C\delta_{i}^{3}.
\]
\end{lem}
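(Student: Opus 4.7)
The plan is to apply the Pohozaev identity of Theorem~\ref{thm:poho} to $u_i$ on a half-ball $B_r^+$ of fixed small radius, rescale via $y=\delta_i s$ (so the rescaled ball has radius $R_i := r/\delta_i\to\infty$), and exploit the refined approximation $v_i = U + \delta_i^2\gamma_{x_i} + E_i$ with $|E_i|\le C(\delta_i^3 + \varepsilon_i\delta_i)$ provided by Lemma~\ref{lem:coreLemma}. Relative to Proposition~\ref{prop:eps-i}, the objective is to sharpen the bound on $I_3$ from $O(\delta_i^{2-2\eta})$ to $O(\delta_i^3)$, which combined with the other three Pohozaev terms will force $\varepsilon_i\delta_i\le C\delta_i^3$.

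The side terms go through as before. Proposition~\ref{prop:Lemma 4.4} gives $P(u_i,r)\le C\delta_i^{\lambda_i(n-2)} = C\delta_i^{n-2-2\eta}$, which is $o(\delta_i^3)$ for $n\ge 8$ and $\eta$ small; the umbilic conformal-Fermi normalization~\eqref{eq:hij} yields $h_{\hat g_i} = O(\delta_i^4|s|^4)$, whence $|I_4|\le C\delta_i^{5-2\eta} = o(\delta_i^3)$; and the computation already carried out in Proposition~\ref{prop:eps-i} gives $I_5 = (A+o(1))\varepsilon_i\delta_i$ with $A = \tfrac{n-2}{2}\alpha(x_0)\int_{\mathbb{R}^{n-1}}(1-|\bar s|^2)(1+|\bar s|^2)^{1-n}\,d\bar s>0$, positivity coming from $\alpha(x_0)<0$ and the negative sign of the integral (an elementary consequence of~\eqref{eq:Iam}).

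The real work is the estimate of $I_3 = -\int_{B_{R_i}^+}\bigl(s^a\partial_a v_i + \tfrac{n-2}{2}v_i\bigr)\bigl[(L_{\hat g_i}-\Delta)v_i\bigr]ds$. Substituting the refined expansion of $v_i$ and invoking the cancellation built into~\eqref{eq:Qi-parziale}, one has
\[
(L_{\hat g_i}-\Delta)(U + \delta_i^2\gamma_{x_i}) = -\delta_i^2\Delta\gamma_{x_i} + O\bigl(\delta_i^3(1+|s|)^{3-n}\bigr).
\]
Paired with $j_n = s^a\partial_a U + \tfrac{n-2}{2}U = O((1+|s|)^{2-n})$ from~\eqref{eq:jn}, the $O(\delta_i^3)$ remainder produces a convergent integral of size $O(\delta_i^3)$ (the exponent $5-2n$ is integrable for $n\ge 6$), while the contribution of $E_i$, once one couples the pointwise bound from Lemma~\ref{lem:coreLemma} with the derivative estimates that come out of the elliptic argument on $w_i$ in its proof, is at most $O(\delta_i^5 + \varepsilon_i\delta_i^3)$. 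The crux is the apparently leading piece $\delta_i^2\int_{B_{R_i}^+}j_n\,\Delta\gamma_{x_i}\,ds$: by Green's identity, $\Delta j_n = 0$ in the interior; on $\partial'B_{R_i}^+$ the flux boundary terms cancel identically because both $j_n$ and $\gamma_{x_i}$ satisfy the same Robin condition $-\partial_n\phi = nU^{2/(n-2)}\phi$ on $\partial\mathbb{R}^n_+$; and only the spherical part $\partial^+B_{R_i}^+$ survives, which by the decay~\eqref{eq:gradvq} for $\gamma_{x_i}$ and the standard decay of $j_n$ is of size $O(\delta_i^{n-2}) = o(\delta_i^3)$ for $n\ge 8$.

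Combining, the identity $P(u_i,r) = I_3+I_4+I_5$ reduces to
\[
o(\delta_i^3) = O(\delta_i^3) + o(\delta_i^3) + (A+o(1))\varepsilon_i\delta_i - O(\varepsilon_i\delta_i^3),
\]
and absorbing the $\varepsilon_i\delta_i^3$ term on the left for $\delta_i$ small yields $\varepsilon_i\delta_i\le C\delta_i^3$ since $A>0$. The principal obstacle is precisely the $O(\delta_i^3)$ bound on $I_3$: it rests on the integration-by-parts cancellation afforded by the Robin boundary condition defining $\gamma_{x_i}$ in~\eqref{eq:vqdef}, and this is exactly the step that forces both the refined approximation of Lemma~\ref{lem:coreLemma} and the dimensional restriction $n\ge 8$.
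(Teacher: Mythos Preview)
Your argument is correct, but it takes a different route from the paper's. The paper argues by contradiction: assuming $\delta_i^3/(\varepsilon_i\delta_i)\to 0$, it sets $w_i := (\varepsilon_i\delta_i)^{-1}(v_i-U-\delta_i^2\gamma_{x_i})$, tests the boundary equation $B_{\hat g_i}w_i + b_i w_i = F_i$ against the kernel element $j_n$, and integrates by parts. All terms vanish in the limit except $\int_{\partial'} j_n F_i$, which tends to $\alpha(x_0)\int_{\mathbb{R}^{n-1}}(1-|\bar y|^2)(1+|\bar y|^2)^{1-n}\,d\bar y>0$ --- a contradiction. No Pohozaev identity and no decay or derivative information on $E_i=v_i-U-\delta_i^2\gamma_{x_i}$ beyond the bare $|E_i|\le C(\delta_i^3+\varepsilon_i\delta_i)$ of Lemma~\ref{lem:coreLemma} is needed.

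Your approach instead sharpens the Pohozaev balance of Proposition~\ref{prop:eps-i} directly, upgrading the $I_3$ bound from $O(\delta_i^{2-2\eta})$ to $O(\delta_i^3)+O(\varepsilon_i\delta_i^3)$ via the second-order corrector $\gamma_{x_i}$. This works, and the Robin-condition cancellation you use for $\int j_n\Delta\gamma_{x_i}$ is morally the same mechanism the paper exploits at the PDE level. The trade-off is that your route needs pointwise decay \emph{and derivative} bounds on $E_i$ with two scales, roughly $|E_i|\le C\delta_i^3(1+|y|)^{5-n}+C\varepsilon_i\delta_i(1+|y|)^{3-n}$ and analogues for $\nabla E_i$, $\nabla^2 E_i$. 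These do not literally appear in the proof of Lemma~\ref{lem:coreLemma}; you have to run the Green-function iteration of Proposition~\ref{prop:stimawi} while retaining both the $\delta_i^3$ and $\varepsilon_i\delta_i$ contributions (that proposition, as stated, already \emph{uses} Lemma~\ref{lem:taui}, so you cannot simply cite it). This is routine and not circular, but it is extra work that the paper's contradiction argument sidesteps entirely. In short: the paper's proof is shorter and self-contained; yours is more in line with the later Pohozaev analysis of Section~\ref{sec:Sign-estimates} and essentially anticipates Proposition~\ref{prop:stimawi}.
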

\begin{proof}
We proceed by contradiction, supposing that 
\begin{equation}
\left(\varepsilon_{i}\delta_{i}\right)^{-1}\delta_{i}^{3}\rightarrow0\text{ when }i\rightarrow\infty.\label{eq:ipasstau}
\end{equation}
Thus, by Lemma \ref{lem:coreLemma}, we have 
\[
|v_{i}(y)-U(y)-\delta_{i}^{2}\gamma_{x_{i}}(y)|\le C\varepsilon_{i}\delta_{i}\text{ for }|y|\le R/\delta_{i}.
\]
We define, similarly to Lemma \ref{lem:coreLemma},
\[
w_{i}(y):=\frac{1}{\varepsilon_{i}\delta_{i}}\left(v_{i}(y)-U(y)-\delta_{i}^{2}\gamma_{x_{i}}(y)\right)\text{ for }|y|\le R/\delta_{i},
\]
and we have that $w_{i}$ satisfies (\ref{eq:wi}) where 
\begin{align*}
b_{i}= & (n-2)\frac{v_{i}^{\frac{n}{n-2}}-(U+\delta_{i}^{2}\gamma_{x_{i}})^{\frac{n}{n-2}}}{v_{i}-U-\delta_{i}^{2}\gamma_{x_{i}}}\\
\bar{Q}_{i}= & -\frac{1}{\varepsilon_{i}\delta_{i}}\left\{ (n-2)(U+\delta_{i}^{2}\gamma_{x_{i}})^{\frac{n}{n-2}}-(n-2)U^{\frac{n}{n-2}}-n\delta_{i}^{2}U^{\frac{2}{n-2}}\gamma_{x_{i}}-\frac{n-2}{2}h_{\hat{g}_{i}}(U+\delta_{i}^{2}\gamma_{x_{i}})\right\} \\
F_{i}= & \bar{Q}_{i}+\alpha_{i}(\delta_{i}y)v_{i}(y)\\
Q_{i}= & -\frac{1}{\varepsilon_{i}\delta_{i}}\left\{ \left(L_{\hat{g}_{i}}-\Delta\right)(U+\delta_{i}^{2}\gamma_{x_{i}})+\delta_{i}^{2}\Delta\gamma_{x_{i}}\right\} .
\end{align*}
As before, $b_{i}$ satisfies inequality (\ref{eq:b2}) while 
\begin{align}
Q_{i} & =O\left(\left(\varepsilon_{i}\delta_{i}\right)^{-1}\delta_{i}^{3}\left(1+|y|\right)^{3-n}\right)\label{eq:Q-1-1}\\
\bar{Q}_{i} & =O\left(\left(\varepsilon_{i}\delta_{i}\right)^{-1}\delta_{i}^{4}\left(1+|y|\right)^{5-n}\right),\label{eq:Q-1-bar}\\
F_{i} & =O\left(\left(\varepsilon_{i}\delta_{i}\right)^{-1}\delta_{i}^{4}\left(1+|y|\right)^{5-n}\right)+O(\left(1+|y|\right)^{2-n}),\label{eq:Q-1-2}
\end{align}
so by classic elliptic estimates we can prove that the sequence $w_{i}$
converges in $C_{\text{loc}}^{2}(\mathbb{R}_{+}^{n})$ to some $w$.

Moreover, we can proceed as in Lemma \ref{lem:coreLemma} to deduce
that 
\begin{align}
|w_{i}(y)| & \le C\left((1+|y|)^{-1}+\frac{\delta_{i}^{3}}{\varepsilon_{i}\delta_{i}}(1+|y|)^{5-n}+\left(1+|y|\right)^{3-n}\right)\label{eq:decWtau}\\
 & \le C\left((1+|y|)^{-1}+\frac{\delta_{i}^{3}}{\varepsilon_{i}\delta_{i}}(1+|y|)^{5-n}\right)\text{ for }|y|\le\frac{R}{2\delta_{i}}.\nonumber 
\end{align}

Now let $j_{n}$ defined as in (\ref{eq:jn}). In light of (\ref{eq:Q-1-bar})
easily we get 
\[
\lim_{i\rightarrow+\infty}\int_{\partial'B_{\frac{R}{\delta_{i}}}^{+}}j_{n}\bar{Q}_{i}d\sigma_{\hat{g}_{i}}=0.
\]
We recall that $\alpha_{i}(\delta_{i}y)=\Lambda_{x_{i}}^{-\frac{2}{n-2}}(\delta_{i}y)\alpha(\delta_{i}y)$,
so, by Proposition \ref{prop:4.1}, we have 
\[
\alpha_{i}(\delta_{i}y)v_{i}(y)\rightarrow\alpha(x_{0})U(y)\text{ for }i\rightarrow+\infty.
\]
So, since $\alpha<0$, we get, by (\ref{eq:Iam}),
\begin{multline}
\lim_{i\rightarrow+\infty}\int_{\partial'B_{\frac{R}{\delta_{i}}}^{+}}\alpha_{i}(\delta_{i}y)v_{i}(y)j_{n}(y)
=\alpha(x_{0})\int_{\mathbb{R}^{n-1}}\frac{1-|\bar{y}|^{2}}{\left(1+|\bar{y}|^{2}\right)^{n-1}}\\
=\alpha(x_{0})\omega_{n-2}\int_{0}^{+\infty}\frac{s^{n-2}-s^{n}}{(1+s^{2})^{n-1}}ds=-\frac{2\omega_{n-2}}{n-1}\alpha(x_{0})\int_{0}^{+\infty}\frac{s^{n}}{(1+s^{2})^{n-1}}ds>0,\label{eq:avj}
\end{multline}
where $\omega_{n-2}$ is the volume element of the $(n-1)$ unit sphere
and where we used (\ref{eq:Iam}) in the last passage.  Thus we have
\begin{equation}
\lim_{i\rightarrow+\infty}\int_{\partial'B_{\frac{R}{\delta_{i}}}^{+}}j_{n}F_{i}d\sigma_{\hat{g}_{i}}>0,\label{eq:neg}
\end{equation}
and (\ref{eq:neg}) leads us to a contradiction. Indeed, since $w_{i}$
satisfies (\ref{eq:wi}), integrating by parts we obtain
\begin{multline*}
\int_{\partial'B_{\frac{R}{\delta_{i}}}^{+}}j_{n}F_{i}d\sigma_{\hat{g}_{i}}=\int_{\partial'B_{\frac{R}{\delta_{i}}}^{+}}j_{n}\left[B_{\hat{g}_{i}}w_{i}+b_{i}w_{i}\right]d\sigma_{\hat{g}_{i}}\\
=\int_{\partial'B_{\frac{R}{\delta_{i}}}^{+}}w_{i}\left[B_{\hat{g}_{i}}j_{n}+b_{i}j_{n}\right]d\sigma_{\hat{g}_{i}}+\int_{\partial^{+}B_{\frac{R}{\delta_{i}}}^{+}}\left[\frac{\partial j_{n}}{\partial\eta_{i}}w_{i}-\frac{\partial w_{i}}{\partial\eta_{i}}j_{n}\right]d\sigma_{\hat{g}_{i}}\\
+\int_{B_{\frac{R}{\delta_{i}}}^{+}}\left[w_{i}L_{\hat{g}_{i}}j_{n}-j_{n}L_{\hat{g}_{i}}w_{i}\right]d\mu_{\hat{g}_{i}}
\end{multline*}
where $\eta_{i}$ is the inward unit normal vector to $\partial^{+}B_{\frac{R}{\delta_{i}}}^{+}$. 

By the decay of $j_{n}$ and by the decay of $w_{i}$, given by (\ref{eq:decWtau})
and by (\ref{eq:ipasstau}), we have
\begin{equation}
\lim_{i\rightarrow+\infty}\int_{\partial^{+}B_{\frac{R}{\delta_{i}}}^{+}}\left[\frac{\partial j_{n}}{\partial\eta_{i}}w_{i}-\frac{\partial w_{i}}{\partial\eta_{i}}j_{n}\right]d\sigma_{\hat{g}_{i}}=0\label{eq:nullo1}
\end{equation}
and by (\ref{eq:wi}) and by the decay of $Q_{i}$ given in (\ref{eq:Q-1-1})
we have 
\begin{equation}
\lim_{i\rightarrow+\infty}\int_{B_{\frac{R}{\delta_{i}}}^{+}}j_{n}L_{\hat{g}_{i}}w_{i}d\mu_{\hat{g}_{i}}=\int_{B_{\frac{R}{\delta_{i}}}^{+}}j_{n}Q_{i}d\mu_{\hat{g}_{i}}=0.\label{eq:nullo2}
\end{equation}
Finally, since $\Delta j_{n}=0$, by (\ref{eq:L-Delta}) we get 
\begin{equation}
\lim_{i\rightarrow+\infty}\int_{B_{\frac{R}{\delta_{i}}}^{+}}w_{i}L_{\hat{g}_{i}}j_{n}d\mu_{\hat{g}_{i}}=0,\label{eq:nullo3}
\end{equation}
thus by (\ref{eq:nullo1}) (\ref{eq:nullo2}) and (\ref{eq:nullo3})
we have
\begin{align}
\lim_{i\rightarrow+\infty}\int_{\partial'B_{\frac{R}{\delta_{i}}}^{+}}j_{n}F_{i}d\sigma_{\hat{g}_{i}} & =\lim_{i\rightarrow+\infty}\int_{\partial'B_{\frac{R}{\delta_{i}}}^{+}}w_{i}\left[B_{\hat{g}_{i}}j_{n}+b_{i}j_{n}\right]d\sigma_{\hat{g}_{i}}\nonumber \\
 & =\int_{\partial\mathbb{R}_{+}^{n}}w\left[\frac{\partial j_{n}}{\partial y_{n}}+nU^{\frac{2}{n-2}}j_{n}\right]d\sigma_{\hat{g}_{i}}=0\label{eq:nullofinale}
\end{align}
since $\frac{\partial j_{n}}{\partial y_{n}}+nU^{\frac{2}{n-2}}j_{n}=0$
when $y_{n}=0$. Comparing (\ref{eq:neg}) and (\ref{eq:nullofinale})
we get the contradiction.
\end{proof}
The above lemmas are the core of the following proposition, in which
we iterate the procedure of Lemma \ref{lem:coreLemma}, to obtain
better estimates of the rescaled solution $v_{i}$ of (\ref{eq:Prob-hat})
around the isolated simple blow up point $x_{i}\rightarrow x_{0}$.
\begin{prop}
\label{prop:stimawi}Assume $n\ge8$. Let $\gamma_{x_{i}}$ be defined
in (\ref{eq:vqdef}). There exist $R,C>0$ such that 
\begin{align*}
|\nabla_{\bar{y}}^{\tau}v_{i}(y)-U(y)-\delta_{i}^{2}\gamma_{x_{i}}(y)| & \le C\delta_{i}^{3}(1+|y|)^{5-\tau-n}\\
\left|y_{n}\frac{\partial}{\partial_{n}}\left(v_{i}(y)-U(y)-\delta_{i}^{2}\gamma_{x_{i}}(y)\right)\right| & \le C\delta_{i}^{3}(1+|y|)^{5-n}
\end{align*}
for $|y|\le\frac{R}{2\delta_{i}}$. Here $\tau=0,1,2$ and $\nabla_{\bar{y}}^{\tau}$
is the differential operator of order $\tau$ with respect the first
$n-1$ variables. 
\end{prop}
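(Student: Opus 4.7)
The plan is to iterate the contradiction scheme of Lemma \ref{lem:coreLemma} in a Green's-function bootstrap, following the strategy of \cite[Lemma~6.1]{Al} and \cite[Section~4]{GM}, and adapting it to the perturbed setting via Lemma \ref{lem:taui}. As a starting point, by combining Lemma \ref{lem:coreLemma} with Lemma \ref{lem:taui} (which forces $\varepsilon_{i}\delta_{i}\le C\delta_{i}^{3}$), the quantity $\phi_{i}:=v_{i}-U-\delta_{i}^{2}\gamma_{x_{i}}$ satisfies the uniform bound $|\phi_{i}(y)|\le C\delta_{i}^{3}$ for $|y|\le R/\delta_{i}$. I will upgrade this $L^{\infty}$ bound to the desired weighted pointwise decay by representing $\phi_{i}$ through the Green's function $G_{i}$ of $L_{\hat{g}_{i}}$ on $B_{R/\delta_{i}}^{+}$ with the mixed boundary conditions $B_{\hat{g}_{i}}G_{i}=0$ on $\partial'B_{R/\delta_{i}}^{+}$ and $G_{i}=0$ on $\partial^{+}B_{R/\delta_{i}}^{+}$, as already exploited in Lemma \ref{lem:coreLemma}.

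A direct computation analogous to the one in Lemma \ref{lem:coreLemma} shows that $\phi_{i}$ solves $L_{\hat{g}_{i}}\phi_{i}=Q_{i}^{\ast}$ in the bulk with $Q_{i}^{\ast}=O(\delta_{i}^{3}(1+|y|)^{3-n})$, and the linearized boundary condition $B_{\hat{g}_{i}}\phi_{i}+nU^{2/(n-2)}\phi_{i}=F_{i}^{\ast}$ on $\partial'B_{R/\delta_{i}}^{+}$, where $F_{i}^{\ast}$ collects the quadratic remainder of the Taylor expansion of $v_{i}^{n/(n-2)}$, the curvature term $\tfrac{n-2}{2}h_{\hat{g}_{i}}(U+\delta_{i}^{2}\gamma_{x_{i}})$, and the new perturbation $\varepsilon_{i}\alpha_{i}v_{i}$. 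Each of these pieces has an explicit decay in $|y|$ that I will control by repeatedly applying the convolution inequality (\ref{eq:ALstimagreen}): the bulk contribution $\int G_{i}Q_{i}^{\ast}$ produces exactly a term of size $\delta_{i}^{3}(1+|y|)^{5-n}$, while the curvature and the $\varepsilon_{i}$ contributions produce terms that can be absorbed into the target decay thanks to Lemma \ref{lem:taui} ($\varepsilon_{i}\le C\delta_{i}^{2}$) and the hypothesis $n\ge 8$.

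The most delicate contribution is the one coming from the linearized reaction term $nU^{2/(n-2)}\phi_{i}$ on the boundary, and this is where the bootstrap enters. Starting from the crude bound $|\phi_{i}|\le C\delta_{i}^{3}$, the corresponding boundary integral yields a first gain of the form $|\phi_{i}(y)|\le C\delta_{i}^{3}(1+|y|)^{-1}$; reinserting this improved bound into the representation formula and applying (\ref{eq:ALstimagreen}) with the next admissible exponent $\eta$, one gains one unit of decay at each step. Iterating finitely many times one reaches $|\phi_{i}(y)|\le C\delta_{i}^{3}(1+|y|)^{5-n}$, at which point the boundary contribution is absorbed by the bulk forcing and the scheme closes. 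The derivative estimates for $\nabla_{\bar{y}}^{\tau}\phi_{i}$ then follow by standard boundary Schauder estimates applied on balls $B_{(1+|y_{0}|)/2}(y_{0})$, after rescaling by the factor $(1+|y_{0}|)$: each tangential derivative produces an extra factor $(1+|y|)^{-1}$, yielding the claimed $\delta_{i}^{3}(1+|y|)^{5-\tau-n}$. The last assertion about $y_{n}\partial_{n}\phi_{i}$ is obtained by the same rescaling argument combined with the observation that the vector field $y_{n}\partial_{n}$ is scale-invariant, so no loss of decay occurs with respect to the $\tau=0$ case; the boundary equation is used to rewrite the normal derivative at $\{y_{n}=0\}$ in terms of already controlled quantities.

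The main obstacle is the careful bookkeeping of the bootstrap: one has to verify at every step that the exponent $\eta$ in (\ref{eq:ALstimagreen}) stays within the admissible range $1<\eta<n-1$, which is precisely where the hypothesis $n\ge 8$ is needed. A secondary obstacle, peculiar to the perturbed problem (\ref{eq:Prob-2}) and not present in \cite{Al,GM}, is the treatment of the new slow-decaying boundary contribution $\varepsilon_{i}\alpha_{i}v_{i}=O(\varepsilon_{i}(1+|y|)^{2-n})$; Lemma \ref{lem:taui} is essential here, as it ensures that after integration against $G_{i}$ this term is dominated by the target bound $C\delta_{i}^{3}(1+|y|)^{5-n}$.
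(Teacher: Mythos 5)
Your proposal follows essentially the same route as the paper's proof: set $w_{i}=v_{i}-U-\delta_{i}^{2}\gamma_{x_{i}}$, use Lemma \ref{lem:coreLemma} together with Lemma \ref{lem:taui} to get the uniform starting bound $|w_{i}|\le C\delta_{i}^{3}$, write $w_{i}$ via the Green function of $L_{\hat g_{i}}$ with the mixed boundary conditions, estimate the bulk forcing $Q_{i}=O(\delta_{i}^{3}(1+|y|)^{3-n})$ and the boundary terms (the linearized reaction $b_{i}w_{i}$, the curvature piece, and the new $\varepsilon_{i}\delta_{i}\alpha_{i}v_{i}$ piece) through (\ref{eq:ALstimagreen}), and iterate the boundary contribution one unit of decay at a time until it reaches the bulk-dominated bound $C\delta_{i}^{3}(1+|y|)^{5-n}$; derivative estimates then follow by the standard rescaled Schauder argument. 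This is exactly the structure of (\ref{eq:Green}) through (\ref{eq:wi-improv2-1-1}) in the paper, and your identification of Lemma \ref{lem:taui} as the step that tames the otherwise slow-decaying $\varepsilon_{i}\alpha_{i}v_{i}$ term is the correct and decisive observation for the perturbed problem.

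One small imprecision: you attribute the hypothesis $n\ge8$ to keeping the exponent $\eta$ in (\ref{eq:ALstimagreen}) in the admissible range during the bootstrap, but in fact the convolution exponents already close for considerably smaller $n$; the restriction $n\ge8$ is inherited from the construction and decay of $\gamma_{q}$ and from Lemma \ref{lem:coreLemma}/Lemma \ref{lem:taui}, not from the bootstrap bookkeeping itself. This does not affect the validity of the argument.
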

\begin{proof}
In analogy with Lemma \ref{lem:coreLemma}, we set 
\[
w_{i}(y):=v_{i}(y)-U(y)-\delta_{i}^{2}\gamma_{x_{i}}(y)\text{ for }|y|\le R/\delta_{i},
\]
and we have that $w_{i}$ satisfies (\ref{eq:wi}) where 
\begin{align*}
b_{i}= & (n-2)\frac{v_{i}^{\frac{n}{n-2}}-(U+\delta_{i}^{2}\gamma_{x_{i}})^{\frac{n}{n-2}}}{v_{i}-U-\delta_{i}^{2}\gamma_{x_{i}}}\\
\bar{Q}_{i}= & -\frac{1}{\varepsilon_{i}\delta_{i}}\left\{ (n-2)(U+\delta_{i}^{2}\gamma_{x_{i}})^{\frac{n}{n-2}}-(n-2)U^{\frac{n}{n-2}}-n\delta_{i}^{2}U^{\frac{2}{n-2}}\gamma_{x_{i}}-\frac{n-2}{2}h_{\hat{g}_{i}}(U+\delta_{i}^{2}\gamma_{x_{i}})\right\} \\
F_{i}= & \bar{Q}_{i}+\varepsilon_{i}\delta_{i}\alpha_{i}(\delta_{i}y)v_{i}(y)\\
Q_{i}= & -\frac{1}{\varepsilon_{i}\delta_{i}}\left\{ \left(L_{\hat{g}_{i}}-\Delta\right)(U+\delta_{i}^{2}\gamma_{x_{i}})+\delta_{i}^{2}\Delta\gamma_{x_{i}}\right\} .
\end{align*}
As before, $b_{i}$ satisfies inequality (\ref{eq:b2}) and 
\begin{align}
Q_{i} & =O(\delta_{i}^{3}\left(1+|y|\right)^{3-n})\label{eq:Q-1}\\
F_{i} & =O(\delta_{i}^{4}\left(1+|y|\right)^{5-n})+O(\delta_{i}^{3}\left(1+|y|\right)^{2-n})\label{eq:Q-2}
\end{align}
We define again the Green function $G_{i}$ as in the previous lemma
and we have, by Green formula,
\begin{align}
|w_{i}(y)|\le & \int_{B_{\frac{R}{\delta_{i}}}^{+}}|\xi-y|^{2-n}Q_{i}(\xi)d\xi+\int_{\partial^{+}B_{\frac{R}{\delta_{i}}}^{+}}|\xi-y|^{1-n}w_{i}(\xi)d\sigma(\xi)\nonumber \\
 & +\int_{\partial'B_{\frac{R}{\delta_{i}}}^{+}}|\bar{\xi}-y|^{2-n}b_{i}(\xi)w_{i}(\xi)d\bar{\xi})+\int_{\partial'B_{\frac{R}{\delta_{i}}}^{+}}|\bar{\xi}-y|^{2-n}\bar{F}_{i}^{*}(\xi)d\bar{\xi}.\label{eq:Green}
\end{align}
By the results of Lemma \ref{lem:coreLemma} and Lemma \ref{lem:taui},
and in analogy with equation (\ref{eq:wibordo}) we have that 
\begin{eqnarray}
|w_{i}(y)|\le C\delta_{i}^{3}\text{ on }B_{R/\delta_{i}}^{+} & \text{ and } & |w_{i}(\xi)|\le C\delta_{i}^{n-2}\text{ on }\partial^{+}B_{R/\delta_{i}}^{+}.\label{eq:w_i-improved}
\end{eqnarray}
Plugging (\ref{eq:b2}), (\ref{eq:Q-1}), (\ref{eq:Q-2}) and (\ref{eq:w_i-improved})
in (\ref{eq:Green}) and proceeding as in Lemma \ref{lem:coreLemma}
we obtain
\begin{align}
\int_{B_{\frac{R}{\delta_{i}}}^{+}}|\xi-y|^{2-n}Q_{i}(\xi)d\xi & \le C\delta_{i}^{3}(1+|y|)^{5-n}\label{eq:W1-1}\\
\int_{\partial^{+}B_{\frac{R}{\delta_{i}}}^{+}}|\xi-y|^{1-n}w_{i}(\xi)d\sigma(\xi) & \le C\delta_{i}^{n-2}\label{eq:W2-1}\\
\int_{\partial'B_{\frac{R}{\delta_{i}}}^{+}}|\bar{\xi}-y|^{2-n}b_{i}(\xi)w_{i}(\xi)d\bar{\xi}) & \le\delta_{i}^{3}(1+|y|)^{-1}\label{eq:W3-1}\\
\int_{\partial'B_{\frac{R}{\delta_{i}}}^{+}}|\bar{\xi}-y|^{2-n}\bar{Q}_{i}(\xi)d\bar{\xi} & \le C\delta_{i}^{3}(1+|y|)^{5-n}\label{eq:W4-1}\\
\int_{\partial'B_{\frac{R}{\delta_{i}}}^{+}}|\bar{\xi}-y|^{2-n}\varepsilon_{i}\delta_{i}\alpha_{i}(\delta_{i}\xi)v_{i}(\xi)d\bar{\xi} & \le C\delta_{i}^{3}(1+|y|)^{5-n}\label{eq:W5-1}
\end{align}
so
\begin{equation}
|w_{i}(y)|\le C\delta_{i}^{3}(1+|y|)^{-1}\text{ for }|y|\le\frac{R}{2\delta_{i}}.\label{eq:wi-improv2}
\end{equation}
As before, we iterate the procedure until we reach 
\begin{equation}
|w_{i}(y)|\le C\delta_{i}^{3}(1+|y|)^{5-n}\text{ for }|y|\le\frac{R}{2\delta_{i}},\label{eq:wi-improv2-1-1}
\end{equation}
which proves the first claim for $\tau=0$. The other claims follow
as in the previous proofs.
\end{proof}

\section{Sign estimates of Pohozaev identity terms\label{sec:Sign-estimates}}

In this section, we want to estimate $P(u_{i},r)$, where $\left\{ u_{i}\right\} _{i}$
is a family of solutions of (\ref{eq:Prob-i}) which has an isolated
simple blow up point $x_{i}\rightarrow x_{0}$. This estimate, given
in the following Proposition \ref{prop:segno}, is a crucial point
for the proof of the vanishing of the Weyl tensor at an isolated simple
blow up point.

Since the leading term of $P(u_{i},r)$ will be $-\int_{B_{r/\delta_{i}}^{+}}\left(y^{b}\partial_{b}u+\frac{n-2}{2}u\right)\left[(L_{\hat{g}_{i}}-\Delta)v\right]dy$
we set
\begin{equation}
R(u,v)=-\int_{B_{r/\delta_{i}}^{+}}\left(y^{b}\partial_{b}u+\frac{n-2}{2}u\right)\left[(L_{\hat{g}_{i}}-\Delta)v\right]dy.\label{eq:Ruv}
\end{equation}

\begin{prop}
\label{prop:segno}Let $x_{i}\rightarrow x_{0}$ be an isolated simple
blow-up point for $u_{i}$ solutions of (\ref{eq:Prob-i}). Then,
fixed $r$, we have, for $i$ large 
\begin{align*}
\hat{P}(u_{i},r)\ge & \delta_{i}^{4}\frac{(n-2)\omega_{n-2}I_{n}^{n}}{(n-1)(n-3)(n-5)(n-6)}\left[\frac{\left(n-2\right)}{6}|\bar{W}(x_{i})|^{2}+\frac{4(n-8)}{(n-4)}R_{nlnj}^{2}(x_{i})\right]\\
 & -2\delta_{i}^{4}\int_{\mathbb{R}_{+}^{n}}\gamma_{x_{i}}\Delta\gamma_{x_{i}}dy+o(\delta_{i}^{4}).
\end{align*}
\end{prop}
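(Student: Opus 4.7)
The plan is to start from the decomposition $\hat P(u_i,r)=I_3(u_i,r)+I_4(u_i,r)+I_5(u_i,r)$ used in the proof of Proposition \ref{prop:eps-i}, show that $I_4$ is negligible and $I_5$ is non-negative, and reduce the problem to the careful analysis of the conformal-Laplacian term $R(u_i,u_i)=I_3(u_i,r)$. For $I_4$, the bound $|h_{g_i}(y)|=O(|y|^4)$ from (\ref{eq:hij}) together with $v_i\le CU$ (Proposition \ref{prop:4.3}) and the rescaling $y=\delta_i s$ gives $|I_4|\le C\delta_i^{5-2\eta}=o(\delta_i^4)$. For $I_5$, the sign computation (\ref{eq:poho4}) shows $I_5=\varepsilon_i\delta_i[A+o(1)]$ with $A>0$ since $\alpha(x_0)<0$, so $I_5\ge 0$ for $i$ large and can be discarded in a lower bound.

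The bulk of the proof is to control $R(u_i,u_i)$. After rescaling, it reduces (modulo $o(\delta_i^4)$ contributions) to
\[
R(u_i,u_i)=-\int_{B_{r/\delta_i}^+}\!\!\left(s^a\partial_a v_i+\tfrac{n-2}{2}v_i\right)\bigl[(L_{\hat g_i}-\Delta)v_i\bigr]ds.
\]
Into this we plug the refined expansion $v_i=U+\delta_i^2\gamma_{x_i}+w_i$ with $|w_i|\le C\delta_i^3(1+|s|)^{5-n}$ from Proposition \ref{prop:stimawi}, and we expand $L_{\hat g_i}-\Delta$ using (\ref{eq:gij}) and (\ref{eq:L-Delta}). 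The order-$\delta_i^2$ terms cancel: by the very definition (\ref{eq:vqdef}) of $\gamma_{x_i}$, one has $(L_{\hat g_i}-\Delta)U=-\delta_i^2\Delta\gamma_{x_i}+O(\delta_i^3)$, while the first-derivative contribution $\delta_i\partial_kg^{kl}\partial_lU$ vanishes at leading order because $\partial_kA^{(2)}_{kl}\propto\bar R_{lj}y_j=0$ at $q$ by (\ref{eq:Ricci}). Hence the $\delta_i^2$ term of $R(u_i,u_i)$ equals $\delta_i^2\int j_n\Delta\gamma_{x_i}$, which integrates to $0$ using $\Delta j_n=0$ on $\mathbb R_+^n$ and the fact that both $j_n$ and $\gamma_{x_i}$ satisfy $\partial_n\phi+nU^{2/(n-2)}\phi=0$ on $\partial\mathbb R_+^n$.

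At order $\delta_i^4$ three types of contributions survive: (i) the cubic/quartic Taylor terms of $g^{ij}$ in (\ref{eq:gij}) paired with $\partial^2U$ and $\partial U$, together with the scalar curvature piece $-\delta_i^2\frac{n-2}{4(n-1)}R_g(\delta_i s)U$ which contributes at order $\delta_i^4$ via (\ref{eq:Rii})--(\ref{eq:Rnnnn}); (ii) the cross term $-\delta_i^4\int j_n A^{(2)}_{kl}\partial_k\partial_l\gamma_{x_i}\,ds$ coming from $(L_{\hat g_i}-\Delta)(\delta_i^2\gamma_{x_i})$; and (iii) the cross term $-\delta_i^4\int (s^a\partial_a\gamma_{x_i}+\frac{n-2}{2}\gamma_{x_i})A^{(2)}_{kl}\partial_k\partial_lU\,ds$ coming from the $\delta_i^2\gamma_{x_i}$ piece of the outer factor paired with the $\delta_i^2$ piece of $(L_{\hat g_i}-\Delta)U$. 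Using $-\Delta\gamma_{x_i}=A^{(2)}_{kl}\partial_k\partial_lU$ and integration by parts in (ii)+(iii), and exploiting the orthogonality (\ref{eq:Uvq}) to absorb the resulting boundary integral, the cross-terms combine into $-2\delta_i^4\int_{\mathbb R_+^n}\gamma_{x_i}\Delta\gamma_{x_i}\,dy$, which is $\ge 0$ by (\ref{new}). The remaining error from $w_i$ is $o(\delta_i^4)$ by direct estimation using the decay in Proposition \ref{prop:stimawi} together with $\varepsilon_i\delta_i=O(\delta_i^3)$ from Lemma \ref{lem:taui}.

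The main obstacle is the bookkeeping at order $\delta_i^4$ in contribution (i): one must use the parity of the integrand in the boundary coordinates $\bar s$ to kill the odd-order Taylor terms, invoke $\bar R_{ij}(q)=0$, $R_{ninj,ij}$-identities and (\ref{eq:Rii})--(\ref{eq:Rnnnn}) to re-express second derivatives of $\bar R_g$ in terms of $|\bar W|^2$ and $R_{ninj}^2$, and repeatedly apply the recursions (\ref{eq:Iam}) for $I_m^\alpha$. This is precisely the type of computation carried out in \cite[Sec.~5]{Al} and \cite[Prop.~5.1, Lem.~5.2]{GM}, and at the end it yields exactly the coefficient $\frac{(n-2)\omega_{n-2}I_n^n}{(n-1)(n-3)(n-5)(n-6)}\bigl[\frac{n-2}{6}|\bar W(x_i)|^2+\frac{4(n-8)}{n-4}R_{nlnj}^2(x_i)\bigr]$, which is non-negative for $n\ge 8$. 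The new ingredient compared to the unperturbed case is only the bookkeeping of the $\alpha$-term, already handled separately via $I_5$ and Lemma \ref{lem:taui}.
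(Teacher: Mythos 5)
Your proposal follows essentially the same route as the paper. You decompose $\hat P(u_i,r)$ into the $(L_{\hat g_i}-\Delta)$-term, the $h_{g_i}$-term, and the $\varepsilon_i\alpha_i$-term; you discard the last one by sign (using $\alpha(x_0)<0$ and the limit computation (\ref{eq:avj})), you show the $h_{g_i}$-term is $o(\delta_i^4)$ from $|h_{g_i}|=O(|y|^4)$, and you reduce to $R(u_i,u_i)$, which you expand bilinearly via $v_i=U+\delta_i^2\gamma_{x_i}+w_i$ using the refined estimate of Proposition \ref{prop:stimawi}. This is precisely the paper's strategy, which packages the resulting $\delta_i^4$-order bookkeeping into Lemma \ref{lem:R(U,U)} and cites \cite{GM} for its proof; you similarly delegate the explicit curvature computations to \cite{Al} and \cite{GM}. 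The additional detail you supply — the vanishing of the $\delta_i^2$-term of $R(U,U)$ via $\Delta j_n=0$ and the fact that both $j_n$ and $\gamma_{x_i}$ satisfy the linearized Robin condition — is a correct and standard observation, and your identification of which $\delta_i^4$-contributions produce the curvature coefficient and which produce $-2\delta_i^4\int\gamma_{x_i}\Delta\gamma_{x_i}$ matches the statement of Lemma \ref{lem:R(U,U)}. Two minor imprecisions, neither of which affects validity: the reason $\delta_i\partial_k g^{kl}\partial_l U$ contributes nothing at order $\delta_i^2$ is a combination of the antisymmetry identity $\bar R_{kklj}=0$ and $\bar R_{sl}(q)=0$ from (\ref{eq:Ricci}), not a single Ricci contraction as you phrase it; and your $|I_4|\le C\delta_i^{5-2\eta}$ is looser than needed — once Proposition \ref{prop:4.3} gives $v_i\le CU$, one gets directly $I_4=O(\delta_i^5)$ as the paper does. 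Also, your claim that the cross terms combine into $-2\delta_i^4\int\gamma_{x_i}\Delta\gamma_{x_i}$ ``by integration by parts and (\ref{eq:Uvq})'' is offered as a one-line heuristic rather than a verification; that identity is nontrivial, but since you (like the paper) properly defer it to the cited references, this does not constitute a gap.
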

\begin{proof}
We recall that
\begin{multline*}
\hat{P}(u_{i},r):=-\int\limits _{B_{r}^{+}}\left(y^{a}\partial_{a}u_{i}+\frac{n-2}{2}u_{i}\right)[(L_{g_{i}}-\Delta)u_{i}]dy+\frac{n-2}{2}\int\limits _{\partial'B_{r}^{+}}\left(\bar{y}^{k}\partial_{k}u_{i}+\frac{n-2}{2}u_{i}\right)h_{g_{i}}u_{i}d\bar{y}\\
+\frac{n-2}{2}\int\limits _{\partial'B_{r}^{+}}\left(\bar{y}^{k}\partial_{k}u_{i}+\frac{n-2}{2}u_{i}\right)\varepsilon_{i}\alpha_{i}u_{i}d\bar{y}.
\end{multline*}
where $B_{r}^{+}$ is the counter-image of $B_{g_{i}}^{+}(x_{i},r)$
by $\psi_{x_{i}}$. Now, set 
\[
v_{i}(y):=\delta_{i}^{\frac{n-2}{2}}u_{i}(\delta_{i}y)\text{ for }y\in B_{\frac{R}{\delta_{i}}}^{+}(0)
\]
After a change of variables we have 
\[
\int\limits _{\partial'B_{r}^{+}}\left(\bar{y}^{k}\partial_{k}u_{i}+\frac{n-2}{2}u_{i}\right)\varepsilon_{i}\alpha_{i}u_{i}d\bar{y}=\varepsilon_{i}\delta_{i}\int\limits _{\partial'B_{r/\delta_{i}}^{+}}\left(\bar{y}^{k}\partial_{k}v_{i}+\frac{n-2}{2}v_{i}\right)\alpha_{i}(\delta_{i}y)v_{i}d\bar{y}.
\]
By Proposition \ref{prop:stimawi} and by (\ref{eq:gradvq}) of Lemma
\ref{lem:vq}, for $|y|<R/\delta_{i}$ we have
\[
\left|v_{i}(y)-U(y)\right|=O(\delta_{i}^{3}(1+|y|^{5-n})+O(\delta_{i}^{2}(1+|y|^{4-n})=O(\delta_{i}^{2}(1+|y|^{4-n})
\]
\[
\left|y_{k}\partial_{k}v_{i}(y)-y_{k}\partial_{k}U(y)\right|=O(\delta_{i}^{3}(1+|y|^{5-n})+O(\delta_{i}^{2}(1+|y|^{4-n})=O(\delta_{i}^{2}(1+|y|^{4-n}),
\]
so 
\[
\int\limits _{\partial'B_{r}^{+}}\left(\bar{y}^{k}\partial_{k}u_{i}+\frac{n-2}{2}u_{i}\right)\varepsilon_{i}\alpha_{i}u_{i}d\bar{y}=\varepsilon_{i}\delta_{i}\int\limits _{\partial'B_{r/\delta_{i}}^{+}}\left(\bar{y}^{k}\partial_{k}U+\frac{n-2}{2}U\right)\alpha_{i}(\delta_{i}y)Ud\bar{y}+\varepsilon_{i}\delta_{i}O(\delta_{i}^{2})
\]
and, recalling that $\alpha_{i}(\delta_{i}y)\rightarrow\alpha(x_{0})<0$
and proceeding as in (\ref{eq:avj}) we get
\[
\lim_{i\rightarrow\infty}\int\limits _{\partial'B_{r/\delta_{i}}^{+}}\left(\bar{y}^{k}\partial_{k}U+\frac{n-2}{2}U\right)\alpha_{i}(\delta_{i}y)Ud\bar{y}=\frac{n-2}{2}\alpha(x_{0})\int\limits _{\mathbb{R}^{n-1}}\frac{1-|\bar{y}|^{2}}{\left[1+|\bar{y}|^{2}\right]^{n-1}}d\bar{y}>0.
\]

Thus, for $i$ sufficiently large we obtain 
\begin{align*}
\hat{P}(u_{i},r) & \ge-\int_{B_{r/\delta_{i}}^{+}}\left(y^{b}\partial_{b}y+\frac{n-2}{2}v_{i}\right)\left[(L_{\hat{g}_{i}}-\Delta)v_{i}\right]dy\\
 & +\frac{n-2}{2}\int_{\partial'B_{r/\delta_{i}}^{+}}\left(y^{b}\partial_{b}v_{i}+\frac{n-2}{2}v_{i}\right)h_{g_{i}}(\delta_{i}y)v_{i}d\bar{y}.
\end{align*}
Since $h_{g_{i}}(\delta_{i}y)=O(\delta_{i}^{4}|y|^{4})$ we have

\begin{multline*}
\int_{\partial'B_{r}^{+}}\left(y^{b}\partial_{b}v_{i}+\frac{n-2}{2}v_{i}\right)h_{g_{i}}(\delta_{i}y)v_{i}d\bar{y}\\
=O(\delta_{i}^{5})\int_{\partial'B_{r}^{+}}(1+|y|)^{4-2n}|y|^{4}dy=O(\delta_{i}^{5})\text{ for }n\ge8.
\end{multline*}
So 
\[
\hat{P}(u_{i},r)\ge-\int_{B_{r/\delta_{i}}^{+}}\left(y^{b}\partial_{b}v_{i}+\frac{n-2}{2}v_{i}\right)\left[(L_{\hat{g}_{i}}-\Delta)v_{i}\right]dy+O(\delta_{i}^{5})
\]
for $i$ sufficiently large. Now define, in analogy with Proposition
\ref{prop:stimawi}, 
\[
w_{i}(y):=v_{i}(y)-U(y)-\delta_{i}^{2}\gamma_{x_{i}}(y).
\]
Recalling (\ref{eq:Ruv}), we have 
\begin{align*}
\hat{P}(u_{i},r) & \ge R(U,U)+R(U,\delta_{i}^{2}\gamma_{x_{i}})+R(\delta_{i}^{2}\gamma_{x_{i}},U)+R(w_{i},U)+R(U,w_{i})\\
 & +R(w_{i,}w_{i})+R(\delta_{i}^{2}\gamma_{q},\delta_{i}^{2}\gamma_{x_{i}})+R(w_{i},\delta_{i}^{2}\gamma_{x_{i}})+R(\delta_{i}^{2}\gamma_{x_{i}},w_{i})+O(\delta_{i}^{5})
\end{align*}
and, by the following Lemma \ref{lem:R(U,U)} we conclude 
\begin{align*}
\hat{P}(u_{i},r)\ge & R(U,U)+R(U,\delta_{i}^{2}\gamma_{x_{i}})+R(\delta_{i}^{2}\gamma_{x_{i}},U)+o(\delta_{i}^{4})\\
= & \delta_{i}^{4}\frac{(n-2)\omega_{n-2}I_{n}^{n}}{(n-1)(n-3)(n-5)(n-6)}\left[\frac{\left(n-2\right)}{6}|\bar{W}(x_{i})|^{2}+\frac{4(n-8)}{(n-4)}R_{nlnj}^{2}(x_{i})\right]\\
 & -2\delta_{i}^{4}\int_{\mathbb{R}_{+}^{n}}\gamma_{x_{i}}\Delta\gamma_{x_{i}}dy+o(\delta_{i}^{4})
\end{align*}
and we prove the result.
\end{proof}
\begin{lem}
\label{lem:R(U,U)}For $n\ge8$ we have 
\begin{align*}
 & R(U,U)=\delta^{4}\frac{(n-2)\omega_{n-2}I_{n}^{n}}{(n-1)(n-3)(n-5)(n-6)}\left[\frac{\left(n-2\right)}{6}|\bar{W}(q)|^{2}+\frac{4(n-8)}{(n-4)}R_{ninj}^{2}\right]+o(\delta^{4})\\
 & R(U,\delta^{2}\gamma_{q})+R(\delta^{2}\gamma_{q},U)=-2\delta^{4}\int_{\mathbb{R}_{+}^{n}}\gamma_{q}\Delta\gamma_{q}dy+o(\delta^{4})\\
 & R(\delta^{2}\gamma_{q},\delta^{2}\gamma_{q})=O(\delta^{6})\\
 & R(w_{i},w_{i})=O(\delta^{6})\\
 & R(U,w_{i})+R(w_{i},U)=O(\delta^{5})\\
 & R(\delta^{2}\gamma_{q},w_{i})+R(w_{i},\delta^{2}\gamma_{q})=O(\delta^{5})
\end{align*}
\end{lem}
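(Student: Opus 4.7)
The common strategy for the six bilinear estimates is to substitute the expansion (\ref{eq:gij}) of the metric into the formula (\ref{eq:L-Delta}) for $L_{\hat g_i}-\Delta$, apply the resulting expansion to each pair of functions appearing in the statement, and carefully track powers of $\delta_i$. The pointwise decay estimates (\ref{eq:gradvq}) for $\gamma_{x_i}$ and those of Proposition \ref{prop:stimawi} for $w_i$, together with the elementary integral identities (\ref{eq:Iam}), furnish the uniform control needed for every integral.

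\textbf{The error terms.} The four estimates $R(\delta_i^2\gamma_{x_i},\delta_i^2\gamma_{x_i})=O(\delta_i^6)$, $R(w_i,w_i)=O(\delta_i^6)$, $R(U,w_i)+R(w_i,U)=O(\delta_i^5)$ and $R(\delta_i^2\gamma_{x_i},w_i)+R(w_i,\delta_i^2\gamma_{x_i})=O(\delta_i^5)$ are a matter of bookkeeping. Two ingredients suffice: first, $L_{\hat g_i}-\Delta$ carries an implicit factor $\delta_i^2$ in every coefficient, since $g_i^{kl}(\delta_i y)-\delta^{kl}=O(\delta_i^2(1+|y|)^2)$ and analogously for the lower-order coefficients; second, on $|y|\le R/\delta_i$ we have $\delta_i|y|\le R$, so every integrand is dominated by an integrable function once $n\ge 8$. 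For example, $|(L_{\hat g_i}-\Delta)w_i|\le C\delta_i^5(1+|y|)^{5-n}$ combined with $|j_n|\le C(1+|y|)^{2-n}$ gives an integrand of order $\delta_i^5(1+|y|)^{7-2n}$, yielding $R(U,w_i)=O(\delta_i^5)$; the remaining cases are analogous.

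\textbf{The $\gamma_{x_i}$-cross term.} Here the defining equation (\ref{eq:vqdef}) of $\gamma_{x_i}$ is the pivotal tool: the leading $\delta_i^2$ part of $(L_{\hat g_i}-\Delta)U$ is exactly $-\delta_i^2\Delta\gamma_{x_i}$, modulo $O(\delta_i^3(1+|y|)^{3-n})$. Substituting into $R(\delta_i^2\gamma_{x_i},U)$ produces a $\delta_i^4$-integral of $(y^b\partial_b\gamma_{x_i}+\tfrac{n-2}{2}\gamma_{x_i})\Delta\gamma_{x_i}$, while a parallel expansion gives the symmetric version for $R(U,\delta_i^2\gamma_{x_i})$. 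Summing and applying the integration by parts
\[\int\bigl(y^b\partial_b f+\tfrac{n-2}{2}f\bigr)g\,dy+\int f\bigl(y^b\partial_b g+\tfrac{n-2}{2}g\bigr)dy=-2\int fg\,dy+\text{boundary}\]
with $f=g=\gamma_{x_i}$, where the boundary contributions are absorbed via the Neumann condition in (\ref{eq:vqdef}), collapses the sum to $-2\delta_i^4\int_{\mathbb{R}_+^n}\gamma_{x_i}\Delta\gamma_{x_i}\,dy+o(\delta_i^4)$.

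\textbf{The main term $R(U,U)$ and the chief obstacle.} The explicit $\delta_i^4$-expansion of $R(U,U)$ is the heart of the lemma. Inserting (\ref{eq:gij}) into $(L_{\hat g_i}-\Delta)U$ and pairing with $j_n$, the contributions at orders $\delta_i^2$ and $\delta_i^3$ vanish after integration thanks to odd parity in the tangential variables combined with the identities $\bar R_{kl}=R_{nk}=R_{nn}=0$ of (\ref{eq:Ricci}). At order $\delta_i^4$ one encounters the tensors $\bar R_{ikjl,mp}$, $\bar R_{iksl}\bar R_{jmsp}$, the normal derivatives $R_{ninj,kl}$, $R_{ninj,nk}$, $R_{ninj,nn}$ and the scalar-curvature terms $\partial_{ii}\bar R_{g_i}$, $\partial_{nn}\bar R_{g_i}$, $R_{nn,nn}$. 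Using (\ref{eq:Rii})--(\ref{eq:Rnnnn}) to replace these combinations by $|\bar W(x_i)|^2$ and $R_{ninj}^2(x_i)$, and reducing the resulting tensor integrals to radial integrals of the form $I_n^n$ via (\ref{eq:Iam}) (times the angular volume $\omega_{n-2}$), one identifies the coefficients $\tfrac{n-2}{6}$ and $\tfrac{4(n-8)}{n-4}$. This tensorial bookkeeping is the main obstacle, but it is analogous to \cite[Sec.~9]{Al} and \cite{GM}; the novelty of the linearly perturbed setting has already been absorbed into the estimates of Lemmas \ref{lem:coreLemma} and \ref{lem:taui}, so it does not affect the leading $\delta_i^4$ coefficient of $R(U,U)$.
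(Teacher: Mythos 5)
The paper itself does not prove this lemma; it simply refers to \cite{GM}, so there is no in-paper argument to compare against. Your sketch reproduces the standard outline used there and in \cite[Sect.~9]{Al} (expand $L_{\hat g_i}-\Delta$ via (\ref{eq:gij}), exploit the defining equation of $\gamma_q$ and the curvature identities of Remark~\ref{rem:confnorm}, and control remainders by the decays of Proposition~\ref{prop:stimawi}), but two of your steps do not hold as stated.

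For the cross term, the integration-by-parts identity you invoke with $f=g=\gamma_{x_i}$ does not directly apply, because the two sides of the sum do not present themselves in matching form. Indeed $R(\delta^2\gamma_q,U)$ reduces cleanly, via $(L_{\hat g_i}-\Delta)U\approx-\delta^2\Delta\gamma_q$, to $\delta^4\int(y^b\partial_b\gamma_q+\tfrac{n-2}{2}\gamma_q)\Delta\gamma_q\,dy$; but the other summand is $R(U,\delta^2\gamma_q)\approx-\delta^4\int j_n\,A_{kl}\partial^2_{kl}\gamma_q\,dy$ with $A_{kl}(y)=\tfrac13\bar R_{kplt}y_py_t+R_{nknl}y_n^2$, and this is not of the shape $\int f\bigl(y^b\partial_bg+\tfrac{n-2}{2}g\bigr)$ for $f=g=\gamma_q$. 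Turning it into an expression against $\Delta\gamma_q$ requires a further integration by parts that exploits the symmetry of $A_{kl}$, the leading-order vanishing of $\partial_kA_{kl}$ (this uses $\bar R_{kl}=0$ from (\ref{eq:Ricci})), $A_{kl}\partial^2_{kl}U=-\Delta\gamma_q$, and the matching boundary conditions for $\gamma_q$ and $j_n$. That computation is where the substance of the cross-term estimate sits, and your argument skips it.

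A smaller but genuine error: the claim that for the remainder terms ``every integrand is dominated by an integrable function once $n\ge 8$'' is false for $8\le n\le 10$. In $R(w_i,w_i)$, for example, the integrand is $O(\delta_i^8(1+|y|)^{10-2n})$, which is not absolutely integrable over $\mathbb{R}_+^n$ for $n\le 10$; the bound $O(\delta_i^6)$ comes instead from integrating over the truncated domain $|y|\le R/\delta_i$, where the growing volume is traded against the extra powers of $\delta_i$ (one even picks up a $|\log\delta_i|$ for $n=10$, still $o(\delta_i^6)$). Likewise, for $R(U,U)$, the order-$\delta^2$ integrand is even in $\bar y$, so parity does not dispose of it; the vanishing there is the angular averaging producing traces ($\bar R_{kl}\delta_{kl}$, $R_{nn}$) killed by (\ref{eq:Ricci}), together with $\int j_n\Delta\gamma_q=0$ coming from $\Delta j_n=0$ and the boundary conditions; the parity argument only handles the $\delta^3$ layer.
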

\begin{proof}
For the proof we refer to \cite{GM}.
\end{proof}
\begin{prop}
\label{prop:7.1}Let $x_{i}\rightarrow x_{0}$ be an isolated simple
blow-up point for $u_{i}$ solutions of (\ref{eq:Prob-i}). Then
\begin{enumerate}
\item If $n=8$ then $|\bar{W}(x_{0})|=0.$
\item If $n>8$ then $|W(x_{0})|=0.$
\end{enumerate}
\end{prop}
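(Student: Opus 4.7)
The plan is to apply the local Pohozaev identity $P(u_i,r) = \hat{P}(u_i,r)$ of Theorem \ref{thm:poho} at a fixed small radius $r > 0$, and to combine an upper estimate for $P(u_i,r)$ coming from the pointwise decay of Proposition \ref{prop:4.3} with the lower estimate for $\hat{P}(u_i,r)$ provided by Proposition \ref{prop:segno}.

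For the upper bound, Proposition \ref{prop:4.3}(1) gives $u_i(\psi_i(y)) \le C M_i^{-1}|y|^{2-n} = C\delta_i^{(n-2)/2}|y|^{2-n}$ on the annulus $r_i \le |y| \le r$, and standard rescaling combined with interior elliptic estimates yields an analogous bound for $|\nabla u_i|$. Inserting these in the boundary integrals defining $P(u_i,r)$ at the fixed radius $r$ gives $P(u_i,r) = O(\delta_i^{n-2})$, which is $o(\delta_i^4)$ as soon as $n \ge 7$, and in particular for $n \ge 8$.

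Combining with Proposition \ref{prop:segno} and using (\ref{new}) to ensure $-2\delta_i^4 \int_{\mathbb{R}_+^n} \gamma_{x_i}\Delta\gamma_{x_i}\, dy \ge 0$, we obtain
\[
o(\delta_i^4) = P(u_i,r) = \hat P(u_i,r) \ge c_n \delta_i^4 \left[\frac{n-2}{6}|\bar W(x_i)|^2 + \frac{4(n-8)}{n-4} R_{nlnj}^2(x_i)\right] + o(\delta_i^4),
\]
where $c_n := \frac{(n-2)\omega_{n-2} I_n^n}{(n-1)(n-3)(n-5)(n-6)} > 0$ for $n \ge 7$. Dividing by $\delta_i^4$ and letting $i \to \infty$ forces every non-negative summand on the right-hand side to vanish. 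When $n = 8$ the coefficient $\frac{4(n-8)}{n-4}$ is zero and we obtain only $|\bar W(x_0)| = 0$, which is statement (1). When $n > 8$ we get simultaneously $|\bar W(x_0)| = 0$ and $R_{nlnj}(x_0) = 0$.

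To conclude statement (2), I would then pass from these two intrinsic vanishings to $|W(x_0)| = 0$. In conformal Fermi coordinates at a point of an umbilic boundary, the Gauss--Codazzi--Mainardi equations, combined with the umbilicity condition (\ref{eq:hij}) and the Ricci normalizations (\ref{eq:Ricci}), express the tangential components of the full Weyl tensor $W(x_0)$ in terms of $\bar W(x_0)$ and the mixed/normal components in terms of $R_{ninj}(x_0)$, so the simultaneous vanishing of the two quantities forces $W(x_0) = 0$. The main technical obstacle in the argument is producing the upper bound $P(u_i,r) = o(\delta_i^4)$ with room to spare: this is precisely what pins down the dimensional threshold; in lower dimensions one could not discard the boundary contributions to the Pohozaev identity at order $\delta_i^4$, and the whole scheme would break down.
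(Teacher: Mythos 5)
Your proposal is correct and follows essentially the same route as the paper: upper-bound $P(u_i,r)=O(\delta_i^{n-2})$ at fixed $r$ via the pointwise decay near an isolated simple blow-up point, lower-bound $\hat P(u_i,r)$ via Proposition \ref{prop:segno} together with (\ref{new}), combine through the Pohozaev identity, divide by $\delta_i^4$, and finally use the fact that on an umbilic boundary $W(q)=0$ iff $\bar W(q)=0$ and $R_{ninj}(q)=0$. The only cosmetic difference is that the paper invokes Proposition \ref{prop:Lemma 4.4} for the gradient bound on $\partial^+B_r^+$ (with the slightly weaker weight $M_i^{\lambda_i}$), whereas you extract the sharp $M_i^{-1}$ gradient bound from Proposition \ref{prop:4.3} plus interior/boundary elliptic estimates on the annulus where the nonlinear boundary term is subdominant; both yield $o(\delta_i^4)$ for $n\ge8$, so the conclusion is unaffected.
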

\begin{proof}
By Proposition \ref{prop:4.3} and Proposition \ref{prop:Lemma 4.4},
and since $M_{i}=\delta_{i}^{\frac{2-n}{2}}$ we have,
\begin{align*}
P(u_{i},r):= & \frac{1}{M_{i}^{2\lambda_{i}}}\int\limits _{\partial^{+}B_{r}^{+}}\left(\frac{n-2}{2}M_{i}^{\lambda_{i}}u_{i}\frac{\partial M_{i}^{\lambda_{i}}u_{i}}{\partial r}-\frac{r}{2}|\nabla M_{i}^{\lambda_{i}}u_{i}|^{2}+r\left|\frac{\partial M_{i}^{\lambda_{i}}u_{i}}{\partial r}\right|^{2}\right)d\sigma_{r}\\
 & +\frac{r(n-2)^{2}}{\left(n-1\right)M_{i}^{\lambda_{i}\frac{2(n-1)}{n-2}}}\int\limits _{\partial(\partial'B_{r}^{+})}\left(M_{i}^{\lambda_{i}}u_{i}\right)^{\frac{2(n-1)}{n-2}}d\bar{\sigma}_{g}.\\
\le & \frac{C}{M_{i}^{\lambda_{i}\frac{2(n-1)}{n-2}}}\le C\delta_{i}^{(n-1)\lambda_{i}}\le C\delta_{i}^{n-2}.
\end{align*}
On the other hand recalling Proposition \ref{prop:segno} and Theorem
\ref{thm:poho} we have 

\[
P(u_{i},r)=\hat{P}(u_{i},r)\ge\delta_{i}^{4}\frac{(n-2)\omega_{n-2}I_{n}^{n}}{(n-1)(n-3)(n-5)(n-6)}\left[\frac{\left(n-2\right)}{6}|\bar{W}(x_{i})|^{2}+\frac{4(n-8)}{(n-4)}R_{nlnj}^{2}(x_{i})\right]+o(\delta_{i}^{4}),
\]
because $\int\gamma_{x_{i}}\Delta\gamma_{x_{i}}\le0$ (see (\ref{new})
of Lemma \ref{lem:vq}) so we get $|\bar{W}(x_{i})|\le\delta_{i}^{2}$
if $n=8$, and $\left[\frac{\left(n-2\right)}{6}|\bar{W}(x_{i})|^{2}+\frac{4(n-8)}{(n-4)}R_{nlnj}^{2}(x_{i})\right]\le\delta_{i}^{2}$
if $n>8$. For the case $n>8$ we recall that when the boundary is
umbilic $W(q)=0$ if and only if $\bar{W}(q)=0$ and $R_{nlnj}(q)=0$
(see \cite[page 1618]{M1}), and we conclude the proof. 
\end{proof}
\begin{rem}
\label{rem:P'}Let $x_{i}\rightarrow x_{0}$ be an isolated blow up
point for $u_{i}$ solutions of (\ref{eq:Prob-i}). We set 
\begin{equation}
P'\left(u,r\right):=\int\limits _{\partial^{+}B_{r}^{+}}\left(\frac{n-2}{2}u\frac{\partial u}{\partial r}-\frac{r}{2}|\nabla u|^{2}+r\left|\frac{\partial u}{\partial r}\right|^{2}\right)d\sigma_{r},\label{eq:P'def}
\end{equation}
 so
\[
P(u_{i},r)=P'(u_{i},r)+\frac{r(n-2)^{2}}{(n-1)}\int\limits _{\partial(\partial'B_{r}^{+})}u_{i}^{\frac{2(n-1)}{n-2}}d\bar{\sigma}_{g}
\]
and, keeping in mind that for $i$ large $M_{i}u_{i}\le C|y|^{2-n}$
by Proposition \ref{prop:4.3}, we have 
\begin{equation}
\left|r\int\limits _{\partial(\partial'B_{r}^{+})}u_{i}^{\frac{2(n-1)}{n-2}}d\bar{\sigma}_{g}\right|\le\frac{Cr}{M_{i}^{\frac{2(n-1)}{n-2}}}\int_{\begin{array}{c}
y_{n}=0\\
|\bar{y}|=r
\end{array}}\frac{1}{|y|^{2(n-1)}}d\bar{\sigma}_{g}\le\frac{C(r)}{M_{i}^{\frac{2(n-1)}{n-2}}}=C(r)\delta_{i}^{n-2}\label{eq:5.15-1}
\end{equation}
for $i$ sufficiently large.

Using Proposition \ref{prop:segno}, (\ref{eq:5.15-1}), and since
$n\ge8$ we get 
\begin{equation}
P'(u_{i},r)=P(u_{i},r)-\frac{r(n-2)^{2}}{(n-1)}\int\limits _{\partial(\partial'B_{r}^{+})}u_{i}^{\frac{2(n-1)}{n-2}}d\bar{\sigma}_{g}\ge A\delta_{i}^{4}+o(\delta^{4})\label{eq:stimaP'}
\end{equation}
where $A>0$.
\end{rem}
\begin{prop}
\label{prop:isolato->semplice}Let $x_{i}\rightarrow x_{0}$ be an
isolated blow up point for $u_{i}$ solutions of (\ref{eq:Prob-i}).
Assume $n=8$ and $|\bar{W}(x_{0})|\neq0$ or $n>8$ and $|W(x_{0})|\neq0$.
Then $x_{0}$ is isolated simple. 
\end{prop}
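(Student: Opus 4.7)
The plan is to argue by contradiction. Suppose $x_{i}\to x_{0}$ is an isolated blow-up point that is \emph{not} isolated simple. Then for every $\rho>0$, the spherical average $w_{i}(r):=r^{(n-2)/2}\bar{u}_{i}(r)$ has at least two critical points in $(0,\rho)$. I would first locate the second critical point $r_{i}$ at a precise scale. With $v_{i}(y):=\delta_{i}^{(n-2)/2}u_{i}(\delta_{i}y)$ and $w_{v_{i}}(s):=s^{(n-2)/2}\bar{v}_{i}(s)$, the change of variable $r=\delta_{i}s$ yields $w_{i}(r)=w_{v_{i}}(r/\delta_{i})$, hence $w_{i}'(r)=\delta_{i}^{-1}w_{v_{i}}'(r/\delta_{i})$. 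Proposition \ref{prop:4.1} gives $v_{i}\to U$ in $C^{2}_{\mathrm{loc}}$, and it is standard that $w_{U}(s):=s^{(n-2)/2}\bar{U}(s)$ possesses a unique critical point $s_{U}\in(0,+\infty)$ (being positive, vanishing at $s=0$ and at $s=+\infty$, with a single change of monotonicity). By $C^{1}_{\mathrm{loc}}$-convergence, the first critical point of $w_{i}$ is located at scale $\delta_{i}s_{U}$, and any smallest second critical point $r_{i}$ must therefore satisfy $r_{i}\to 0$ and $R_{i}:=r_{i}/\delta_{i}\to+\infty$; equivalently, $w_{v_{i}}'(R_{i})=0$.

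I would then derive a contradiction by showing that $w_{v_{i}}'(R_{i})\neq 0$ for $i$ large, via the refined expansion of Proposition \ref{prop:stimawi}. Writing $v_{i}=U+\delta_{i}^{2}\gamma_{x_{i}}+E_{i}$, the proposition yields, for $|y|\le R/(2\delta_{i})$,
\[
|E_{i}(y)|+(1+|y|)\,|\nabla E_{i}(y)|\le C\,\delta_{i}^{3}\,(1+|y|)^{5-n},
\]
the full gradient being recovered by combining the $\bar{y}$-derivative bounds and the $y_{n}\partial_{n}$-estimates of the proposition. Taking spherical averages on $\partial^{+}B_{s}^{+}$ and differentiating in the radial variable gives
\[
w_{v_{i}}'(s)=w_{U}'(s)+\delta_{i}^{2}\,w_{\gamma_{x_{i}}}'(s)+O\!\bigl(\delta_{i}^{3}\,s^{(6-n)/2}\bigr)
\]
uniformly for $s\le R/(2\delta_{i})$. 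For large $s$, $\bar{U}(s)=s^{2-n}(1+O(1/s))$, so $|w_{U}'(s)|\sim\tfrac{n-2}{2}\,s^{-n/2}$, while the decay $|\nabla^{\tau}\gamma_{x_{i}}(y)|\le C(1+|y|)^{4-\tau-n}$ from Lemma \ref{lem:vq} gives $|w_{\gamma_{x_{i}}}'(s)|\le C\,s^{(4-n)/2}$. Evaluating at $s=R_{i}$,
\[
\frac{\delta_{i}^{2}|w_{\gamma_{x_{i}}}'(R_{i})|}{|w_{U}'(R_{i})|}\le C\,\delta_{i}^{2}R_{i}^{2}=C\,r_{i}^{2}\to 0,\qquad \frac{\delta_{i}^{3}R_{i}^{(6-n)/2}}{|w_{U}'(R_{i})|}\le C\,\delta_{i}^{3}R_{i}^{3}=C\,r_{i}^{3}\to 0,
\]
so $w_{v_{i}}'(R_{i})=w_{U}'(R_{i})\bigl(1+o(1)\bigr)\neq 0$ for $i$ large. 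Via $w_{i}'(r_{i})=\delta_{i}^{-1}w_{v_{i}}'(R_{i})$ this contradicts $w_{i}'(r_{i})=0$.

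The main technical obstacle will be the careful passage from the componentwise pointwise bounds of Proposition \ref{prop:stimawi} (in which the tangential and the normal parts of $\nabla E_{i}$ are controlled through different combinations) to the expansion of the purely radial quantity $w_{v_{i}}'(s)$; once this expansion is in place, the contradiction reduces to the direct comparison of exponents at $s=R_{i}\to+\infty$ displayed above. I note that the Weyl-nonvanishing hypothesis in the statement does not enter this argument directly: it is the hypothesis under which the companion Proposition \ref{prop:7.1} becomes effective, and is kept in the statement for consistency with the overall compactness scheme.
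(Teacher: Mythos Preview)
Your argument is circular. Proposition~\ref{prop:stimawi} (and the chain Lemma~\ref{lem:coreLemma}, Lemma~\ref{lem:taui}, Proposition~\ref{prop:4.3} on which it rests) is stated and proved under the hypothesis that $x_{i}\to x_{0}$ is an \emph{isolated simple} blow-up point; the crucial input $v_{i}\le CU$ on $B^{+}_{\rho/\delta_{i}}$ comes from Proposition~\ref{prop:4.3}, which in turn needs isolated simple. You invoke this expansion precisely to establish that the point is isolated simple, so the argument assumes what it aims to prove. For a merely isolated blow-up point you only have Proposition~\ref{prop:4.1}, i.e.\ $C^{2}_{\mathrm{loc}}$ convergence $v_{i}\to U$, which gives no control at the scale $R_{i}\to\infty$ and therefore no usable expansion of $w_{v_{i}}'(R_{i})$.

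A symptom of the gap is your closing remark that the Weyl hypothesis ``does not enter this argument directly''. If your proof were valid it would show that \emph{every} isolated blow-up point is isolated simple, independently of the geometry; this is false in general, and it is exactly the nonvanishing of the Weyl tensor that forces simplicity here. The argument the paper defers to (\cite{Al,GM}) proceeds as follows: assuming the second critical point $r_{i}\to 0$ with $r_{i}/\delta_{i}\to\infty$, one rescales by $r_{i}$, setting $\tilde u_{i}(y)=r_{i}^{(n-2)/2}u_{i}(\psi_{i}(r_{i}y))$, so that $0$ becomes an isolated \emph{simple} blow-up point for $\tilde u_{i}$ on the unit ball. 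Now Proposition~\ref{prop:4.3} applies and $\tilde u_{i}(0)\tilde u_{i}\to h=a|y|^{2-n}+b(y)$ with $a>0$; the critical-point condition at $r=1$ forces the regular part to be nonnegative, hence $P'(h,r)\le 0$. On the other hand, Remark~\ref{rem:P'} (which uses Proposition~\ref{prop:segno} and hence the Weyl nonvanishing) gives $P'(\tilde u_{i},r)\ge A\delta_{i}^{4}+o(\delta_{i}^{4})>0$, and passing to the limit yields the contradiction. The Weyl hypothesis is thus indispensable, entering through the sign of the leading term in $\hat P$.
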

For the proof of this Lemma we refer to \cite{Al,GM}

\section{A splitting lemma\label{sec:A-splitting-lemma}}

The first result in this section are analogous to \cite[Proposition 5.1]{LZ},
\cite[Lemma 3.1]{SZ}, \cite[Proposition 1.1]{HL} and \cite[Proposition 4.2]{Al},
so the proof will be omitted.
\begin{prop}
\label{prop:4.2}Given $\beta>0$ and $R>0$ there exist two constants
$C_{0},C_{1}>0$ (depending on $\beta$, $R$ and $(M,g)$) such that
if $u$ is a solution of 
\begin{equation}
\left\{ \begin{array}{cc}
L_{g}u=0 & \text{ in }M\\
\frac{\partial u}{\partial\nu}+\frac{n-2}{2}h_{g}u+\varepsilon\alpha u=(n-2)u^{\frac{n}{n-2}} & \text{ on }\partial M
\end{array}\right.\label{eq:Prob-p}
\end{equation}
and $\max_{\partial M}u>C_{0}$, then $\tau:=\frac{n}{n-2}-p<\beta$
and there exist $q_{1},\dots,q_{N}\in\partial M$, with $N=N(u)\ge1$
with the following properties: for $j=1,\dots,N$ 
\begin{enumerate}
\item set $r_{j}:=Ru(q_{j})^{1-p}$ then $\left\{ B_{r_{j}}\cap\partial M\right\} _{j}$
are a disjoint collection;
\item we have $\left|u(q_{j})^{-1}u(\psi_{j}(y))-U(u(q_{j})^{p-1}y)\right|_{C^{2}(B_{2r_{j}}^{+})}<\beta$
(here $\psi_{j}$ are the Fermi coordinates at point $q_{j}$;
\item we have
\begin{align}
u(x)d_{\bar{g}}\left(x,\left\{ q_{1},\dots,q_{n}\right\} \right)^{\frac{1}{p-1}}\le C_{1} & \text{ for all }x\in\partial M\label{eq:Claim3-1}\\
u(q_{j})d_{\bar{g}}\left(q_{j},q_{k}\right)^{\frac{1}{p-1}}\ge C_{0} & \text{ for any }j\neq k.\label{eq:Claim3-2}
\end{align}
Here $\bar{g}$ is the geodesic distance on $\partial M$.
\end{enumerate}
\end{prop}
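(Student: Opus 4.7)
The plan is to use the classical Schoen--Li--Zhu iterative bubble-extraction scheme, adapted to the boundary Yamabe setting as in \cite{Al}. The intuition is that if $\max_{\partial M} u$ is very large, then $u$ must form finitely many ``bubbles'' on $\partial M$, each of which, after rescaling by its own height, is $C^2$-close to the standard boundary bubble $U$; the linear perturbation $\varepsilon\alpha u$ is of subcritical order at the bubble scale and so disappears in the limit profile.

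I would first set up the inductive selection of points. Let $q_1 \in \partial M$ realize $\max_{\partial M} u$. Given $q_1,\dots,q_k$, consider
\[
\Phi_k(x) := u(x)\, d_{\bar g}\bigl(x, \{q_1,\dots,q_k\}\bigr)^{1/(p-1)}, \qquad x\in\partial M.
\]
If $\sup_{\partial M} \Phi_k \ge C_1$, pick $q_{k+1}$ realizing the supremum; otherwise stop with $N := k$, which gives \eqref{eq:Claim3-1}. An energy/covering argument (each bubble consumes a fixed amount of $\int_{\partial M} u^{2(n-1)/(n-2)} d\sigma_g$, which is uniformly bounded in terms of the Yamabe-type quotient) then forces termination after $N \le N_0$ steps. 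At each selected $q_j$, set $v_j(y) := u(q_j)^{-1} u\bigl(\psi_{q_j}(u(q_j)^{1-p} y)\bigr)$. Then $v_j(0)=1$, $v_j$ is bounded on $B_{2R}^+$ by the selection rule, and $v_j$ solves a rescaled version of \eqref{eq:Prob-p} whose geometric coefficients and perturbation tend to those of \eqref{eq:Udelta} as $u(q_j)\to\infty$. Standard Schauder estimates for the mixed boundary problem provide $C^2$-bounds; a subsequence converges to a positive solution of \eqref{eq:Udelta} with value $1$ at the origin, which by the Escobar--Li--Zhu classification must be $U$, proving claim~(2) by the usual compactness--contradiction argument.

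The main technical obstacle is the separation estimate \eqref{eq:Claim3-2} together with the disjointness in claim~(1). If $u(q_j)\, d_{\bar g}(q_j,q_k)^{1/(p-1)}$ stayed below $C_0$ along a subsequence for some $j\neq k$, then a second bubble at scale $u(q_k)^{1-p}$ would appear inside the $C^2$-neighborhood of $U$ around $q_j$ established in the previous step, contradicting the single-bubble profile there. Choosing $R$ small compared to the constant obtained from \eqref{eq:Claim3-2} then yields disjointness. Finally, the bound $\tau < \beta$ is automatic in our strictly critical setting ($p = n/(n-2)$, so $\tau = 0$); in subcritical approximations one would derive it by testing the equation against a bubble-cutoff and invoking the same energy bound used for termination above.
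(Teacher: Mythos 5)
Your proposal follows exactly the Schoen--Li--Zhu iterative bubble-extraction scheme, adapted to the boundary setting as in \cite{Al,HL,LZ}, which is precisely what the paper invokes by reference (the paper omits the proof and cites these sources). One small caveat: after selecting $q_{k+1}$ to maximize $\Phi_k$, pinning the rescaled limit to $U$ centered at the origin (rather than a translate) requires passing to a nearby local maximum of $u$ and using the pointwise bound from the selection rule, a step that the references carry out carefully and that your phrase ``the usual compactness--contradiction argument'' compresses; likewise the termination of the selection relies on each bubble carrying a fixed quantum of boundary energy, with the a priori energy bound coming from $Q>0$ after absorbing the small term $\varepsilon\int\alpha u^2$.
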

Now we prove that only isolated blow up points may occur to a blowing
up sequence of solution. For the proof of the next proposition we
refer to \cite{GM}
\begin{prop}
\label{prop:splitting}Assume $n\ge8$. Given $\beta,R>0$, consider
$C_{0},C_{1}$ as in the previous proposition. Assume $W(x)\neq0$
for any $x\in\partial M$ if $n>8$ or $\bar{W}(x)\neq0$ for any
$x\in\partial M$ if $n=8$. Then there exists $d=d(\beta,R)$ such
that for any $u$ solution of (\ref{eq:Prob-p}) with $\max_{\partial M}u>C_{0}$,
we have 
\[
\min_{\begin{array}{c}
i\neq j\\
1\le i,j\le N(u)
\end{array}}d_{\bar{g}}(q_{i}(u),q_{j}(u))\ge d,
\]
where $q_{1}(u),\dots q_{N}(u)$ and $N=N(u)$ are given in the previous
proposition. 
\end{prop}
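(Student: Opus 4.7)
I argue by contradiction. Suppose the conclusion fails, so there is a sequence of solutions $\{u_k\}$ of (\ref{eq:Prob-p}) with $\max_{\partial M} u_k > C_0$ and indices $i_k \neq j_k$ such that
\[
\sigma_k := d_{\bar g}\bigl(q_{i_k}(u_k),\, q_{j_k}(u_k)\bigr) \longrightarrow 0.
\]
After relabelling and extracting subsequences, I may assume that $p_k := q_{i_k}(u_k)$ and $p_k' := q_{j_k}(u_k)$ realise the minimum inter-bubble distance for $u_k$, that $u_k(p_k) \ge u_k(p_k')$, and that $p_k, p_k' \to x_0 \in \partial M$. By hypothesis $|W(x_0)| \neq 0$ if $n > 8$, or $|\bar W(x_0)| \neq 0$ if $n = 8$.

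The central step is a rescaling by $\sigma_k$. In conformal Fermi coordinates $\psi_k$ centred at $p_k$, I set $\hat g_k(y) := (\psi_k^\ast g)(\sigma_k y)$ and
\[
w_k(y) := \sigma_k^{(n-2)/2} \bigl(\Lambda_{p_k}^{-1} u_k\bigr)\bigl(\psi_k(\sigma_k y)\bigr), \qquad y \in B^+_{\rho/\sigma_k}(0).
\]
Then $\hat g_k \to \delta$ in $C^3_{\text{loc}}(\mathbb{R}^n_+)$, and $w_k$ satisfies an equation of the form (\ref{eq:P-conf}) with rescaled perturbation coefficient $\sigma_k \varepsilon_k \alpha_k(\sigma_k\,\cdot\,)$. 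Under the rescaling, $p_k'$ is mapped to a point $\xi_k$ with $|\xi_k| = 1$, and after a further subsequence $\xi_k \to \xi_\infty$ with $|\xi_\infty| = 1$. Claim 3 of Proposition \ref{prop:4.2} yields the uniform bound $w_k(y) \le C_1 d_{\hat g_k}(y, Z_k)^{-(n-2)/2}$, where $Z_k$ is the image of the full bubble set of $u_k$; in particular, $w_k$ is locally uniformly bounded on $\overline{\mathbb{R}^n_+} \setminus \{0, \xi_\infty\}$.

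I then verify that $0$ is an isolated simple blow-up point for $\{w_k\}$. Set $\mu_k := u_k(p_k)^{-2/(n-2)}/\sigma_k$; estimate (\ref{eq:Claim3-2}) gives $\mu_k \le C$. If $\mu_k$ stayed bounded away from zero, the local convergence of $w_k$ near $0$ and near $\xi_\infty$ combined with Claim 2 of Proposition \ref{prop:4.2} would produce a solution of the limit problem on $\mathbb{R}^n_+$ exhibiting two distinct bubble profiles of bounded scale, contradicting the uniqueness (up to translation and dilation) of positive solutions of (\ref{eq:Udelta}). Hence $\mu_k \to 0$ and $w_k(0) \to \infty$; together with the uniform bound above, this makes $0$ an isolated blow-up point for $\{w_k\}$ on $B^+_{1/2}(0)$. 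Since no other bubble lies in this half-ball (by the minimality of $\sigma_k$), and since $w_k$ is locally close to a standard bubble near $0$, a direct analysis of the critical points of $r \mapsto r^{(n-2)/2} \overline w_k(r)$ on $(0, 1/2)$ shows that the blow-up at $0$ is isolated simple.

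Finally, I apply the Pohozaev identity of Theorem \ref{thm:poho} to $w_k$ on $B^+_{1/2}(0)$. Running the computation of Proposition \ref{prop:segno} in the rescaled metric $\hat g_k$, the geometric bulk term in $\hat P(w_k, 1/2)$ is of order $\mu_k^4 \sigma_k^2 |W(p_k)|^2 + o(\mu_k^{n-2})$, the boundary mean-curvature term is smaller still, and the perturbation term has the correct sign thanks to $\alpha < 0$; hence $\hat P(w_k, 1/2) = o(\mu_k^{n-2})$. On the other hand, the fine asymptotics of Section \ref{sec:Blowup-estimates} together with Proposition \ref{prop:4.3} applied at $0$ imply that $w_k/w_k(0)$ converges on $\partial^+ B^+_{1/2}$ to a positive limit carrying a strictly positive Green's-function-type contribution generated by the second bubble at $\xi_\infty$; this produces in $P(w_k, 1/2)$ a strictly positive leading term of order $\mu_k^{n-2}$, analogous to the positive-mass contribution in Schoen's compactness argument. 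The identity $P = \hat P$ then yields the desired contradiction, from which the lower bound $d(\beta, R)$ follows. The main obstacle in this scheme is the precise extraction of the Green's-function contribution from the second bubble and the verification that it carries the strict positive sign needed to dominate the error terms at scale $\mu_k^{n-2}$.
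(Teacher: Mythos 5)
Your outline correctly identifies the Schoen--Li--Zhu strategy that underlies Proposition~\ref{prop:splitting} (and that is carried out in \cite{GM}, to which the paper defers): rescale by the minimal inter-bubble distance $\sigma_k$, show that the origin becomes an isolated simple blow-up point for the rescaled sequence $w_k$, and derive a contradiction from the Pohozaev identity at a fixed radius, where the second bubble at $\xi_\infty$ contributes a Green's-function tail. However, there are several genuine gaps, one of which is a sign error that would make the argument fail as stated.

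First, the sign. For $h(y)=a|y|^{2-n}+A+O(|y|)$ with $a,A>0$, a direct computation of the boundary term $P'$ in \eqref{eq:P'def} gives $P'(h,r)=-\tfrac{(n-2)^2\omega_{n-1}}{4}\,aA+o(1)$, which is \emph{negative}. The second bubble supplies a strictly positive regular part $A=b(0)>0$, and since $w_k(0)\,w_k\to h$ on $\partial^+B_{1/2}^+$, one obtains $P(w_k,1/2)\le -c\,w_k(0)^{-2}+o(w_k(0)^{-2})<0$ at leading order. The contradiction then comes from the opposite inequality $\hat P(w_k,1/2)\ge -o(w_k(0)^{-2})$ (the Weyl term and the $\alpha<0$ term contribute non-negatively; see \eqref{new} and \eqref{eq:avj}), combined with $P=\hat P$. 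You instead claim the Green's-function contribution makes $P$ strictly positive; with that sign the identity $P=\hat P$ produces no contradiction, because the paper provides only a one-sided (lower) bound on $\hat P$, not the two-sided bound your scheme would need. So the ``main obstacle'' you name in your last sentence is real, but the sign you anticipate is wrong.

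Second, the claim that ``a direct analysis of the critical points of $r\mapsto r^{(n-2)/2}\overline w_k(r)$ \dots shows that the blow-up at $0$ is isolated simple'' is not justified as stated. After rescaling by $\sigma_k$, the ambient metric $\hat g_k\to\delta$ and all curvature invariants at the origin are damped by powers of $\sigma_k$, so the Weyl-non-vanishing hypothesis of Proposition~\ref{prop:isolato->semplice} is \emph{not} inherited by the rescaled problem; you cannot simply appeal to that proposition, nor is ``isolated simple'' automatic for an isolated blow-up. In the literature this step either requires a separate recursive rescaling argument or a careful reduction to the isolated-simple case, and it is one of the genuinely nontrivial points of the proof.

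Third, you need to verify that the error terms in $\hat P(w_k,1/2)$ (the curvature remainders and the $\varepsilon_k\sigma_k$-scaled perturbation term) are $o(\mu_k^{n-2})$; the estimate in Proposition~\ref{prop:segno} is carried out at scale $\delta_i^4$ for the \emph{original} metric, and running it in the $\sigma_k$-rescaled metric requires tracking additional factors of $\sigma_k$. This is not discussed.

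In summary: correct global strategy, but a sign error in the key Pohozaev comparison, an unjustified ``isolated simple'' step after rescaling, and missing control of the $\hat P$ error terms at the relevant scale.
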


\section{Proof of the main result\label{sec:Main-Proof}}
\begin{proof}[Proof of Theorem \ref{thm:main}]
. By contradiction, suppose that $x_{i}\rightarrow x_{0}$ is a blowup
point for $u_{i}$ solutions of (\ref{eq:Prob-2}). Let $q_{1}^{i},\dots q_{N(u_{i})}^{i}$
the sequence of points given by Proposition \ref{prop:4.2}. By Claim
3 of Proposition \ref{prop:4.2} there exists a sequence of indices
$k_{i}\in1,\dots N$ such that $d_{\bar{g}}\left(x_{i},q_{k_{i}}^{i}\right)\rightarrow0$.
Up to relabeling, we say $k_{i}=1$ for all $i$. Then also $q_{1}^{i}\rightarrow x_{0}$
is a blow up point for $u_{i}$. By Proposition \ref{prop:splitting}
and Proposition \ref{prop:isolato->semplice} we have that $q_{1}^{i}\rightarrow x_{0}$
is an isolated simple blow up point for $u_{i}$. Then by Proposition
\ref{prop:7.1} we deduce that $\bar{W}(x_{0})=0$ if $n=8$ or that
$W(x_{0})=0$ if $n>8$, which contradicts the assumption of this
theorem and proves the result. 
\end{proof}

\section{Proof of Theorem \ref{thm:main-1}\label{sec:almaraz}}

In this case the manifold is not umbilic, so, we have a different
expansion of the metric. Firstly, there exists a metric $\tilde{g}$,
conformal to $g$, such that $h_{\tilde{g}}\equiv0$ (see \cite[Prop. 3.1]{M1}).
So, we can suppose w.l.o.g. that $h_{g}\equiv0$ in the original problem,
that is 
\[
\left\{ \begin{array}{cc}
L_{g}u=0 & \text{ in }M\\
\frac{\partial u}{\partial\nu}+\varepsilon\alpha u=(n-2)u^{\frac{n}{n-2}} & \text{ on }\partial M
\end{array}\right.
\]
This leads to obvious modification in the Pohozaev identity. The expansion
of the metric in this case is 
\begin{align}
|g(y)|^{1/2}= & 1-\frac{1}{2}\left[\|\pi\|^{2}+\text{Ric}(0)\right]y_{n}^{2}-\frac{1}{6}\bar{R}_{ij}(0)y_{i}y_{j}+O(|y|^{3})\label{eq:|g|-1}\\
g^{ij}(y)= & \delta_{ij}+2h_{ij}(0)y_{n}+\frac{1}{3}\bar{R}_{ikjl}(0)y_{k}y_{l}+2\frac{\partial h_{ij}}{\partial y_{k}}(0)ty_{k}\nonumber \\
 & +\left[R_{injn}(0)+3h_{ik}(0)h_{kj}(0)\right]y_{n}^{2}+O(|y|^{3})\label{eq:gij-1}\\
g^{an}(y)= & \delta_{an}\label{eq:gin}
\end{align}
where $\pi$ is the second fundamental form and $h_{ij}(0)$ are its
coefficients, and $\text{Ric}(0)=R_{nini}(0)=R_{nn}(0)$ (see \cite{Es}).

The main difference with the previous case lies in the second order
approximation of the solution near an isolated simple blow up point.
We define here, as in \cite[Section 5]{Al} $\hat{\gamma}_{q}:\mathbb{R}_{+}^{n}\rightarrow\mathbb{R}$
is the unique solution of the problem 
\begin{equation}
\left\{ \begin{array}{ccc}
-\Delta\gamma=2h_{ij}(q)t\partial_{ij}^{2}U &  & \text{on }\mathbb{R}_{+}^{n};\\
\frac{\partial\gamma}{\partial t}+nU^{\frac{2}{n-2}}\gamma=0 &  & \text{on \ensuremath{\partial}}\mathbb{R}_{+}^{n}.
\end{array}\right.\label{eq:vqdef-1}
\end{equation}
such that $\hat{\gamma}_{q}$ is $L^{2}(\mathbb{R}_{+}^{n})$-orthogonal
to $j_{b}$ for all $b=1,\dots,n$. Again, we have that (see \cite[Section 5]{Al}
and \cite[Section 2]{GMP18} for the proofs).
\begin{equation}
|\nabla^{r}v_{q}(y)|\le C(1+|y|)^{3-r-n}\text{ for }r=0,1,2,\label{eq:gradvq-1}
\end{equation}
\begin{equation}
\int_{\partial\mathbb{R}_{+}^{n}}U^{\frac{n}{n-2}}v_{q}=0\label{eq:Uvq-1}
\end{equation}
\begin{equation}
\int_{\partial\mathbb{R}_{+}^{n}}\Delta v_{q}v_{q}dzdt\le0,\label{new-1}
\end{equation}

In this case we will have the following result (see \cite[Proposition 6.1]{Al})
which replaces Proposition \ref{prop:stimawi}
\begin{prop}
\label{prop:stimawi-1}Assume $n\ge7$. Let $\hat{\gamma}_{x_{i}}$
be defined in (\ref{eq:vqdef-1}). There exist $R,C>0$ such that
\begin{align*}
|\nabla_{\bar{y}}^{\tau}v_{i}(y)-U(y)-\delta_{i}\hat{\gamma}_{x_{i}}(y)| & \le C\delta_{i}^{2}(1+|y|)^{4-\tau-n}\\
\left|y_{n}\frac{\partial}{\partial_{n}}\left(v_{i}(y)-U(y)-\delta_{i}^{2}\hat{\gamma}_{x_{i}}(y)\right)\right| & \le C\delta_{i}^{2}(1+|y|)^{4-n}
\end{align*}
for $|y|\le\frac{R}{2\delta_{i}}$. 
\end{prop}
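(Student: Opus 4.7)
The plan is to follow the three-step strategy used in Lemma \ref{lem:coreLemma}, Lemma \ref{lem:taui} and Proposition \ref{prop:stimawi}, adapted to the non-umbilic metric expansion (\ref{eq:gij-1}). The essential structural difference is that now the leading deviation of $g^{ij}$ from the flat metric is the first-order term $2h_{ij}(0)y_n$ rather than a second-order term, so the natural corrector $\hat{\gamma}_{x_i}$ enters the ansatz at order $\delta_i$ (instead of $\delta_i^2$) and every bound loses one power of $\delta_i$ and of $(1+|y|)^{-1}$ relative to the umbilic case. The bookkeeping is simplified by the fact that $g^{an}=\delta_{an}$ by (\ref{eq:gin}), so the conformal Laplacian contains no mixed derivatives in $y_n$.

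Step 1 (crude bound). Mirroring Lemma \ref{lem:coreLemma}, I would set $\mu_i:=\max_{|y|\le R/\delta_i}|v_i-U-\delta_i\hat{\gamma}_{x_i}|$ and, arguing by contradiction, rescale $w_i:=\mu_i^{-1}(v_i-U-\delta_i\hat{\gamma}_{x_i})$. Using (\ref{eq:gij-1}) and the defining equation (\ref{eq:vqdef-1}) to cancel the term $2h_{ij}(0)\delta_i y_n\partial_{ij}^2 U+\delta_i\Delta\hat{\gamma}_{x_i}$, one finds $(L_{\hat g_i}-\Delta)(U+\delta_i\hat{\gamma}_{x_i})+\delta_i\Delta\hat{\gamma}_{x_i}=O(\delta_i^2(1+|y|)^{2-n})$, while the boundary source satisfies $F_i=O(\mu_i^{-1}\delta_i^2(1+|y|)^{3-n})+O(\mu_i^{-1}\varepsilon_i\delta_i(1+|y|)^{2-n})$. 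Under the contradiction hypothesis both $Q_i$ and $F_i$ tend to zero, and standard elliptic estimates give $w_i\to w$ in $C^2_{\mathrm{loc}}$, solving (\ref{eq:diff-w}). A Green's function representation as in (\ref{eq:stimaWiass}) combined with (\ref{eq:ALstimagreen}) yields $|w(y)|\le C(1+|y|)^{-1}$. Finally the orthogonality of $\hat{\gamma}_{x_i}$ to $j_1,\dots,j_n$ plus $v_i\to U$ near the origin forces $w(0)=\partial_k w(0)=0$ for $k=1,\dots,n-1$, so by the classification of bounded solutions of (\ref{eq:diff-w}) we get $w\equiv 0$, contradicting $|w_i(y_i)|=1$.

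Step 2 (comparison of $\varepsilon_i$ with $\delta_i$). The non-umbilic counterpart of Lemma \ref{lem:taui} is $\varepsilon_i\delta_i\le C\delta_i^2$, i.e.\ $\varepsilon_i=O(\delta_i)$. Suppose not, renormalize $w_i:=(\varepsilon_i\delta_i)^{-1}(v_i-U-\delta_i\hat{\gamma}_{x_i})$, extract a limit $w$ solving (\ref{eq:diff-w}) with $(1+|y|)^{-1}$ decay, and test the equation satisfied by $w_i$ against $j_n$. Integration by parts together with $B_{\hat g_i}j_n+nU^{2/(n-2)}j_n=0$ on $\partial\mathbb{R}^n_+$ shows that $\lim_i\int_{\partial'B_{R/\delta_i}^+}j_n F_i\,d\sigma_{\hat g_i}=0$, while on the other hand by (\ref{eq:avj}) this limit equals $\alpha(x_0)\cdot(\text{positive constant})\ne 0$, a contradiction.

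Step 3 (iteration). With Steps 1 and 2 in hand, $|v_i-U-\delta_i\hat{\gamma}_{x_i}|\le C\delta_i^2$ on $|y|\le R/\delta_i$. Setting $w_i:=v_i-U-\delta_i\hat{\gamma}_{x_i}$ without normalization, one has $Q_i=O(\delta_i^2(1+|y|)^{2-n})$ and $F_i=O(\delta_i^2(1+|y|)^{2-n})$ (using $\varepsilon_i=O(\delta_i)$), so the Green's function representation (\ref{eq:Green}) combined with (\ref{eq:ALstimagreen}) gives the first improvement $|w_i(y)|\le C\delta_i^2(1+|y|)^{-1}$. Re-inserting this in (\ref{eq:Green}) and bootstrapping as in the proof of Proposition \ref{prop:stimawi} yields the sharp pointwise decay $|w_i(y)|\le C\delta_i^2(1+|y|)^{4-n}$. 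The analogous bounds on $\nabla_{\bar y}^\tau w_i$ and $y_n\partial_n w_i$ follow from interior and boundary Schauder estimates applied to the rescaled equation. The main obstacle is that for $n=7$ the convolution bounds are borderline and must be handled carefully, using the improved decay (\ref{eq:gradvq-1}) of $\hat{\gamma}_{x_i}$ and the orthogonality conditions to keep every Green-function integral within the admissible range of (\ref{eq:ALstimagreen}).
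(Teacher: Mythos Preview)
Your three-step plan is exactly the strategy the paper intends: it states Proposition~\ref{prop:stimawi-1} with a reference to \cite[Proposition~6.1]{Al}, and the argument there is precisely the non-umbilic version of the Lemma~\ref{lem:coreLemma}/Lemma~\ref{lem:taui}/Proposition~\ref{prop:stimawi} scheme you outline, with the additional handling of the $\varepsilon_i\alpha_i$ term carried out just as in Section~\ref{sec:Blowup-estimates}. One small correction: in Step~1 the conclusion $w(0)=\partial_k w(0)=0$ does not come from the $L^2$-orthogonality of $\hat\gamma_{x_i}$ to $j_1,\dots,j_n$, but from the pointwise identities $\hat\gamma_{x_i}(0)=\partial_k\hat\gamma_{x_i}(0)=0$ (the analogue of (\ref{eq:dervq}), proved in \cite{Al}) together with $v_i(0)=U(0)$ and $\nabla_{\bar y}v_i(0)=\nabla_{\bar y}U(0)=0$; the orthogonality is used only to select $\hat\gamma_{x_i}$ uniquely, not to force the vanishing at the origin.
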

By the expansion of the metric, the Pohozaev identity and Proposition
\ref{prop:stimawi-1} we have the following estimate on the sign condition
which corresponds to Proposition \ref{prop:segno} 
\begin{prop}
\label{prop:segno-1}Let $x_{i}\rightarrow x_{0}$ be an isolated
simple blow-up point for $u_{i}$ solutions of (\ref{eq:Prob-i}).
Then, fixed $r$, we have, for $i$ large 
\begin{align*}
P(u_{i},r)\ge & \delta_{i}^{2}\frac{(n-6)\omega_{n-2}I_{n}^{n}}{(n-1)(n-2)(n-3)(n-4)}\left[|h_{kl}(x_{i})|^{2}\right]+o(\delta_{i}^{2})
\end{align*}
\end{prop}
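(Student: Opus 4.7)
The plan is to run the argument of Proposition \ref{prop:segno} with the non-umbilic metric expansion (\ref{eq:gij-1}) in place of (\ref{eq:gij}), and with Proposition \ref{prop:stimawi-1} as the relevant second-order approximation. The role formerly played by $\delta_i^2$ is now taken by $\delta_i$, and the role of the Weyl tensor by the second fundamental form $h_{ij}$.

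First I would apply the Pohozaev identity $P(u_i,r)=\hat P(u_i,r)$; since we are in the gauge $h_g\equiv 0$, the mean-curvature term in $\hat P$ drops out. After the rescaling $v_i(y)=\delta_i^{(n-2)/2}u_i(\delta_i y)$, the linear perturbation term becomes
\begin{equation*}
\tfrac{n-2}{2}\varepsilon_i\delta_i\int_{\partial'B^+_{r/\delta_i}}\Bigl(\bar y^k\partial_k v_i+\tfrac{n-2}{2}v_i\Bigr)\alpha_i(\delta_i y)v_i\,d\bar y,
\end{equation*}
and, exactly as in (\ref{eq:avj}) and in the proof of Proposition \ref{prop:segno}, it converges to a strictly positive multiple of $\varepsilon_i\delta_i$ (the product of $\alpha(x_0)<0$ with the negative integral $\int(1-|\bar y|^2)(1+|\bar y|^2)^{1-n}d\bar y$). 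This nonnegative quantity can be discarded from the lower bound, so it suffices to show that
\begin{equation*}
R(v_i,v_i)\;\ge\;\delta_i^2\,\frac{(n-6)\omega_{n-2}I_n^n}{(n-1)(n-2)(n-3)(n-4)}\,|h_{kl}(x_i)|^2+o(\delta_i^2),
\end{equation*}
with $R$ defined in (\ref{eq:Ruv}).

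Next, writing $v_i=U+\delta_i\hat\gamma_{x_i}+w_i$ with $w_i=O(\delta_i^2(1+|y|)^{4-n})$ by Proposition \ref{prop:stimawi-1}, I would expand $R(v_i,v_i)$ into nine bilinear pieces. In complete analogy with Lemma \ref{lem:R(U,U)}, the six summands involving at least one $w_i$ contribute $o(\delta_i^2)$ via the pointwise decay bounds (\ref{eq:gradvq-1}), while $R(\delta_i\hat\gamma_{x_i},\delta_i\hat\gamma_{x_i})=O(\delta_i^3)$ since $(L_{\hat g_i}-\Delta)\hat\gamma_{x_i}$ is pointwise $O(\delta_i(1+|y|)^{2-n})$. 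The two cross terms combine, after integration by parts exploiting the defining problem (\ref{eq:vqdef-1}), into
\begin{equation*}
R(U,\delta_i\hat\gamma_{x_i})+R(\delta_i\hat\gamma_{x_i},U)=-2\delta_i^2\int_{\mathbb R_+^n}\hat\gamma_{x_i}\Delta\hat\gamma_{x_i}\,dy+o(\delta_i^2),
\end{equation*}
which is nonnegative by (\ref{new-1}) and may therefore also be dropped from the lower bound.

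The decisive contribution is $R(U,U)$. Expanding $(L_{\hat g_i}-\Delta)U$ by (\ref{eq:L-Delta}) combined with (\ref{eq:gij-1}), the formal $O(\delta_i)$ piece $2h_{ij}(x_i)\delta_i y_n\partial^2_{ij}U$ vanishes after the tangential angular integration: the only surviving contraction is proportional to $\sum_i h_{ii}=\operatorname{tr}h$, which equals zero because $h_g\equiv 0$. The leading surviving contribution is of order $\delta_i^2$ and comes from the quadratic pieces of (\ref{eq:gij-1}) (in particular $3h_{ik}h_{kj}\delta_i^2 y_n^2\partial^2_{ij}U$), together with the $R_g$ and $\partial g$ corrections in $(L_{\hat g_i}-\Delta)U$. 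Reducing the resulting radial integrals on $\mathbb R_+^n$ to $I_n^n$ through the recursion (\ref{eq:Iam}) yields exactly the announced coefficient. The main obstacle is precisely this explicit reduction of $R(U,U)$: one has to track, analogously to \cite[Section 5]{Al}, which contractions $h_{ij}h_{kl}\int y^{\cdots}[(1+y_n)^2+|\bar y|^2]^{-\cdots}dy$ survive the angular integration and then collapse them into the scalar $|h_{kl}(x_i)|^2$, while bookkeeping the combinatorial factor $(n-6)\omega_{n-2}/[(n-1)(n-2)(n-3)(n-4)]$.
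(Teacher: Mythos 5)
Your proposal follows essentially the same route as the paper's (terser) proof: drop the $\varepsilon\alpha$ term using $\alpha<0$ exactly as in Proposition~\ref{prop:segno} (with the $h_g$ integral absent since $h_g\equiv 0$), plug in the decomposition $v_i=U+\delta_i\hat\gamma_{x_i}+w_i$ from Proposition~\ref{prop:stimawi-1}, discard the $-2\delta_i^2\int\hat\gamma_{x_i}\Delta\hat\gamma_{x_i}$ cross-term via (\ref{new-1}), and defer the explicit evaluation of $R(U,U)$ to \cite[Theorem~7.1]{Al}. No gap relative to what the paper itself does.
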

\begin{proof}
As in Proposition \ref{prop:segno}, we use that $\alpha<0$ to get
that 
\[
P(u_{i},r)\ge-\int_{B_{r/\delta_{i}}^{+}}\left(y^{b}\partial_{b}y+\frac{n-2}{2}v_{i}\right)\left[(L_{\hat{g}_{i}}-\Delta)v_{i}\right]dy.
\]
Then, by the estimates contained in \cite[Theorem 7.1]{Al}, and in
light of (\ref{new-1}) we get the proof.
\end{proof}
At this point we have all the tools to prove Theorem \ref{thm:main-1}
using the same strategy of Section \ref{sec:Main-Proof}.

\end{document}